\let\@fnsymbol\@arabic
\newtheorem{theorem}{Theorem}\numberwithin{theorem}{section}
\newtheorem{definition}[theorem]{Definition}
\newtheorem*{definition*}{Definition}
\newtheorem*{proposition*}{Proposition}
\newtheorem{lemma}[theorem]{Lemma}
\newtheorem{corollary}[theorem]{Corollary}
\newtheorem{proposition}[theorem]{Proposition}
\newtheorem{notation}[theorem]{Notation}
\newtheorem{question}[theorem]{Question}
\newtheorem{theoremm}{Theorem}\numberwithin{theoremm}{subsection}
\numberwithin{theoremmm}{subsubsection}
\theoremstyle{remark}
\newtheorem{example}[theorem]{Example}
\newtheorem*{example*}{Example}
\newcommand{\lcm}{\operatorname{lcm}}
\newcommand{\ord}{\operatorname{ord}}
\newcommand{\Sym}{\operatorname{Sym}}
\newcommand{\id}{\operatorname{id}}
\newcommand{\Mod}[1]{\ (\textup{mod}\ #1)}
\newcommand{\IN}{\mathbb{N}}
\newcommand{\IF}{\mathbb{F}}
\newcommand{\IZ}{\mathbb{Z}}
\newcommand{\CT}{\operatorname{CT}}
\newcommand{\supp}{\operatorname{supp}}
\newcommand{\Mcal}{\mathcal{M}}
\newcommand{\Ccal}{\mathcal{C}}
\newcommand{\codom}{\operatorname{codom}}
\newcommand{\ran}{\operatorname{ran}}
\newcommand{\init}{\mathrm{init}}
\newcommand{\term}{\mathrm{term}}
\newcommand{\IC}{\mathbb{C}}
\newcommand{\Null}{\operatorname{Null}}
\newcommand{\IQ}{\mathbb{Q}}
\newcommand{\reg}{\mathrm{reg}}
\begin{document}

\title{Cycle types of complete mappings of finite fields}

\author{Alexander Bors\textsuperscript{1} \and Qiang Wang\thanks{School of Mathematics and Statistics, Carleton University, 1125 Colonel By Drive, Ottawa ON K1S 5B6, Canada. \newline First author's e-mail: \href{mailto:alexanderbors@cunet.carleton.ca}{alexanderbors@cunet.carleton.ca} \newline Second author's e-mail: \href{mailto:wang@math.carleton.ca}{wang@math.carleton.ca} \newline The authors are supported by the Natural Sciences and Engineering Research Council of Canada (RGPIN-2017-06410). \newline 2020 \emph{Mathematics Subject Classification}: Primary: 11T22. Secondary: 05A05, 11T24, 11T55. \newline \emph{Keywords and phrases}: Finite fields, Cyclotomy, Cyclotomic mappings, Complete mappings, Cycle structure.}}

\date{\today}

\maketitle

\abstract{We derive several existence results concerning cycle types and, more generally, the \enquote{mapping behavior} of complete mappings. Our focus is on so-called first-order cyclotomic mappings, which are functions on a finite field $\IF_q$ that fix $0$ and restrict to the multiplication $x\mapsto a_ix$ by a fixed element $a_i\in\IF_q$ on each coset $C_i$ of a given subgroup $C$ of $\IF_q^{\ast}$. The gist of two of our main results is that as long as $q$ is large enough relative to the index $|\IF_q^{\ast}:C|$, all cycle types of first-order cyclotomic permutations with only long cycles on $\IF_q^{\ast}$ can be achieved through a complete mapping, as can all permutations of the cosets of $C$. Our third main result provides new examples of complete mappings $f$ such that both $f$ and its associated orthomorphism $f+\id$ permute the nonzero field elements in one cycle.}

\section{Introduction}\label{sec1}

\subsection{Background and main results}\label{subsec1P1}

Let $G$ be an additive (though not necessarily abelian) group. A function $f:G\rightarrow G$ is called a \emph{complete mapping of $G$} if both $f$ and $f+\id:g\mapsto f(g)+g$ are permutations of $G$. In case $G$ is the underlying additive group of a field $K$, a complete mapping of $G$ is also called a \emph{complete mapping of $K$}.

Complete mappings were originally introduced by Mann in 1942 as a means to construct mutually orthogonal Latin squares \cite{Man42a}. The existence problem for complete mappings sparked interest in the group-theoretic community early on, with Bateman showing in 1950 that every infinite group admits a complete mapping \cite{Bat50a}, and Hall and Paige showing in 1955 that that a finite group does \emph{not} admit complete mappings if its Sylow $2$-subgroup is cyclic and nontrivial \cite{HP55a}. The converse of Hall and Paige's result is the celebrated Hall-Paige Conjecture, which is now a theorem thanks to the work of Wilcox \cite{Wil09a}, Evans \cite{Eva09a} and Bray \cite[Section 2]{BCCSZ20a}.

Complete mappings of finite fields have also been extensively studied, especially with regard to their polynomial representations. Niederreiter and Robinson's 1984 paper \cite{NR84a} is an early (and influential) contribution in this context. Later, practical applications of complete mappings such as check-digit systems \cite{Sch00a,SW10a} and the construction of cryptographic functions \cite{MP14a,SGCGM12a} were discovered, and this spurred great interest in them, see e.g.~\cite{ITW17a,TZH14a,Win14a,WLHZ14a,XC15a,ZHC15a}.

Recall that intuitively speaking, the \emph{cycle type} of a permutation of a finite set is the information how many (disjoint) cycles of each given length the permutation has (see also the beginning of Section \ref{sec2} for a formal definition). This paper is a contribution to the study of cycle types of complete mappings of finite fields (or, equivalently, of finite elementary abelian groups). Despite the investigations of a few authors such as \cite{FGT81a} or \cite[Section 4]{MP14a} (and \cite{SC16a} for the related concept of \emph{orthomorphisms}, permutations $f:G\rightarrow G$ such that $f-\id$ is also a permutation), our understanding of this aspect of complete mappings appears to be very limited still. For example, there are hardly any methods to identify or refute a given cycle type of a permutation of $q$ elements as the cycle type of some complete mapping of $\IF_q$.

The results of this paper are positive in the sense that they provide examples of cycle types or, more generally, \enquote{mapping behaviors} that can be achieved, rather than ways to refute a given cycle type as not possible for a complete mapping. The functions $\IF_q\rightarrow\IF_q$ that we will work with are of a particular form and are known as \enquote{first-order cyclotomic mappings}. More precisely, for a divisor $d$ of $q-1$, an \emph{index $d$ first-order cyclotomic mapping of $\IF_q$} is a function $f:\IF_q\rightarrow\IF_q$ such that
\[
f(x)=\begin{cases}0, & \text{if }x=0, \\ a_ix, & \text{if }x\in C_i\text{ for some }i=0,1,\ldots,d-1,\end{cases}
\]
where $a_0,a_1,\ldots,a_{d-1}\in\IF_q$ are fixed and $C_i=\omega^iC$ where $\omega$ is a fixed primitive root of $\IF_q$ and $C$ denotes the index $d$ subgroup of $\IF_q^{\ast}$. That is, on each coset $C_i$ of $C$ in $\IF_q^{\ast}$, $f$ is the multiplication by a fixed element $a_i$ of $\IF_q$. In particular, the index $1$ first-order cyclotomic mappings of $\IF_q$ are just the functions $x\mapsto ax$ for all $x\in\IF_q$, and first-order cyclotomic mappings of larger indices are natural generalizations thereof. Various authors have studied cyclotomic mappings and generalizations, see e.g.~\cite{Bel13a,NW05a,Wan07a,Wan13a,ZYZP16a}.

Note that by definition, every index $d$ first-order cyclotomic \emph{permutation} of $\IF_q$ (i.e., index $d$ first-order cyclotomic mapping of $\IF_q$ that is a permutation of $\IF_q$) has $0$ as a fixed point. Our following first main result states that as long as the cycle type of an index $d$ first-order cyclotomic permutation of $\IF_q$ has only \enquote{long} cycles apart from the one obligatory fixed point, it can be achieved as the cycle type of an index $d$ first-order cyclotomic complete mapping of $\IF_q$, provided that $q$ is large enough. In fact, Theorem \ref{mainTheo1} is even more general than that and allows one to demand that not only $f+\id$, but a fixed finite number $f+c_1\id,f+c_2\id,\ldots,f+c_n\id$ of \enquote{translates} of $f$ be permutations -- so, for example, it also covers the cases of orthomorphisms and strong complete mappings (functions that are both orthomorphisms and complete mappings).

\begin{theoremm}\label{mainTheo1}
Let $d$ and $n$ be positive integers, and let $\epsilon>0$. There is a $q_0=q_0(d,n,\epsilon)>0$ such that for all prime powers $q\geq q_0$ and all $c_1,c_2,\ldots,c_n\in\IF_q$, every cycle type of a first-order cyclotomic permutation of $\IF_q$ of index $d$ all of whose cycles on $\IF_q^{\ast}$ are of length at least $\epsilon q$ can be realized as the cycle type of a first-order cyclotomic permutation $f$ of $\IF_q$ such that $f+c_1\id,f+c_2\id,\ldots,f+c_n\id$ are permutations of $\IF_q$.
\end{theoremm}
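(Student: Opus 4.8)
The plan is to exploit a precise description of the cycle type of a first-order cyclotomic permutation in terms of its action on the cosets of $C$, and then to produce, by a character-sum count, a second permutation with the same cycle type and the extra ``translate'' properties. Write $e=(q-1)/d=|C|$. An index $d$ first-order cyclotomic mapping with coefficients $a_i$, say $a_i\in C_{s(i)}$, is a permutation exactly when all $a_i\neq0$ and $\sigma\colon i\mapsto i+s(i)\pmod d$ is a permutation of $\IZ/d\IZ$, since $f(C_i)=C_{\sigma(i)}$. For a cycle $(i_0,\dots,i_{\ell-1})$ of $\sigma$ the map $f^{\ell}$ acts on $C_{i_0}$ as multiplication by $b=\prod_j a_{i_j}$, which lies in $C$ because $\sum_j s(i_j)\equiv0\pmod d$; multiplication by $b$ breaks $C_{i_0}$ into $e/\ord(b)$ cycles of length $\ord(b)$. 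Hence a $\sigma$-cycle of length $\ell$ whose product has order $m$ contributes exactly $e/m$ cycles of length $\ell m$, and the cycle type of $f$ on $\IF_q^{\ast}$ is determined by $\sigma$ together with the product-orders of its cycles.

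Given the target cycle type, realised by some first-order cyclotomic permutation $f_0$, I would record its coset permutation $\sigma_0$, with cycle lengths $\ell_1,\dots,\ell_r$ (so $\sum_k\ell_k=d$), and the corresponding product-orders $m_1,\dots,m_r$, and then search among the $f$ with the \emph{same} $\sigma_0$ and the \emph{same} $m_k$, each of which automatically has the target cycle type and is a permutation. This is where the long-cycle hypothesis enters: every cycle of $f_0$ on $\IF_q^{\ast}$ has length at least $\epsilon q$, so the total number of cycles is below $1/\epsilon$; as the $k$-th $\sigma_0$-cycle already contributes $e/m_k$ of them, we get $m_k\geq\epsilon e$ for all $k$, whence $m_k\to\infty$ and $\varphi(m_k)\gg \epsilon e/\log\log e$ as $q\to\infty$.

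Writing $a_i=\omega^{s_0(i)}c_i$ with $c_i\in C$ and $s_0(i)=\sigma_0(i)-i$ fixed, matching $m_k$ means requiring on each $\sigma_0$-cycle that $\prod c_i$ be one of the $\varphi(m_k)$ elements of order $m_k$; the number of such admissible tuples is $\prod_k\varphi(m_k)\,e^{\ell_k-1}\gg e^{\,d}/(\log\log e)^{d}$. I would then count the admissible $f$ that are \emph{good}, meaning that $g_\nu:=f+c_\nu\id$ is a permutation for every $\nu$. Each $g_\nu$ is again index $d$ cyclotomic, with coefficients $a_i+c_\nu$, so it is a permutation iff all $a_i+c_\nu\neq0$ and $i\mapsto i+\text{coset}(a_i+c_\nu)$ is a bijection of $\IZ/d\IZ$. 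Expanding this last indicator over the permutations $\pi_\nu$ of the $d$ cosets and detecting every coset membership and product constraint by the multiplicative characters of $\IF_q^{\ast}$ trivial on $C$ and by the characters of $C$, respectively, turns the number $N_{\mathrm{good}}$ of good admissible $f$ into a normalised sum of products of mixed multiplicative character sums $\sum_{x\in\IF_q^{\ast}}\rho(x)\prod_{\nu}\chi_\nu(\beta x+c_\nu)$.

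The main obstacle is the estimation of this sum. The all-principal term yields the expected main term $(d!/d^{d})^{n}$ times the admissible count, which is strictly positive because coset patterns forming permutations exist; every other term should, by Weil's bound, replace an $O(e)$ inner factor by $O(\sqrt q)$, and a crude estimate saving one factor $\sqrt q$ per non-principal tuple --- the $\asymp e^{r}$ characters of $C$ being absorbed by the $e^{-r}$ normalisation --- bounds the total error by $O_{d,n}(e^{\,d-1/2})$ up to a fixed power of $d$. Making this rigorous requires verifying Weil's nontriviality hypotheses uniformly in the data, that is, ruling out the degenerate character combinations that keep the summand constant or a perfect power; this is delicate precisely when several of the $c_\nu$ coincide or vanish, and it must be checked that such combinations only feed controlled, non-negative contributions rather than breaking the main term. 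Granting these estimates, the main term dominates once $q\geq q_0(d,n,\epsilon)$, so $N_{\mathrm{good}}>0$ and a good $f$ exists; equivalently, a union bound over the finitely many $\nu$ shows the admissible $f$ failing some $g_\nu$ to be a strict minority.
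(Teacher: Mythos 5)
Your proposal is correct in substance but takes a genuinely different route from the paper. The structural part coincides: your description of the cycle type via the coset permutation $\sigma_0$ and the cycle-product orders $m_k$, and the bound $m_k\geq\epsilon(q-1)/d$ extracted from the long-cycle hypothesis, are exactly the paper's Lemma \ref{cyclotomicCTLem} and Proposition \ref{psiFunProp}. The difference is in the existence step. The paper collapses the problem to a single variable: after normalizing the coset permutation to a canonical representative (Definition \ref{specialPermDef}), it builds a one-parameter family $f_\omega$ all of whose coefficients are explicit powers of a primitive root $\omega$ (Lemma \ref{psiFunLem2}), so that each condition \enquote{$f_\omega+c_j\id$ permutes $\IF_q$} becomes \enquote{$P_{i,j}(\omega)\in C$} for finitely many polynomials $P_{i,j}$ of the form $T+c_j$ or $T^{m}+c_j$; Carlitz's theorem, in the form of Corollary \ref{carlitzCor} applied to the distinct irreducible factors of the $P_{i,j}$ together with $T$, then supplies a suitable $\omega$. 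You instead keep the given data $(\sigma_0,(m_k))$ and count the full $d$-dimensional family of admissible coefficient tuples, showing that a positive proportion is simultaneously good for all translates; this amounts to re-proving a multi-variable analogue of Carlitz's theorem rather than invoking it, and in exchange you get a quantitative count and avoid the special-permutation bookkeeping. Two remarks on the obstacles you flag, both of which are resolvable: (i) coinciding or zero $c_\nu$ should simply be discarded at the outset, exactly as the paper does in its proof of Theorem \ref{mainTheo1} (note $f+0\cdot\id=f$ is automatically a permutation, and equal $c_\nu$ impose identical conditions); after this reduction your polynomials $T,T+c_1,\ldots,T+c_n$ are linear, squarefree and pairwise coprime, so no degenerate Weil configurations arise at all. (ii) Your accounting of the exact-order constraints is slightly off: detecting $\ord\bigl(\prod_i c_i\bigr)=m_k$ requires the M{\"o}bius-weighted character sums of Carlitz (the paper's Lemma \ref{carlitzLem}), whose total weighted mass per $\sigma_0$-cycle is $O(\tau(q-1))=O(q^{o(1)})$ rather than the $O(1)$ suggested by your \enquote{$e^{r}$ characters absorbed by $e^{-r}$ normalisation} heuristic, so the error term comes out as $O_{d,n}(q^{d-1/2+o(1)})$ instead of $O_{d,n}(e^{d-1/2})$; since your main term is $\gg_{d,n,\epsilon}q^{d}/(\log\log q)^{d}$, this still closes the proof.
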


For example, if $q\equiv1\Mod{36}$, then $\IF_q$ has a first-order cyclotomic permutation of index $3$ with one fixed point (the field element $0$), three cycles of length $\frac{2}{9}(q-1)$ and four cycles of length $\frac{1}{12}(q-1)$ (see Example \ref{psiFunEx}). Theorem \ref{mainTheo1} guarantees that as long as $q$ is large enough, there is actually an index $3$ first-order cyclotomic (strong) complete mapping of $\IF_q$ with that cycle structure (see also Example \ref{psiFunEx2}).

We will be able to prove Theorem \ref{mainTheo1} using a theorem of Carlitz, \cite[Theorem 1]{Car56a}, which we will recall as Theorem \ref{carlitzTheo} in Section \ref{sec3}. This theorem grants some control over the multiplicative orders of several polynomial evaluations $Q_1(\xi),Q_2(\xi),\ldots,Q_r(\xi)$ at a single element $\xi\in\IF_q$.

Our next main result is not concerned with cycle types \emph{per se}, but rather with the \enquote{rough mapping behavior} of index $d$ cyclotomic mappings of $\IF_q$ (i.e., their mapping behavior on cosets of the index $d$ subgroup of $\IF_q^{\ast}$). It should not be surprising that this is easier to control than the precise mapping behavior (on elements of $\IF_q$) of cyclotomic mappings, and in fact, we have the following counterpart to Theorem \ref{mainTheo1}, which shows that complete control can be achieved:

\begin{theoremm}\label{mainTheo2}
Let $d$ and $n$ be positive integers. There is a $q_1=q_1(d,n)>0$ such that for all prime powers $q\geq q_1$ with $q\equiv1\Mod{d}$, the following holds: Let $c_1,c_2,\ldots,c_n\in\IF_q$ be pairwise distinct, and let $s_1,s_2\ldots,s_n$ be functions $M\rightarrow M$ where $M=\IF_q^{\ast}/C$ is the set of all cosets in $\IF_q^{\ast}$ of the index $d$ subgroup $C$ of $\IF_q^{\ast}$. Then there is a first-order cyclotomic mapping $f$ of $\IF_q$ of index $d$ such that for all $j=1,2,\ldots,n$ and all cosets $\Ccal$ of $C$ in $\IF_q^{\ast}$, one has $(f+c_j\id)(\Ccal)=s_j(\Ccal)$.
\end{theoremm}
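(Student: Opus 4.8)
The plan is to translate the conclusion into $d$ independent coset-membership problems, one for each coset of $C$, and then to solve each of them by a standard multiplicative character-sum count. First I would fix the identification of $M=\IF_q^{\ast}/C$ with $\IZ/d\IZ$ sending $C_i=\omega^iC$ to $i$, and write $\chi:\IF_q^{\ast}\to\IZ/d\IZ$ for the resulting quotient homomorphism. For an index $d$ first-order cyclotomic mapping $f$ with coefficients $a_0,\dots,a_{d-1}$ and for $x\in C_i$, one computes $(f+c_j\id)(x)=(a_i+c_j)x$, so that (provided $a_i+c_j\neq0$) the image $(f+c_j\id)(C_i)=(a_i+c_j)C_i$ is again a single coset, namely the one with index $\chi(a_i+c_j)+i$. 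Hence the requirement $(f+c_j\id)(C_i)=s_j(C_i)$ is equivalent to $\chi(a_i+c_j)=t_j(i)$, where $t_j(i):=s_j(i)-i\in\IZ/d\IZ$. Crucially, the constraints attached to different cosets involve disjoint sets of unknowns, so the whole problem decouples: it suffices, for each fixed $i\in\{0,\dots,d-1\}$, to produce a single $a_i\in\IF_q$ with $a_i+c_j\in C_{t_j(i)}$ for all $j=1,\dots,n$.

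For a fixed $i$, I would count the number $N_i$ of such $a_i$ by characters. Since $q\equiv1\Mod d$, there is a multiplicative character $\psi$ of $\IF_q^{\ast}$ of order exactly $d$ with $\psi(y)=\zeta^{\chi(y)}$ for $y\neq0$, where $\zeta=\e^{2\pi\mathrm{i}/d}$; the indicator of the event $\chi(y)=k$ is then $\frac1d\sum_{\ell=0}^{d-1}\psi^{\ell}(y)\zeta^{-\ell k}$, with the convention $\psi^{0}(0)=0$, which automatically discards the points $a=-c_j$. Expanding the product of these indicators over $j$ gives
\[
N_i=\frac{1}{d^n}\sum_{(\ell_1,\dots,\ell_n)\in(\IZ/d\IZ)^n}\Big(\prod_{j=1}^n\zeta^{-\ell_j t_j(i)}\Big)\sum_{a\in\IF_q}\prod_{j=1}^n\psi^{\ell_j}(a+c_j).
\]
The tuple $(\ell_1,\dots,\ell_n)=(0,\dots,0)$ contributes the main term $\frac{1}{d^n}(q-n)$, the $-n$ accounting for the excluded points $-c_1,\dots,-c_n$ (distinct by hypothesis).

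The heart of the argument is bounding the inner sum for every nonzero tuple. Writing it as $\sum_{a}\psi\big(\prod_j(a+c_j)^{\ell_j}\big)$, the polynomial $\prod_j(x+c_j)^{\ell_j}$ fails to be a perfect $d$-th power in $\IF_q[x]$ precisely because the $c_j$ are pairwise distinct while not all $\ell_j$ vanish modulo $d$; this is exactly where the distinctness of $c_1,\dots,c_n$ is used (two coincident $c_j$ with $t_j(i)$ conflicting would make the system outright unsolvable). Weil's bound for multiplicative character sums then yields $\big|\sum_a\psi(\prod_j(a+c_j)^{\ell_j})\big|\le(n-1)\sqrt q$ for each of the $d^n-1$ nonzero tuples, whence $N_i\ge\frac{q-n}{d^n}-n\sqrt q$. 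This is strictly positive once $q$ exceeds an explicit bound depending only on $d$ and $n$; taking $q_1(d,n)$ to be any such bound and applying the estimate for each of the finitely many values of $i$ produces coefficients $a_0,\dots,a_{d-1}$ and hence the desired $f$. I expect the only genuine difficulty to be this character-sum step, namely verifying the non-$d$-th-power condition and invoking Weil's bound uniformly; the reduction and the bookkeeping around the trivial character are routine.
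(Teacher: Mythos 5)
Your proposal is correct, and it reaches the conclusion by a genuinely leaner route than the paper. The paper's proof also decouples the problem coset by coset (choosing for each $i$ a multiplier of the form $b_i\gamma_i$ with $b_i=\omega^{\sigma_1(i)-i}$), but it then invokes its strengthened Carlitz theorem (Theorem \ref{carlitzGenTheo}, via Corollary \ref{carlitzGenCor}) applied to the linear polynomials $T, b_iT+c_2,\ldots,b_iT+c_n$; that machinery simultaneously forces $\gamma_i$ to have multiplicative order exactly $\frac{q-1}{d}$ (hence to generate $C$) \emph{and} prescribes the discrete logarithms mod $d$ of the $b_i\gamma_i+c_j$, and its proof requires the M{\"o}bius/totient bookkeeping of Lemma \ref{carlitzLem} together with asymptotics for $\phi$. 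You observe, correctly, that order control is never needed: only coset membership matters, so the orthogonality relation for the order-$d$ characters (which is precisely the paper's Lemma \ref{discLogLem}) plus the Davenport--Weil bound (the paper's Lemma \ref{davenportLem}, applied to the non-principal characters $\psi^{\ell_j}$ and the coprime squarefree linear polynomials $x+c_j$ with $\ell_j\neq0$) already gives $N_i=\frac{q-n}{d^n}+O(n\sqrt q)>0$ for $q$ large in terms of $d,n$ alone. Your route is thus self-contained, avoids the reduction to $c_1=0$ that the paper makes (you treat all $c_j$ symmetrically), and yields a clean explicit threshold of order $n^2d^{2n}$; what the paper's route buys is reuse of a theorem it needs anyway for its other results, plus the stronger conclusion that each multiplier can be taken to be a generator of $C$. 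One small bookkeeping caveat in your estimate: for mixed tuples (some $\ell_j=0$, some not), your convention $\psi^0(0)=0$ means the inner sum is not literally $\sum_a\psi\bigl(\prod_j(a+c_j)^{\ell_j}\bigr)$ but differs from it by at most $n-1$ terms of modulus $\leq1$, so the correct bound is $(n-1)\sqrt q+(n-1)$ rather than $(n-1)\sqrt q$; this is absorbed by the slack in your final inequality $N_i\geq\frac{q-n}{d^n}-n\sqrt q$ and does not affect the argument.
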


We note that the proof of Theorem \ref{mainTheo2} will require a stronger version of Carlitz's original theorem, stated as Theorem \ref{carlitzGenTheo} in this paper. This theorem provides not only control over the multiplicative orders of $Q_1(\xi),\ldots,Q_r(\xi)$, but also over the integer remainders upon divison by $d$ of their discrete logarithms $\log_{\omega}(Q_1(\xi)),\ldots,\log_{\omega}(Q_r(\xi))$ with regard to a fixed primitive root $\omega$ of $\IF_q$.

Finally, we consider the problem of simultaneously controlling the cycle structures of a complete mapping $f$ of $\IF_q$ and of its associated orthomorphism $f+\id$, which appears to be much harder than merely controlling their mapping behavior on cosets (as in Theorem \ref{mainTheo2}). One of the many interesting consequences of Carlitz's theorem mentioned above is that if $q$ is a large enough prime power, there is a primitive root $\omega$ of $\IF_q$ such that $\omega+1$ is also a primitive root of $\IF_q$. Consequently, the multiplication by $\omega$ on $\IF_q$ is an additive complete mapping $f:\IF_q\rightarrow\IF_q$ such that both $f$ and its associated orthomorphism $f+\id$ permute the elements of $\IF_q^{\ast}$ cyclically. We will explore these ideas further and adapt Carlitz's proof to obtain the following result, which shows that for many $q$, there are more examples of such complete mappings of $\IF_q$ than just additive ones:

\begin{theoremm}\label{mainTheo3}
Let $d>1$ be an integer. There is a constant $C=C(d)>0$ such that if $q$ is a prime power with $q\equiv1\Mod{d}$ and $q\geq C$, then the finite field $\IF_q$ admits a complete mapping $f$ with $f(0)=0$ such that both $f$ and $f+\id$ permute $\IF_q^{\ast}$ cyclically and $f$ is not an additive function $\IF_q\rightarrow\IF_q$.
\end{theoremm}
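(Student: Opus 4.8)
The plan is to reduce the single-cycle requirement to a pair of combinatorial conditions on the exponents of the multipliers, arrange these for $f$ directly, and then use (an adaptation of) Theorem \ref{carlitzGenTheo} to secure them for $f+\id$.

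First I would work out the cycle structure of a first-order cyclotomic permutation $g$ of index $d$ in terms of the exponents of its multipliers. Writing $g(x)=\omega^{b_i}x$ for $x\in C_i$ and identifying $\IF_q^{\ast}$ with $\IZ/(q-1)\IZ$ via $\omega^j\leftrightarrow j$, the map $g$ becomes $j\mapsto j+b_{j\bmod d}$. Tracking residues modulo $d$ shows that $g$ induces the coset permutation $\bar g\colon i\mapsto i+b_i\Mod d$ on $M=\IF_q^{\ast}/C\cong\IZ/d\IZ$, and that $g$ permutes $\IF_q^{\ast}$ in a single $(q-1)$-cycle if and only if (i) $\bar g$ is a $d$-cycle and (ii) the \emph{winding number} $T_g:=\frac1d\sum_{i=0}^{d-1}b_i$ is coprime to $e:=(q-1)/d$; here (ii) makes sense because (i) forces $\sum_i b_i\equiv0\Mod d$. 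I would record this as a lemma and apply it to both $g=f$ and $g=f+\id$, the latter being again first-order cyclotomic of index $d$, with multipliers $a_i+1$ (and with the nondegeneracy conditions $a_i\neq0,-1$ automatic for the elements produced below).

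For the construction I would keep the multiplier equal to a primitive root $\xi$ on all but one coset and perturb a single coset: set $a_i=\xi$ for $i=1,\ldots,d-1$ and $a_0=\xi^{1+dm}$ for an integer $m$ with $1\leq m\leq e-1$, taking $\omega=\xi$. Then $f$ is non-additive (as $\xi^{dm}\neq1$), every $b_i\equiv1\Mod d$, so $\bar f$ is the standard $d$-cycle $i\mapsto i+1$, and $T_f=1+m$; choosing $m$ with $\gcd(1+m,e)=1$ makes $f$ a single $(q-1)$-cycle, so $f$ is fully under control by elementary choices. The multipliers of $f+\id$ are $\xi+1$ on $d-1$ cosets and $\xi^{1+dm}+1$ on the remaining one, so to make $\overline{f+\id}$ the standard $d$-cycle it suffices that $\log_\xi(\xi+1)\equiv1\Mod d$ and $\log_\xi(\xi^{1+dm}+1)\equiv1\Mod d$. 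These requirements, together with $\xi$ being primitive, are precisely of the shape handled by Theorem \ref{carlitzGenTheo}, applied to the polynomials $X$, $X+1$, $X^{1+dm}+1$ (whose admissibility hypotheses one checks); this yields, for $q\geq C(d)$, a primitive $\xi$ with the prescribed log-residues modulo $d$, securing condition (i) for both maps and condition (ii) for $f$.

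The hard part will be condition (ii) for $f+\id$, namely that $T_{f+\id}=\frac1d\bigl(\log_\xi(\xi^{1+dm}+1)+(d-1)\log_\xi(\xi+1)\bigr)$ be coprime to $e$. Unlike the residue conditions, this depends on the discrete logarithms of $\xi+1$ and $\xi^{1+dm}+1$ modulo $e$ (indeed modulo each prime $p\mid e$), information the black-box theorem does not supply; this is exactly where one must adapt Carlitz's proof rather than merely cite it. I would fold, into the same Gauss/Jacobi-sum framework used for the primitivity and the residue conditions, the multiplicative characters whose order detects $T_{f+\id}\bmod p$ for $p\mid e$, and bound the resulting sum by Weil's estimate. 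The number of extra congruence conditions is $O(2^{t})$, where $t$ is the number of distinct prime divisors of $e$, which is $o(\sqrt q)$, so the count of admissible $\xi$ stays positive once $q\geq C(d)$; obtaining this bound uniformly and verifying that the conditions (primitivity, residues modulo $d$, coprimality modulo $e$) are jointly non-degenerate is the main technical burden. A possible shortcut is to exploit the still-free parameter $m$, arguing that for a fixed admissible $\xi$ the winding number $T_{f+\id}$ is coprime to $e$ for a positive proportion of $m$, though one must then re-verify the residue condition on $\log_\xi(\xi^{1+dm}+1)$ as $m$ varies.
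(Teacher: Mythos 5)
Your reduction of the single-cycle requirement to conditions (i) and (ii) is correct, and your construction of $f$ is essentially the paper's own: the paper perturbs one coset by $\omega^{o}$ with $o=ud-(d-1)$, which is exactly your $\xi^{1+dm}$ with $u=1+m$. You also correctly identify that the winding-number condition for $f+\id$, equivalently $\ord\bigl((\xi+1)^{d-1}(\xi^{1+dm}+1)\bigr)=\frac{q-1}{d}$, cannot be obtained from Theorem \ref{carlitzGenTheo} as a black box. However, there is a genuine gap that occurs \emph{before} the part you flag as hard: both your black-box application of Theorem \ref{carlitzGenTheo} to the polynomials $T$, $T+1$, $T^{1+dm}+1$ (for the residue conditions) and your plan to fold additional characters into the same framework and invoke Weil's estimate (for the winding-number condition) require these polynomials to be squarefree and \emph{pairwise coprime}, and this fails in general. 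In odd characteristic, $T+1$ divides $T^{1+dm}+1$ exactly when $1+dm$ is odd, which is forced whenever $d$ is even (in particular for $d=2$); in characteristic $2$, $T+1=T-1$ divides $T^{n}+1=T^{n}-1$ for every $n$, so coprimality never holds. Since Lemmas \ref{davenportLem} and \ref{davenportPlusLem} are only valid for pairwise coprime squarefree polynomials, your error analysis cannot get started in these cases; this non-coprimality is precisely the central obstruction that Section \ref{sec6} of the paper is built to overcome.

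The missing idea is the paper's factorization trick: arrange (via the analogue of Lemma \ref{uChoiceLem}) that the exponent $o=1+dm$ is odd and \emph{not divisible by} $p$ -- a point your parenthetical \enquote{whose admissibility hypotheses one checks} also glosses over, since squarefreeness of $T^{o}+1$ needs $p\nmid o$ -- then write $T^{o}+1=(T+1)Q$, verify that $T$, $T+1$, $Q$ are non-constant, squarefree and pairwise coprime, and use multiplicativity of characters to redistribute every character hitting $(\xi+1)^{d-1}(\xi^{o}+1)=(\xi+1)^{d}Q(\xi)$ onto this coprime system via $\chi\bigl((\xi+1)^{d}Q(\xi)\bigr)=\chi^{d}(\xi+1)\,\chi(Q(\xi))$. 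Only after this rewriting do the Weil bounds apply, yielding a dominant \enquote{all characters principal} term and a small remainder. (Your worry about $O(2^{t})$ extra congruence conditions, $t$ the number of prime divisors of $e$, is a non-issue: the Carlitz characteristic function $f_{(q-1)/d}$ packages the order condition with M\"obius weights, and the total error is of size $O(\tau(q-1)^{2}q^{1/2+\epsilon})$.) Your proposed shortcut -- fixing $\xi$ and varying $m$ -- could in principle sidestep the coprimality problem, since for $y=\xi^{1+dm}$ ranging over the coset $\xi C$ the relevant polynomials become just $T$ and $T+1$; but as stated it is far from a proof: the conditions $\gcd(1+m,e)=1$, $\log_{\xi}(y+1)\equiv1\Mod{d}$, and coprimality of the winding number to $e$ all become congruence conditions modulo $d$ and modulo the primes dividing $e$ on $\log_{\xi}y$ and $\log_{\xi}(y+1)$ simultaneously, so one must redo the entire Carlitz-type counting over the coset rather than merely \enquote{re-verify the residue condition}.
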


Note that Theorem \ref{mainTheo3} is concerned with the case where both $f$ and $f+\id$ have a $(q-1)$-cycle (and one fixed point). We remark that this is the \enquote{best situation} as far as simultaneous long cycles of $f$ and $f+\id$ are concerned when $q$ is even, and it is \enquote{almost the best situation} in general -- for more details, see Section \ref{sec7}, which discusses some open problems relating to Theorem \ref{mainTheo3}.

\subsection{Overview of this paper}\label{subsec1P2}

In Section \ref{sec2}, we formally introduce the concept of the cycle type $\CT(\sigma)$ of a permutation $\sigma$ of a finite set. We then proceed to analyze the possible cycle types of index $d$ first-order cyclotomic permutations of $\IF_q$, see Lemma \ref{cyclotomicCTLem}. This will allow us to conveniently enumerate these cycle types using the auxiliary concept of a \emph{$\Sym(d)$-function}, see Definition \ref{psiFunDef} and Lemma \ref{psiFunLem}. These results are preparations for the later proof of Theorem \ref{mainTheo1}.

In Section \ref{sec3}, we review and strengthen Carlitz's theorem \cite[Theorem 1]{Car56a} on the multiplicative orders of polynomial evaluations $Q_1(\xi),\ldots,Q_r(\xi)$. More precisely, Carlitz's theorem provides an asymptotic counting formula for the elements $\xi\in\IF_q$ such that $\ord(Q_i(\xi))=e_i$ for $i=1,2,\ldots,r$, where $e_1,\ldots,e_r$ are fixed divisors of $q-1$ (and $Q_1,\ldots,Q_r$ are non-constant, squarefree and pairwise coprime univariate polynomials over $\IF_q$). This counting is performed by
\begin{itemize}
\item expressing the characteristic function of the set of order $e$ elements of $\IF_q^{\ast}$ as a character sum, see Lemma \ref{carlitzLem}, and
\item applying Lemma \ref{davenportLem}, a fundamental lemma of Davenport, \cite[Formula (6) on p.~100]{Dav39a}, in a quantitatively strengthened form by virtue of a result of Weil, \cite{Wei41a}.
\end{itemize}
For proving our strengthened version of Carlitz's theorem, Theorem \ref{carlitzGenTheo}, we will follow the same basic approach, using that the characteristic function of the set of nonzero $\xi\in\IF_q$ with $\log_{\omega}(\xi)\equiv i\Mod{d}$ (for fixed integers $i$ and $d$) also has a simple representation as a character sum, see Lemma \ref{discLogLem}.

Each of Sections \ref{sec4}--\ref{sec6} is concerned with the proof of one of our main results:
\begin{itemize}
\item In Section \ref{sec4}, we prove Theorem \ref{mainTheo1}. The proof idea is to construct, for a given cycle type as in the theorem, an index $d$ first-order cyclotomic permutation $f_{\omega}$ of $\IF_q$ of a particular form, which has the given cycle type and depends on a primitive root $\omega$ of $\IF_q$. In order to show that the translates $f_{\omega}+c_j\id$ for $j=1,2,\ldots,n$ of $f_{\omega}$ are also permutations of $\IF_q$, we exploit that they are also index $d$ first-order cyclotomic mappings of $\IF_q$, and the coset-wise constants by which these functions multiply are certain polynomial evaluations at $\omega$. This allows us to apply Carlitz's original theorem to force these constants to lie in $C$, thus proving the theorem.
\item Section \ref{sec5} is dedicated to the proof of Theorem \ref{mainTheo2}. It turns out that this is a relatively straightforward application of our strengthened version of Carlitz's theorem, Theorem \ref{carlitzGenTheo}. Since we are only interested in the way the $d$ cosets $\omega^iC$, for $i=0,1,\ldots,d-1$, are mapped to each other, we care about the remainders modulo $d$ of the discrete logarithms with regard to $\omega$ of the coset-wise multiplication constants of the translates $f+c_j\id$, and these remainders can be controlled with Theorem \ref{carlitzGenTheo}.
\item In Section \ref{sec6}, we prove Theorem \ref{mainTheo3}. This, too, is done by considering index $d$ first-order cyclotomic permutations of $\IF_q$ of a specific form, but the problem is that the polynomials whose evaluations one wants to control are not pairwise coprime anymore. Hence one cannot apply Theorem \ref{carlitzGenTheo} here, but rather, one must analyze the counting formula using character sums separately and hope to be able to show that the main term still dominates over the error term, which turns out to be the case.
\end{itemize}

Finally, in Section \ref{sec7}, we discuss three open problems that are related to Theorem \ref{mainTheo3} and may serve as inspiration for further research.

\subsection{Notation and terminology}\label{subsec1P3}

We denote by $\IN$ the set of natural numbers (including $0$) and by $\IN^+$ the set of positive integers. Throughout this paper, the symbol $\phi$ denotes Euler's totient function, and $\mu$ denotes the M{\"o}bius function. The codomain and range of a function $f$ are denoted by $\codom(f)$ and $\ran(f)$ respectively.

For $d\in\IN^+$, the symmetric group of degree $d$, denoted $\Sym(d)$, is normally defined as the group of all permutations of the set $\{1,2,\ldots,d\}$, but for the purposes of this paper, it will be more convenient to define it as the group of all permutations of the set $\{0,1,\ldots,d-1\}$, so we do that.

If $q$ is a prime power, and $\IF_q$ is the finite field of size $q$, then for each primitive root $\omega$ of $\IF_q$ and each nonzero field element $\xi\in\IF_q^{\ast}$, we denote by $\log_{\omega}(\xi)$ the \emph{discrete logarithm of $\xi$ with respect to $\omega$} -- this is the unique integer $e\in\{0,1,\ldots,q-2\}$ such that $\xi=\omega^e$.

\section{Cycle types of first-order cyclotomic permutations}\label{sec2}

In this section, we briefly review the general concept of a cycle type as well as how to determine the cycle type of a first-order cyclotomic permutation of a finite field.

Let $\psi$ be a permutation of a finite set $X$ with $|X|=d$. As noted in the introduction, intuitively, the \emph{cycle type of $\psi$} is the information how many cycles of each length $\psi$ has. Formally, the cycle type of $\psi$ is defined as the monomial
\[
\CT(\psi):=x_1^{k_1}x_2^{k_2}\cdots x_d^{k_d}\in\IQ[x_1,\ldots,x_d]
\]
where $k_{\ell}$ denotes the number of length $\ell$ cycles of $\psi$ for $\ell=1,2,\ldots,d$. For example, if $\psi=(0,1,2)(3,4)(5,6)(7)\in\Sym(8)$, then
\[
\CT(\psi)=x_1x_2^2x_3.
\]
Lemma \ref{cyclotomicCTLem} below describes how to compute the cycle types of cyclotomic permutations of finite fields. For its formulation, we need some notation. Observe that when $f$ is an index $d$ cyclotomic permutation of $\IF_q$, then $f$ permutes the cosets of the index $d$ subgroup $C$ of $\IF_q^{\ast}$. Hence, if $\omega$ is a primitive root of $\IF_q$, and if we enumerate the cosets of $C$ according to $\omega$ as $C_i:=\omega^iC$ for $i=0,1,\ldots,d-1$, then there is a unique permutation $\psi=\psi_{f,\omega}\in\Sym(d)$ such that $f(C_i)=C_{\psi(i)}$.

\begin{notation}\label{piNot}
Let $d$ be a positive integer, let $q$ be a prime power with $q\equiv1\Mod{d}$, let $\omega$ be a primitive root of $\IF_q$, and let $C$ be the index $d$ subgroup of $\IF_q^{\ast}$. Moreover, let $f$ be an index $d$ first-order cyclotomic permutation of $\IF_q$, of the form
\[
f(x)=\begin{cases}0, & \text{if }x=0, \\ a_ix, & \text{if }x\in C_i=\omega^iC\text{ for some }i=0,1,\ldots,d-1.\end{cases}
\]
Let $\psi$ be the unique permutation in $\Sym(d)$ such that $f(C_i)=C_{\psi(i)}$ for $i=0,1,\ldots,d-1$. For each cycle $\zeta$ of $\psi$, we set
\[
\pi(\zeta)=\pi_{f,\omega}(\zeta):=\prod_{i\in\supp(\zeta)}{a_i}\in C.
\]
Moreover, we denote by $\ell(\zeta)$ the length of $\zeta$ and by $\supp(\zeta)$ the \emph{support of $\zeta$}, which is the set $\{i_1,i_2,\ldots,i_{\ell(\zeta)}\}$ of all indices in $\{0,1,\ldots,d-1\}$ that lie on $\zeta=(i_1,i_2,\ldots,i_{\ell(\zeta)})$ (note that $\supp(\zeta)=\{i\}$ in case $\zeta=(i)$ is a cycle of length $1$).
\end{notation}

\begin{lemma}\label{cyclotomicCTLem}
With the same assumptions as in Notation \ref{piNot}, we have that
\[
\CT(f)=x_1\prod_{\zeta}{x_{\ell(\zeta)\ord(\pi(\zeta))}^{(q-1)/(d\ord(\pi(\zeta)))}}
\]
where the running index $\zeta$ ranges over the (disjoint) cycles of $\psi$.
\end{lemma}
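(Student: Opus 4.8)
The plan is to analyze the action of $f$ on $\IF_q^{\ast}$ one cycle of the induced coset permutation $\psi$ at a time. The fixed point $0$ accounts for the leading factor $x_1$, so it remains to understand $f$ on $\IF_q^{\ast}=\bigcup_{i=0}^{d-1}C_i$. First I would observe that for each cycle $\zeta=(i_1,i_2,\ldots,i_{\ell})$ of $\psi$ (with $\ell=\ell(\zeta)$), the union $U_{\zeta}:=\bigcup_{i\in\supp(\zeta)}C_i$ is $f$-invariant, since $f(C_{i_j})=C_{\psi(i_j)}=C_{i_{j+1}}$ (indices read cyclically, so $i_{\ell+1}=i_1$); moreover these unions $U_{\zeta}$ partition $\IF_q^{\ast}$ as $\zeta$ ranges over the cycles of $\psi$. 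Hence the cycle type of $f$ on $\IF_q^{\ast}$ is the product of the contributions of the restrictions $f|_{U_{\zeta}}$, and it suffices to compute each such contribution separately.

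Fix one cycle $\zeta$. The key computation is to iterate $f$ along $\zeta$: for $x\in C_{i_1}$ one has $f(x)=a_{i_1}x\in C_{i_2}$, and then by induction $f^k(x)=a_{i_k}a_{i_{k-1}}\cdots a_{i_1}x\in C_{i_{k+1}}$ for $1\le k\le\ell$. In particular, after one full pass, $f^{\ell}(x)=\pi(\zeta)\,x$, where $\pi(\zeta)=\prod_{i\in\supp(\zeta)}a_i$; the product is taken over the support and is therefore independent of the starting index, since $\IF_q$ is commutative. Because $f^{\ell}$ maps $C_{i_1}$ back to itself, this also re-derives that $\pi(\zeta)\in C$.

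Next I would read off the orbit length. For $x\in U_{\zeta}$ and any $m\ge 1$, the point $f^{m}(x)$ returns to the coset of $x$ precisely when $\ell\mid m$, and writing $m=\ell t$ we get $f^{\ell t}(x)=\pi(\zeta)^{t}x$ (iterating the previous paragraph and using $\pi(\zeta)\in C$). Thus $f^{m}(x)=x$ holds iff $\ell\mid m$ and $\pi(\zeta)^{t}=1$, that is, iff $\ell\,\ord(\pi(\zeta))\mid m$. Hence every $x\in U_{\zeta}$ lies on an $f$-cycle of length exactly $\ell(\zeta)\,\ord(\pi(\zeta))$. Counting, $|U_{\zeta}|=\ell(\zeta)\cdot\frac{q-1}{d}$, so the number of these equal-length cycles is $\frac{\ell(\zeta)(q-1)/d}{\ell(\zeta)\ord(\pi(\zeta))}=\frac{q-1}{d\,\ord(\pi(\zeta))}$. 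This yields the contribution $x_{\ell(\zeta)\ord(\pi(\zeta))}^{(q-1)/(d\ord(\pi(\zeta)))}$ of $U_{\zeta}$; multiplying over all cycles $\zeta$ of $\psi$, together with the factor $x_1$ from the fixed point $0$, gives the claimed formula.

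The only delicate point is the orbit-length argument: one must check that no orbit closes up before completing $\ord(\pi(\zeta))$ full passes through the $\ell$ cosets. This follows cleanly from the fact that $f^{m}$ cannot fix $x$ unless it first returns $x$ to its own coset (forcing $\ell\mid m$), after which the residual action is merely multiplication by a power of $\pi(\zeta)$; everything else is routine bookkeeping with coset sizes.
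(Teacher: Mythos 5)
Your proposal is correct and follows essentially the same route as the paper's proof: decompose $\IF_q^{\ast}$ into the $f$-invariant unions $U_{\zeta}$, show by iterating $f$ along $\zeta$ that every point of $U_{\zeta}$ has cycle length exactly $\ell(\zeta)\ord(\pi(\zeta))$, and then count cycles via $|U_{\zeta}|=\ell(\zeta)(q-1)/d$. The only cosmetic difference is that you characterize the return times directly ($f^{m}(x)=x$ iff $\ell(\zeta)\ord(\pi(\zeta))\mid m$), whereas the paper phrases the same computation as "cycle length under $f$ equals $\ell(\zeta)$ times the cycle length under $f^{\ell(\zeta)}$"; the substance is identical.
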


\begin{proof}
If $x\in C_i$ for some $i\in\{0,1,\ldots,d-1\}$, then for each $\ell\in\IN^+$, we have that $f^{\ell}(x)\in C_{\psi^{\ell}(i)}$. In particular, a necessary condition for $f^{\ell}(x)=x$ is that $\psi^{\ell}(i)=i$, which shows that the cycle length of $x$ under $f$ is divisible by the length $\ell(\zeta(i))$ of the unique cycle $\zeta(i)$ of $\psi$ that contains $i$. Equivalently, we have that the cycle length of $x$ under $f$ is the $\ell(\zeta(i))$-fold of the cycle length of $x$ under the iterate $f^{\ell(\zeta(i))}$.

Now, iterating $f$ on $x$ for $\ell(\zeta(i))$ times, we find that
\begin{align*}
&(x\in C_i) \mapsto (a_ix\in C_{\psi(i)}) \mapsto (a_ia_{\psi(i)}x\in C_{\psi^2(i)}) \mapsto \cdots \mapsto \\
&(a_ia_{\psi(i)}\cdots a_{\psi^{\ell(\zeta(i))-1}(i)}x=\pi(\zeta(i))x\in C_{\psi^{\ell(\zeta(i))}(i)}=C_i).
\end{align*}
Since $x$ and $\pi(\zeta(i))x$ lie in the same coset $C_i$ of the multiplicative subgroup $C$, we conclude that $\pi(\zeta(i))\in C$. Moreover, every cycle of the multiplication by $\pi(\zeta(i))$ on $C_i$ is of length $\ord(\pi(\zeta(i)))$ -- in particular, the cycle length of $x$ under $f^{\ell(\zeta(i))}$ must be equal to $\ord(\pi(\zeta(i)))$, whence the cycle length of $x$ under $f$ equals $\ell(\zeta(i))\cdot\ord(\pi(\zeta(i)))$.

In summary, this shows that for each cycle $\zeta$ of $\psi$, on the union $U(\zeta):=\bigcup_{i\in\supp(\zeta)}{C_i}$, the permutation $f$ decomposes into
\[
\frac{|\bigcup_{i\in\supp(\zeta)}{C_i}|}{\ell(\zeta)\cdot\ord(\pi(\zeta))}=\frac{\ell(\zeta)(q-1)/d}{\ell(\zeta)\cdot\ord(\pi(\zeta))}=\frac{q-1}{d\ord(\pi(\zeta))}
\]
disjoint cycles, each of length $\ell(\zeta)\cdot\ord(\pi(\zeta))$. In other words, we have that
\[
\CT(f_{\mid U(\zeta)})=x_{\ell(\zeta)\ord(\pi(\zeta))}^{(q-1)/(d\ord(\pi(\zeta)))}.
\]
But the sets $U(\zeta)$ for the various cycles $\zeta$ of $\psi$ form a partition of $\IF_q^{\ast}$. This implies that
\[
\CT(f_{\mid{\IF_q^{\ast}}})=\prod_{\zeta}{\CT(f_{\mid U(\zeta)})}=\prod_{\zeta}{x_{\ell(\zeta)\ord(\pi(\zeta))}^{(q-1)/(d\ord(\pi(\zeta)))}}.
\]
Similarly, we have
\[
\CT(f)=\CT(f_{\mid\{0\}})\cdot\CT(f_{\mid{\IF_q^{\ast}}})=x_1\cdot\prod_{\zeta}{\CT(f_{\mid U(\zeta)})}=x_1\prod_{\zeta}{x_{\ell(\zeta)\ord(\pi(\zeta))}^{(q-1)/(d\ord(\pi(\zeta)))}},
\]
as required.
\end{proof}

We remark that Lemma \ref{cyclotomicCTLem} could also be proved as a special case of a more general theory (which we will not need in this paper). Indeed, the restrictions to $\IF_q^{\ast}$ of index $d$ first-order cyclotomic permutations of $\IF_q$ form a permutation group on $\IF_q^{\ast}$ that is isomorphic (as a permutation group) to the imprimitive permutational wreath product $C_{\reg}\wr\Sym(d)$, where $C_{\reg}$ denotes the (right-)regular permutation representation of the abstract group $C$ on itself (see \cite[Theorem 1.1.3(2) and its proof]{BW21a} for more details). The study of cycle types of wreath product elements dates back to a 1937 paper of P{\'o}lya, \cite{Pol37a}. In fact, in \cite[table at the bottom of p.~180]{Pol37a}, P{\'o}lya gives a concise formula for the so-called \emph{cycle index} of an arbitrary imprimitive permutational wreath product, which encodes the \enquote{global} information how many elements of each given cycle type there are in the wreath product. P{\'o}lya's result has a \enquote{local} counterpart (see \cite[Lemma 3.5]{BW21a}), which describes the cycle type of a given wreath product element, and Lemma \ref{cyclotomicCTLem} may be viewed as a special case of that.

Based on Lemma \ref{cyclotomicCTLem}, we now introduce some technical concepts that we will need for the proof of Theorem \ref{mainTheo1}:

\begin{definition}\label{psiFunDef}
Let $d$ be a positive integer, and let $\psi\in\Sym(\{0,1,\ldots,d-1\})$.
\begin{enumerate}
\item A \emph{$\psi$-function} is a function from the set of cycles of $\psi$ to $\IN^+$.
\item A \emph{$\Sym(d)$-function} is a function that is a $\sigma$-function for some $\sigma\in\Sym(d)$.
\item Let $h$ be a $\psi$-function, and recall that $\ran(h)$ denotes the \emph{range of $h$} (the set of all function values of $h$).
\begin{enumerate}
\item We say that a prime power $q$ is \emph{$h$-admissible} (or, equivalently, that $h$ is \emph{$q$-admissible}) if $d\cdot\lcm{\ran(h)}$ divides $q-1$.
\item We associate with $h$ a function $\gamma_h$ from the set of all $h$-admissible prime powers to the set of all monomials in $\IQ[x_n: n\in\IN^+]$ such that for each $h$-admissible prime power $q$, one has
\[
\gamma_h(q):=x_1\prod_{\text{cycles }\zeta\text{ of }\psi}{x_{\ell(\zeta)(q-1)/(d\cdot h(\zeta))}^{h(\zeta)}}.
\]
\end{enumerate}
\item Let $q$ be a prime power with $q\equiv1\Mod{d}$, let $\omega$ be a primitive root of $\IF_q$, let $C$ be the index $d$ subgroup of $\IF_q^{\ast}$, and denote by $C_i:=\omega^iC$ for $i=0,1,\ldots,d-1$ the various cosets of $C$ in $\IF_q^{\ast}$. Moreover, let $f$ be an index $d$ first-order cyclotomic permutation of $\IF_q^{\ast}$ such that $f(C_i)=C_{\psi(i)}$ for $i=0,1,\ldots,d-1$. The \emph{$\Sym(d)$-function of $f$ (with regard to $\omega$)}, denoted by $h_{f,\omega}$, is the $\psi$-function that maps each cycle $\zeta$ of $\psi$ to
\[
\frac{q-1}{d\ord(\pi(\zeta))}
\]
where $\pi(\zeta)=\pi_{f,\omega}(\zeta)$ is as in Notation \ref{piNot}.
\end{enumerate}
\end{definition}

The following simple lemma characterizes cycle types of index $d$ first-order cyclotomic permutations of a finite field in terms of $\Sym(d)$-functions:

\begin{lemma}\label{psiFunLem}
Let $d$ be a positive integer. The following hold:
\begin{enumerate}
\item Let $q$ be a prime power, let $\omega$ be a primitive root of $\IF_q$, and let $f$ be an index $d$ first-order cyclotomic permutation of $\IF_q$. Then $q$ is an $h_{f,\omega}$-admissible prime power, and $\CT(f)=\gamma_{h_{f,\omega}}(q)$.
\item Conversely, assume that $h$ is a $\Sym(d)$-function, and that $q$ is an $h$-admissible prime power. Then for each primitive root $\omega$ of $\IF_q$, there is an index $d$ first-order cyclotomic permutation $f$ of $\IF_q$ such that $h_{f,\omega}=h$ and, consequently, $\gamma_h(q)=\CT(f)$.
\end{enumerate}
In particular, for each prime power $q$ with $q\equiv1\Mod{d}$, the cycle types of index $d$ first-order cyclotomic permutations of $\IF_q$ are precisely the monomials of the form $\gamma_h(q)$ where $h$ is a $q$-admissible $\Sym(d)$-function.
\end{lemma}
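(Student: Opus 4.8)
The plan is to establish the two numbered parts separately, after which the concluding \enquote{in particular} claim follows at once by combining them. For part (1), I would first verify that $q$ is $h_{f,\omega}$-admissible. By Notation \ref{piNot} each $\pi(\zeta)$ lies in $C$, so $\ord(\pi(\zeta))$ divides $|C|=(q-1)/d$; hence $h_{f,\omega}(\zeta)=(q-1)/(d\ord(\pi(\zeta)))$ is a positive integer with $h_{f,\omega}(\zeta)\cdot\ord(\pi(\zeta))=(q-1)/d$, so every value of $h_{f,\omega}$ divides $(q-1)/d$. Consequently $\lcm(\ran(h_{f,\omega}))$ divides $(q-1)/d$, i.e.\ $d\cdot\lcm(\ran(h_{f,\omega}))$ divides $q-1$, which is exactly $h_{f,\omega}$-admissibility. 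The identity $\CT(f)=\gamma_{h_{f,\omega}}(q)$ is then a direct substitution: plugging $\ord(\pi(\zeta))=(q-1)/(d\,h_{f,\omega}(\zeta))$ into the formula of Lemma \ref{cyclotomicCTLem} turns its exponent $(q-1)/(d\ord(\pi(\zeta)))$ into $h_{f,\omega}(\zeta)$ and its subscript $\ell(\zeta)\ord(\pi(\zeta))$ into $\ell(\zeta)(q-1)/(d\,h_{f,\omega}(\zeta))$, which is precisely the monomial $\gamma_{h_{f,\omega}}(q)$ of Definition \ref{psiFunDef}.

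For part (2), write $h$ as a $\psi$-function for some $\psi\in\Sym(d)$, fix a primitive root $\omega$, and construct $f$ by prescribing the coset-wise multipliers $a_0,\ldots,a_{d-1}$. To force $f(C_i)=C_{\psi(i)}$ I would require $a_i\in\omega^{\psi(i)-i}C$; any such choice automatically has all $a_i\neq 0$, so $f$ maps each coset $C_i$ bijectively onto $C_{\psi(i)}$, and since these images run over all cosets as $\psi$ ranges over $\Sym(d)$, such an $f$ is indeed a permutation of $\IF_q$ fixing $0$. Writing $a_i=\omega^{\psi(i)-i}c_i$ with $c_i\in C$, for a cycle $\zeta$ of $\psi$ the product telescopes as $\pi(\zeta)=\omega^{\sum_{i\in\supp(\zeta)}(\psi(i)-i)}\prod_{i\in\supp(\zeta)}c_i=\prod_{i\in\supp(\zeta)}c_i$, because $\psi$ permutes $\supp(\zeta)$ and so the integer exponent sum vanishes.

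It then remains to choose the $c_i\in C$ so that each $\pi(\zeta)$ has order $(q-1)/(d\,h(\zeta))$, which yields $h_{f,\omega}(\zeta)=(q-1)/(d\ord(\pi(\zeta)))=h(\zeta)$. Since $q$ is $h$-admissible, $d\cdot h(\zeta)$ divides $q-1$, so $(q-1)/(d\,h(\zeta))$ is a positive integer dividing $|C|=(q-1)/d$; as $C$ is cyclic, it contains an element $g_\zeta$ of exactly that order. Because the cycle supports partition $\{0,1,\ldots,d-1\}$, the $d$ order constraints are independent: for each cycle $\zeta$ I pick one index $i_\zeta\in\supp(\zeta)$, set $c_{i_\zeta}=g_\zeta$, and set $c_i=1$ otherwise, giving $\pi(\zeta)=g_\zeta$ as desired. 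This produces $h_{f,\omega}=h$, and $\gamma_h(q)=\CT(f)$ then follows from part (1).

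Finally, the concluding claim is immediate: part (1) shows every cycle type of an index $d$ first-order cyclotomic permutation of $\IF_q$ has the form $\gamma_h(q)$ for the $q$-admissible $\Sym(d)$-function $h=h_{f,\omega}$, while part (2) shows conversely that each such $\gamma_h(q)$ is realized (note $q$-admissibility already forces $d\mid q-1$, so the index $d$ subgroup exists). I expect essentially no obstacle here beyond bookkeeping: parts (1) and the final claim are pure substitution into Lemma \ref{cyclotomicCTLem} and Definition \ref{psiFunDef}, and the only step that genuinely uses a hypothesis is the existence of $g_\zeta$ of the prescribed order, where $h$-admissibility is exactly what guarantees the target order is an integer dividing $|C|$. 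The disjointness of the cycle supports is what makes the order prescriptions independent, so there is no simultaneous-solvability issue to overcome.
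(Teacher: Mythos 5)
Your proof is correct and follows essentially the same route as the paper: part (1) by direct substitution into Lemma \ref{cyclotomicCTLem}, and part (2) by building $f$ with multipliers of the form $\omega^{\psi(i)-i}$ times an element of $C$, concentrating an element of the prescribed order $(q-1)/(d\,h(\zeta))$ at one designated index per cycle so that the product over each cycle telescopes to it. The only (immaterial) differences are that the paper places this element at $\max(\supp(\zeta))$ rather than an arbitrary $i_\zeta$, and verifies $\pi(\zeta)=b_\zeta$ by writing out the telescoping product instead of your exponent-sum argument.
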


\begin{proof}
Statement (1) is immediate from Lemma \ref{cyclotomicCTLem} and Definition \ref{psiFunDef}(4). For statement (2), let $\psi$ be the unique permutation in $\Sym(d)$ such that $h$ is a $\psi$-function, and for each cycle $\zeta$ of $\psi$, fix an element $b_{\zeta}\in\IF_q^{\ast}$ such that
\[
\frac{q-1}{d\ord(b_{\zeta})}=h(\zeta).
\]
In particular, $\ord(b_{\zeta})$ divides $\frac{q-1}{d}$, whence $b_{\zeta}\in C$. Denoting by $\zeta(i)$ for $i=0,1,\ldots,d-1$ the unique cycle of $\psi$ such that $i\in\supp(\zeta(i))$, set
\[
f(x):=\begin{cases}0, & \text{if }x=0, \\ \omega^{\psi(i)-i}x, & \text{if }x\in C_i\text{ and }i\not=\max(\supp(\zeta(i))), \\ \omega^{\psi(i)-i}b_{\zeta(i)}x, & \text{if }x\in C_i\text{ and }i=\max(\supp(\zeta(i))).\end{cases}
\]
Then $f:\IF_q\rightarrow\IF_q$ is clearly an index $d$ cyclotomic mapping of $\IF_q$, and since $b_{\zeta(i)}\in C$, we see by definition of $f$ that for each $i=0,1,\ldots,d-1$, we have $f(C_i)=\omega^{\psi(i)-i}C_i=C_{\psi(i)}$. Hence $f$ is an index $d$ cyclotomic permutation of $\IF_q$ such that $h_{f,\omega}$ is a $\psi$-function. Moreover, for each (length $\ell$) cycle $\zeta=(i_1,i_2,\ldots,i_{\ell})$ of $\psi$, say written such that $i_{\ell}=\max(\supp(\zeta))$, we find that
\[
\pi_{f,\omega}(\zeta)=\omega^{i_2-i_1}\omega^{i_3-i_2}\cdots\omega^{i_{\ell}-i_{\ell-1}}\cdot\omega^{i_1-i_{\ell}}b_{\zeta}=b_{\zeta},
\]
whence
\[
h_{f,\omega}(\zeta)=\frac{q-1}{d\ord(b_{\zeta})}=h(\zeta),
\]
as required.
\end{proof}

\begin{example}\label{psiFunEx}
Let $d=3$, let $\psi=(0,1)(2)\in\Sym(3)$, and let $h$ be the $\psi$-function with $h((0,1))=3$ and $h((2))=4$. Then
\[
d\cdot\lcm(\ran(h))=3\cdot\lcm(3,4)=3\cdot12=36.
\]
Hence a prime power $q$ is $h$-admissible if and only if $q\equiv1\Mod{36}$. Moreover, by Lemma \ref{psiFunLem}(2), if $q$ is $h$-admissible, then $\IF_q$ has an index $3$ first-order cyclotomic permutation of cycle type
\[
\gamma_h(q)=x_1x_{\ell((0,1))\frac{q-1}{3h((0,1))}}^{h((0,1))}x_{\ell((2))\frac{q-1}{3h((2))}}^{h((2))}=x_1x_{\frac{2}{9}(q-1)}^3x_{\frac{1}{12}(q-1)}^4.
\]
\end{example}

For the proof of Theorem \ref{mainTheo1}, we are interested in index $d$ first-order cyclotomic permutations of $\IF_q$ which have only long cycles on $\IF_q^{\ast}$ (of length at least $\epsilon q$ for some constant $\epsilon>0$) on $\IF_q^{\ast}$. With regard to those, we have the following result:

\begin{proposition}\label{psiFunProp}
Let $d$ be a positive integer, and let $\epsilon>0$. For each prime power $q$ with $q\equiv1\Mod{d}$, we have that every cycle type of an index $d$ first-order cyclotomic permutation of $\IF_q$ all of whose cycles on $\IF_q^{\ast}$ are of length at least $\epsilon q$ can be written in the form $\gamma_h(q)$ where $h$ is a suitable $q$-admissible $\Sym(d)$-function such that $\max(\ran(h))\leq\epsilon^{-1}$.
\end{proposition}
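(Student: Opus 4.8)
The plan is to apply Lemma~\ref{psiFunLem}(1) to rewrite the given cycle type as $\gamma_h(q)$ for a canonical choice of $h$, and then to read off the desired bound on $\max(\ran(h))$ directly from the hypothesis that all cycles on $\IF_q^{\ast}$ are long. There is no counting or asymptotic analysis involved; the statement is essentially an unwinding of the explicit cycle-length formula packaged in $\gamma_h$.

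First I would fix an index $d$ first-order cyclotomic permutation $f$ of $\IF_q$ realizing the given cycle type, together with a primitive root $\omega$ of $\IF_q$; let $\psi=\psi_{f,\omega}\in\Sym(d)$ be the induced permutation of the cosets of the index $d$ subgroup of $\IF_q^{\ast}$. By Lemma~\ref{psiFunLem}(1), $q$ is $h_{f,\omega}$-admissible and $\CT(f)=\gamma_{h_{f,\omega}}(q)$, so it suffices to take $h:=h_{f,\omega}$ and verify that $\max(\ran(h))\leq\epsilon^{-1}$.

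Next I would extract the relevant length condition from the shape of $\gamma_h(q)$. Recall from Definition~\ref{psiFunDef}(3)(b) that
\[
\gamma_h(q)=x_1\prod_{\text{cycles }\zeta\text{ of }\psi}{x_{\ell(\zeta)(q-1)/(d\,h(\zeta))}^{h(\zeta)}},
\]
so that each cycle $\zeta$ of $\psi$ contributes $h(\zeta)$ cycles of $f$ on $\IF_q^{\ast}$, each of length $\ell(\zeta)(q-1)/(d\,h(\zeta))$. By assumption every cycle of $f$ on $\IF_q^{\ast}$ has length at least $\epsilon q$, so for each cycle $\zeta$ of $\psi$ we have
\[
\frac{\ell(\zeta)(q-1)}{d\,h(\zeta)}\geq\epsilon q,
\]
which rearranges to $h(\zeta)\leq\dfrac{\ell(\zeta)(q-1)}{d\,\epsilon q}$.

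Finally I would bound the right-hand side uniformly. Since $\zeta$ is a cycle of a permutation in $\Sym(d)$ we have $\ell(\zeta)\leq d$, and of course $(q-1)/q<1$; hence
\[
h(\zeta)\leq\frac{\ell(\zeta)(q-1)}{d\,\epsilon q}\leq\frac{d(q-1)}{d\,\epsilon q}=\frac{1}{\epsilon}\cdot\frac{q-1}{q}<\epsilon^{-1}
\]
for every cycle $\zeta$ of $\psi$. Therefore $\max(\ran(h))<\epsilon^{-1}$, which in particular gives $\max(\ran(h))\leq\epsilon^{-1}$, as required. I do not anticipate any genuine obstacle here: the only points requiring a little care are the correct translation between the hypothesis on the cycle lengths of $f$ on $\IF_q^{\ast}$ and the resulting per-cycle inequality on $h(\zeta)$, and the use of the crude but sufficient bound $\ell(\zeta)\leq d$ to make the estimate independent of $\psi$ and $q$.
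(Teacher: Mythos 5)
Your proposal is correct and follows essentially the same route as the paper's own proof: invoke Lemma~\ref{psiFunLem}(1) to write the cycle type as $\gamma_{h_{f,\omega}}(q)$, read off the per-cycle length $\ell(\zeta)(q-1)/(d\,h_{f,\omega}(\zeta))$, and rearrange the hypothesis $\geq\epsilon q$ using $\ell(\zeta)\leq d$ and $(q-1)/q\leq1$ to get $h_{f,\omega}(\zeta)\leq\epsilon^{-1}$. No discrepancies worth noting.
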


\begin{proof}
Fix a primitive root $\omega$ of $\IF_q$, let $C$ be the index $d$ subgroup of $\IF_q^{\ast}$, and let $f$ be an index $d$ first-order cyclotomic permutation of $\IF_q$ such that all cycles of $f$ on $\IF_q^{\ast}$ are of length at least $\epsilon q$. Let $\psi$ be the unique permutation in $\Sym(d)$ such that $f(\omega^iC)=\omega^{\psi(i)}C$ for $i=0,1,\ldots,d-1$. By Lemma \ref{psiFunLem}(1), we know that $h_{f,\omega}$ is a $q$-admissible $\psi$-function with
\[
\CT(f)=\gamma_{h_{f,\omega}}(q)=x_1\prod_{\text{cycles }\zeta\text{ of }\psi}{x_{\ell(\zeta)(q-1)/(d\cdot h_{f,\omega}(\zeta))}^{h_{f,\omega}(\zeta)}}.
\]
It follows that the cycle lengths of $f$ on $\IF_q^{\ast}$ are of the form
\[
\ell(\zeta)\frac{q-1}{dh_{f,\omega}(\zeta)}
\]
for the various cycles $\zeta$ of $\psi$. In particular, for each cycle $\zeta$ of $\psi$, we have
\[
\ell(\zeta)\frac{q-1}{dh_{f,\omega}(\zeta)}\geq\epsilon q
\]
and thus
\[
h_{f,\omega}(\zeta)\leq\epsilon^{-1}\cdot\frac{q-1}{q}\cdot\frac{\ell(\zeta)}{d}\leq\epsilon^{-1}\cdot1\cdot 1=\epsilon^{-1},
\]
as required.
\end{proof}

\section{A generalization of a theorem of Carlitz}\label{sec3}

The following theorem, which is due to Carlitz (\cite[Theorem 1]{Car56a}) provides a good amount of control over the multiplicative orders of polynomial evaluations at a fixed element of $\IF_q$:

\begin{theorem}\label{carlitzTheo}(Carlitz, \cite[Theorem 1]{Car56a})
Let $q$ be a prime power, and assume that $Q_1,Q_2,\ldots,Q_r\in\IF_q[T]$ are non-constant, squarefree and pairwise coprime. Fix divisors $e_1,e_2,\ldots,e_r$ of $q-1$, and denote by $N=N(q,Q_1,\ldots,Q_r,e_1,\ldots,e_r)$ the number of $\xi\in\IF_q$ such that for $i=1,2,\ldots,r$, the polynomial evaluation $Q_i(\xi)$ is nonzero and of multiplicative order $e_i$. Then for each $\epsilon>0$, as $q\to\infty$, the number $N$ satisfies the asymptotic formula
\[
N=\frac{\phi(e_1)\phi(e_2)\cdots\phi(e_r)}{(q-1)^r}q+O((\deg{Q_1}+\deg{Q_2}+\cdots+\deg{Q_r})q^{\frac{1}{2}+\epsilon}),
\]
where $\phi$ denotes Euler's totient function.
\end{theorem}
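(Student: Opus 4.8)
The plan is to count $N$ analytically, by writing the condition ``$Q_i(\xi)$ is nonzero of multiplicative order $e_i$'' as a sum of multiplicative characters evaluated at $Q_i(\xi)$, and then splitting the resulting expression into a dominant main term and a collection of error terms that are controlled by Weil's bound.

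First I would express, for each $i$, the characteristic function of the set of order-$e_i$ elements of $\IF_q^{\ast}$ as a character sum; this is the content of Lemma \ref{carlitzLem}. Concretely, for $e\mid q-1$ and $y\in\IF_q$, orthogonality of characters of the cyclic group $\IF_q^{\ast}$ gives $\mathbf 1_{\ord(y)=e}(y)=\sum_{d\mid e}\mu(e/d)\,\frac{d}{q-1}\sum_{\chi^{(q-1)/d}=1}\chi(y)$, where $\chi$ runs over multiplicative characters and we adopt the convention $\chi(0)=0$; note this makes each indicator vanish automatically at $y=0$, so the nonvanishing requirement $Q_i(\xi)\neq 0$ is built in. Substituting this into $N=\sum_{\xi\in\IF_q}\prod_{i=1}^r\mathbf 1_{\ord(Q_i(\xi))=e_i}$ and expanding the product, I obtain $N$ as a linear combination, indexed by divisor tuples $(d_1,\ldots,d_r)$ with $d_i\mid e_i$ and character tuples $(\chi_1,\ldots,\chi_r)$ with $\chi_i^{(q-1)/d_i}=1$, of the inner sums $S(\chi_1,\ldots,\chi_r):=\sum_{\xi\in\IF_q}\prod_{i=1}^r\chi_i(Q_i(\xi))$.

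Next I would isolate the main term, which comes from the all-trivial tuple $\chi_1=\cdots=\chi_r=\chi_0$. Here $S(\chi_0,\ldots,\chi_0)$ simply counts the $\xi$ at which no $Q_i$ vanishes, so $S(\chi_0,\ldots,\chi_0)=q+O(\deg Q_1+\cdots+\deg Q_r)$. This trivial term occurs for \emph{every} divisor tuple, with coefficient $\prod_i\mu(e_i/d_i)\,d_i/(q-1)$; summing over all $(d_1,\ldots,d_r)$ and using $\sum_{d\mid e}\mu(e/d)d=\phi(e)$ collapses these coefficients to $\prod_i\phi(e_i)/(q-1)$. Thus the main term contributes $\frac{\phi(e_1)\cdots\phi(e_r)}{(q-1)^r}\,q$ plus an error of size $O(\sum_i\deg Q_i)$, which is absorbed into the claimed bound.

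It remains to bound the error terms, namely the contribution of all tuples with some $\chi_i\neq\chi_0$; this is the crux and rests on Weil's bound. Fixing a generator $\psi$ of the cyclic character group and writing $\chi_i=\psi^{a_i}$, I would rewrite $\prod_i\chi_i(Q_i(\xi))=\psi(R(\xi))$ with $R:=\prod_i Q_i^{a_i}$, up to a discrepancy of at most $\sum_i\deg Q_i$ summands arising from points where some $Q_i$ vanishes but $R$ does not. Since the $Q_i$ are squarefree and pairwise coprime, each irreducible factor of $Q_i$ appears in $R$ with multiplicity exactly $a_i$, so $R$ is a constant times a $(q-1)$-th power precisely when $(q-1)\mid a_i$ for all $i$, i.e.\ exactly when every $\chi_i$ is trivial; hence for a nontrivial tuple $R$ is \emph{not} a perfect $(q-1)$-th power, and the Weil-strengthened form of Davenport's lemma (Lemma \ref{davenportLem}) gives $|S(\chi_1,\ldots,\chi_r)|\le(\deg Q_1+\cdots+\deg Q_r-1)\sqrt q$. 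Finally I would sum these bounds: for each fixed divisor tuple, the product of the coefficient bound $\prod_i d_i/(q-1)$ with the number $\prod_i (q-1)/d_i$ of admissible character tuples equals $1$, so each divisor tuple contributes $O((\sum_i\deg Q_i)\sqrt q)$; and there are $\prod_i\tau(e_i)=O(q^{\epsilon})$ such tuples (treating $r$ as fixed and using $\tau(e_i)\ll_{\epsilon} q^{\epsilon/r}$), for a total error of $O((\sum_i\deg Q_i)q^{1/2+\epsilon})$. The main obstacle is precisely this nontrivial-character sum bound: everything else is orthogonality and bookkeeping, whereas the square-root cancellation in $S$ is exactly where the deep input of Weil enters, and some care is needed to deduce the non-perfect-power hypothesis from the squarefreeness and pairwise coprimality of the $Q_i$.
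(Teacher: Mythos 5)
Your proposal is correct and follows essentially the same route the paper uses: expanding the order-indicator as the character sum of Lemma \ref{carlitzLem}, extracting the main term from the all-principal tuples via $\sum_{d\mid e}\mu(e/d)d=\phi(e)$, and bounding the nontrivial tuples by the Weil--Davenport estimate, with your direct ``$R=\prod_i Q_i^{a_i}$ is not a constant times a $(q-1)$-th power'' argument being exactly the content of Lemmas \ref{davenportLem} and \ref{davenportPlusLem}(1) (the latter handling tuples with some, but not all, characters principal). The only cosmetic difference is that the paper states this theorem as a citation of Carlitz and deploys the same machinery to prove its generalization, Theorem \ref{carlitzGenTheo}, whereas you carry the bookkeeping out for the original statement itself.
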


The following is an interesting consequence of Carlitz's asymptotic formula:

\begin{corollary}\label{carlitzCor}
Let $r$ and $d_1,d_2,\ldots,d_r$ as well as $k$ be positive integers. If $q$ is a large enough (relative to $r$, the $d_i$ and $k$) prime power with $q\equiv1\Mod{\lcm(d_1,\ldots,d_r)}$, and if $Q_1,Q_2,\ldots,Q_r\in\IF_q[T]$ are square-free and pairwise coprime polynomials with $\sum_{i=1}^r{\deg{Q_i}}\leq k$, then there is an element $\xi\in\IF_q$ such that for $i=1,2,\ldots,r$, the polynomial evaluation $Q_i(\xi)$ is nonzero and has multiplicative order $\frac{q-1}{d_i}$.
\end{corollary}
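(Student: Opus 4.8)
The plan is to obtain the desired $\xi$ by a direct application of Carlitz's Theorem \ref{carlitzTheo}, with the target multiplicative orders $e_i := (q-1)/d_i$. First I would check that these are legitimate choices: since $d_i \mid \lcm(d_1,\ldots,d_r) \mid q-1$ by hypothesis, each $e_i = (q-1)/d_i$ is a positive integer dividing $q-1$, so Theorem \ref{carlitzTheo} applies to the (non-constant) square-free, pairwise coprime polynomials $Q_1,\ldots,Q_r$ with these prescribed orders. Writing $N$ for the number of $\xi \in \IF_q$ such that $Q_i(\xi)$ is nonzero of order $e_i$ for every $i$, and using $\sum_{i=1}^r \deg Q_i \leq k$ to bound the degree factor in the error, Carlitz's formula gives
\[
N = \frac{\phi(e_1)\cdots\phi(e_r)}{(q-1)^r}\,q + O\bigl(k\,q^{1/2+\epsilon}\bigr).
\]
It then suffices to show that the main term outgrows the error term as $q \to \infty$, for then $N > 0$ and at least one such $\xi$ exists.

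The crux is a good lower bound for the main term, equivalently for each factor $\phi(e_i)/(q-1) = \phi((q-1)/d_i)/(q-1)$. Here I would invoke the classical minimal-order estimate for Euler's totient, $\phi(m) \geq c_0\, m/\log\log m$ for all $m \geq 3$ with an absolute constant $c_0 > 0$. This yields
\[
\frac{\phi(e_i)}{q-1} \geq \frac{c_0}{d_i\,\log\log((q-1)/d_i)} \geq \frac{c_0}{d_i\,\log\log(q-1)}
\]
once $q$ is large, and hence
\[
\frac{\phi(e_1)\cdots\phi(e_r)}{(q-1)^r}\,q \geq \frac{c_0^{\,r}}{d_1\cdots d_r}\cdot\frac{q}{(\log\log(q-1))^r}.
\]
Since $r$ and the $d_i$ are fixed, the main term is therefore of size $\gg q/(\log\log q)^r$.

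Finally I would fix any $\epsilon \in (0,1/2)$, say $\epsilon = 1/4$, so that the error term is $O(k\,q^{3/4})$. Comparing,
\[
\frac{q/(\log\log q)^r}{q^{3/4}} = \frac{q^{1/4}}{(\log\log q)^r} \longrightarrow \infty \qquad (q \to \infty),
\]
so the main term dominates for all $q$ exceeding a threshold depending only on $r$, the $d_i$, and $k$; for such $q$ we get $N > 0$, which proves the corollary. I expect the main obstacle to be precisely this totient lower bound: a crude estimate such as $\phi(m) \geq \sqrt{m}$ would only give a main term of size $q^{1-r/2}$ (or $q^{1/2}$ when $r=1$), which cannot beat the error $q^{1/2+\epsilon}$. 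It is essential to use that $\phi(m)/m$ is at worst of order $1/\log\log m$, together with the fact that this bound is uniform in $m$ and hence uniform over the relevant prime powers $q$.
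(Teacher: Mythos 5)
Your proposal is correct and follows essentially the same route as the paper: apply Theorem \ref{carlitzTheo} with $e_i=(q-1)/d_i$, bound the main term from below using a lower estimate for Euler's totient, and pick $\epsilon$ so that the error term $O(kq^{1/2+\epsilon})$ loses. The only cosmetic difference is that the paper uses the bound $\phi(n)\gg_{\delta}n^{1-\delta}$ (with $\delta=\frac{1}{4r}$ and $\epsilon=\frac{1}{6}$) instead of your sharper $\phi(m)\gg m/\log\log m$ -- so, contrary to your closing remark, the $\log\log$-quality estimate is not essential; any fixed-exponent bound $n^{1-\delta}$ with $\delta<\frac{1}{2r}-\frac{\epsilon}{r}$ already suffices.
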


\begin{proof}
By Theorem \ref{carlitzTheo}, for every $\epsilon>0$, the number $N$ of these $\xi$ satisfies the asymptotic formula
\[
N=\frac{\phi(\frac{q-1}{d_1})\phi(\frac{q-1}{d_2})\cdots\phi(\frac{q-1}{d_r})}{(q-1)^r}q+O(kq^{\frac{1}{2}+\epsilon})
\]
as $q\to\infty$. The statement of the corollary is clear if we can show that the main term
\[
\frac{\phi(\frac{q-1}{d_1})\phi(\frac{q-1}{d_2})\cdots\phi(\frac{q-1}{d_r})}{(q-1)^r}q
\]
asymptotically dominates over the error term $O(kq^{\frac{1}{2}+\epsilon})$ (for some suitable $\epsilon>0$). Now, using that as $n\to\infty$, the Euler totient function $\phi(n)$ asymptotically dominates over $n^{1-\delta}$ for each fixed $\delta>0$, we see that if $q$ is large enough, the main term satisfies the bounds
\begin{align*}
\frac{\phi(\frac{q-1}{d_1})\phi(\frac{q-1}{d_2})\cdots\phi(\frac{q-1}{d_r})}{(q-1)^r}q &\geq \frac{(\frac{q-1}{d_1})^{1-\delta}(\frac{q-1}{d_2})^{1-\delta}\cdots(\frac{q-1}{d_r})^{1-\delta}}{(q-1)^r}q \\
&=\frac{(q-1)^{-r\delta}}{(d_1\cdots d_r)^{1-\delta}}q\geq\frac{q^{-r\delta}}{d_1\cdots d_r}q=\frac{1}{d_1\cdots d_r}q^{1-r\delta}.
\end{align*}
In particular, the main term will dominate over the error term as long as
\[
1-r\delta>\frac{1}{2}+\epsilon,
\]
which can be achieved e.g.~with $\epsilon:=\frac{1}{6}$ and $\delta:=\frac{1}{4r}$.
\end{proof}

The main goal of this section is to prove the following variant of Theorem \ref{carlitzTheo}, which gives even more control:

\begin{theorem}\label{carlitzGenTheo}
Let $d_1,\ldots,d_r$ and $d$ be positive integers such that $\lcm(d_1,\ldots,d_r)$ divides $d$, and let $j_1,\ldots,j_r$ be integers such that $\gcd(j_i,d)=d_i$ for $i=1,\ldots,r$. Moreover, let $q$ be a prime power with $q\equiv1\Mod{d}$, let $\omega$ be a primitive root of $\IF_q$, and let $Q_1,\ldots,Q_r\in\IF_q[T]$ be non-constant, squarefree and pairwise coprime polynomials. For every $\epsilon>0$, the number of $\xi\in\IF_q$ such that for each $i=1,\ldots,r$, one has
\begin{itemize}
\item $Q_i(\xi)\not=0$,
\item $\ord(Q_i(\xi))=\frac{q-1}{d_i}$, and
\item $\log_{\omega}{Q_i(\xi)}\equiv j_i\Mod{d}$
\end{itemize}
is
\[
c\frac{\phi(\frac{q-1}{d_1})\cdots\phi(\frac{q-1}{d_r})}{(q-1)^r}q+O((\deg{Q_1}+\cdots+\deg{Q_r})q^{\frac{1}{2}+\epsilon})
\]
where
\[
c:=c(d_1,\ldots,d_r,d):=\frac{1}{d^r}\prod_{i=1}^r{\sum_{z_i\mid d}{\frac{\mu(\frac{z_i}{\gcd(z_i,d_i)})^2\phi(z_i)}{\phi(\frac{z_i}{\gcd(z_i,d_i)})^2}}}\in\left[\frac{1}{d^r},1\right],
\]
with $\mu$ denoting the M{\"o}bius function.
\end{theorem}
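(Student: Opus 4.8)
The plan is to follow the same character-sum strategy already used for Carlitz's original Theorem~\ref{carlitzTheo}, now feeding in the extra discrete-logarithm congruences via Lemma~\ref{discLogLem}. Writing $e_i:=\frac{q-1}{d_i}$, I would first express the quantity to be counted as
\[
N=\sum_{\xi\in\IF_q}\prod_{i=1}^r g_i(\xi),
\]
where $g_i(\xi)$ is the indicator of the event that $Q_i(\xi)\neq0$, $\ord(Q_i(\xi))=e_i$, and $\log_{\omega}Q_i(\xi)\equiv j_i\Mod{d}$. The key observation is that $g_i$ is the product of two indicator functions of the single value $Q_i(\xi)$: the ``exact order $e_i$'' indicator, which Lemma~\ref{carlitzLem} writes as a multiplicative character sum $\sum_{\chi}\beta_i(\chi)\chi(\,\cdot\,)$, and the ``discrete logarithm $\equiv j_i\Mod{d}$'' indicator, which Lemma~\ref{discLogLem} writes as $\frac1d\sum_{\ord(\lambda)\mid d}\overline{\lambda(\omega^{j_i})}\lambda(\,\cdot\,)$. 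Multiplying these and collapsing each product $\chi\lambda$ into a single character $\psi$, each $g_i$ takes the form $\sum_{\psi}\alpha_i(\psi)\,\psi(Q_i(\xi))$ with explicit coefficients $\alpha_i$. Expanding the product over $i$ and interchanging summation then yields
\[
N=\sum_{(\psi_1,\dots,\psi_r)}\Big(\prod_{i=1}^r\alpha_i(\psi_i)\Big)\sum_{\xi\in\IF_q}\prod_{i=1}^r\psi_i(Q_i(\xi)),
\]
which reduces everything to Weil-type character sums over $\IF_q$.

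The main term is the contribution of the single tuple $(\chi_0,\dots,\chi_0)$, for which $\sum_{\xi}\prod_i\chi_0(Q_i(\xi))=q-O(\sum_i\deg Q_i)$ (we only discard the $\xi$ at which some $Q_i$ vanishes). Its coefficient is $\prod_i\alpha_i(\chi_0)$, and computing $\alpha_i(\chi_0)$ is the heart of the matter. Since $\chi\lambda=\chi_0$ forces $\chi=\lambda^{-1}$, one is left with an inner sum $\sum_{\ord(\lambda)\mid\gcd(d,(q-1)/\delta)}\overline{\lambda(\omega^{j_i})}$ (weighted by the Möbius factors $\mu(e_i/\delta)\delta$ coming from the order indicator), which evaluates to $\gcd(d,(q-1)/\delta)$ or to $0$ according to whether $\gcd(d,(q-1)/\delta)\mid j_i$. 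Using $\gcd(j_i,d)=d_i$ to simplify this divisibility test and reorganizing the resulting divisor sum, I would obtain $\alpha_i(\chi_0)=c_i\cdot\frac{\phi(e_i)}{q-1}$ with $c_i=\frac1d\sum_{z\mid d}\frac{\mu(z/\gcd(z,d_i))^2\phi(z)}{\phi(z/\gcd(z,d_i))^2}$, so that $\prod_i\alpha_i(\chi_0)=c\cdot\frac{\prod_i\phi(e_i)}{(q-1)^r}$; the bounds $c\in[1/d^r,1]$ follow by checking that each factor $c_i$ lies in $[1/d,1]$.

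For every other tuple, at least one $\psi_i\neq\chi_0$; writing $\psi_i=\chi_1^{a_i}$ for a fixed generator $\chi_1$ of the character group, one has $\prod_i\psi_i(Q_i(\xi))=\chi_1\big(\prod_iQ_i(\xi)^{a_i}\big)$, and because the $Q_i$ are squarefree and pairwise coprime, the argument $\prod_iQ_i^{a_i}$ is not a perfect $(q-1)$-st power unless all $a_i\equiv0$, i.e.\ unless all $\psi_i=\chi_0$. Hence Lemma~\ref{davenportLem} (Weil) applies to each such inner sum and bounds it by $(\sum_i\deg Q_i)\sqrt q$. It remains to control the accumulated coefficient weight: since $\alpha_i$ is the convolution of the order-indicator coefficients $\beta_i$ with the discrete-logarithm coefficients, its total weight factors as
\[
\sum_{\psi}|\alpha_i(\psi)|\le\Big(\sum_{\chi}|\beta_i(\chi)|\Big)\Big(\sum_{\ord(\lambda)\mid d}\tfrac1d\Big)\le\tau(e_i)\cdot1,
\]
where $\tau$ is the number-of-divisors function; this gives $\sum_{(\psi_1,\dots,\psi_r)}\prod_i|\alpha_i(\psi_i)|\le\prod_i\tau(e_i)=O(q^{\epsilon})$, and hence an error of $O\big((\sum_i\deg Q_i)\,q^{1/2+\epsilon}\big)$ after rescaling $\epsilon$.

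The hard part will be the exact evaluation of the main-term coefficient $\alpha_i(\chi_0)$: one must carry out the character sum over $\lambda$, convert the divisibility test $\gcd(d,(q-1)/\delta)\mid j_i$ into the clean condition furnished by $\gcd(j_i,d)=d_i$, and massage the Möbius/totient divisor sum into the stated closed form for $c$, all while verifying that $c$ is genuinely independent of $q$ (the apparent $q$-dependence entering through $\gcd(d,(q-1)/\delta)$ must cancel). A secondary but essential point is keeping the number of contributing characters, and hence the accumulated coefficient weight, at size $q^{\epsilon}$ rather than a positive power of $q$, since this is precisely what guarantees that the main term dominates the error.
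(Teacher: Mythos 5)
Your proposal is correct and follows essentially the same route as the paper's proof: expand the product of the exact-order indicator (Lemma \ref{carlitzLem}) and the discrete-logarithm indicator (Lemma \ref{discLogLem}) into multiplicative character sums, isolate the all-principal tuple as the main term, and bound everything else by the Weil estimate combined with a coefficient-mass bound of size $\tau(q-1)^r=O(q^{\epsilon})$. The only differences are organizational: the paper expands using the reorganized form in Lemma \ref{carlitzLem}(2) and evaluates $\sum_{\ord(\psi_i)=z_i}\psi_i(\omega)^{-j_i}$ via the primitive-root-of-unity M{\"o}bius sum (Lemma \ref{primRootSumLem}), so that your flagged \enquote{hard part} -- the closed form for $c$, its $q$-independence, and the reduction of the divisibility test through $\gcd(j_i,d)=d_i$ -- falls out directly, and it treats tuples with some principal components via Lemma \ref{davenportPlusLem} rather than your single-character product-polynomial formulation of Weil's bound, which needs the same bookkeeping at zeros of the $Q_i$.
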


The proof of Theorem \ref{carlitzGenTheo} is an adaptation of Carlitz's original proof of Theorem \ref{carlitzTheo}. We will need a few lemmas. Let $q$ be a prime power. Recall that a \emph{(multiplicative) character of $\IF_q$} is a function $\chi:\IF_q\rightarrow\IC$ with $\chi(0)=0$ such that the restriction $\chi_{\mid\IF_q^{\ast}}$ is a group homomorphism $\IF_q^{\ast}\rightarrow\IC^{\ast}$. The characters of $\IF_q$ form a cyclic group of order $q-1$ under the character multiplication $\cdot$ defined via $(\chi_1\cdot\chi_2)(\xi):=\chi_1(\xi)\cdot\chi_2(\xi)$ for $\xi\in\IF_q$. The neutral element of this group is the so-called \emph{principal character of $\IF_q$}, denoted by $\chi_0$. It can also be defined by the formula
\[
\chi_0(\xi)=\begin{cases}0, & \text{if }\xi=0, \\ 1, & \text{otherwise}.\end{cases}
\]
For the proof of his theorem, Carlitz introduced the following character sum:

\begin{lemma}\label{carlitzLem}(Carlitz, see \cite[Lemmas 1 and 2]{Car56a})
Let $q$ be a prime power. For a divisor $e$ of $q-1$, consider the character sum
\[
f_e:\IF_q\rightarrow\IC,\xi\mapsto\frac{e}{q-1}\sum_{d\mid e}{\frac{\mu(d)}{d}\sum_{\ord(\chi)\mid d(q-1)/e}{\chi(\xi)}}.
\]
Then the following hold:
\begin{enumerate}
\item For all $\xi\in\IF_q$, we have
\[
f_e(\xi)=\begin{cases}1, & \text{if }\xi\not=0\text{ and the multiplicative order of }\xi\text{ is }e, \\ 0, & \text{otherwise}.\end{cases}
\]
\item The function $f_e$ can also be expressed by the formula
\[
f_e(\xi)=\frac{\phi(e)}{q-1}\sum_{z\mid q-1}{\frac{\mu(\frac{z}{\gcd(z,(q-1)/e)})}{\phi(\frac{z}{\gcd(z,(q-1)/e)})}}\sum_{\ord(\chi)=z}{\chi(\xi)}.
\]
for $\xi\in\IF_q$.
\end{enumerate}
\end{lemma}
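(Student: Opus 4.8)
The plan is to fix a generator $g$ of the cyclic group $\IF_q^{\ast}$, to abbreviate $n:=q-1$, and to reduce both statements to the orthogonality relations for the multiplicative characters of $\IF_q$. The key input is that for a divisor $m$ of $n$, a character $\chi$ satisfies $\ord(\chi)\mid m$ if and only if $\chi$ is trivial on the subgroup $(\IF_q^{\ast})^m$ of $m$-th powers, a subgroup of index $m$ consisting precisely of the elements of order dividing $n/m$. Orthogonality of the characters of the quotient $\IF_q^{\ast}/(\IF_q^{\ast})^m$ therefore gives, for every $\xi\in\IF_q$,
\[
\sum_{\ord(\chi)\mid m}\chi(\xi)=\begin{cases}m, & \text{if }\xi\not=0\text{ and }\ord(\xi)\mid n/m,\\ 0, & \text{otherwise}\end{cases}
\]
(the value at $\xi=0$ being $0$ since every character vanishes there).

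For part (1) I would specialize this to $m=d(q-1)/e=dn/e$, which divides $n$ because $d\mid e\mid n$, so that the condition $\ord(\xi)\mid n/m$ reads $\ord(\xi)\mid e/d$. Substituting into the definition of $f_e$, the prefactors collapse, $\frac{e}{n}\cdot\frac{\mu(d)}{d}\cdot\frac{dn}{e}=\mu(d)$, and one is left (for $\xi\not=0$) with
\[
f_e(\xi)=\sum_{d\mid e}\mu(d)\,[\ord(\xi)\mid e/d],
\]
where $[\,\cdot\,]$ is $1$ or $0$ according as the displayed condition holds or not. Writing $o:=\ord(\xi)$, the bracket forces $d\mid e/o$ (and the whole sum is empty, hence $0$, if $o\nmid e$), so the sum equals $\sum_{d\mid e/o}\mu(d)$, which is $1$ when $e/o=1$ and $0$ otherwise. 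This is precisely the assertion that $f_e$ is the indicator function of the set of elements of order $e$.

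For part (2) the cleanest route is not to re-derive the indicator, but to transform the double sum in the definition of $f_e$ into a single sum indexed by the \emph{exact} character order $z$. Splitting $\sum_{\ord(\chi)\mid dn/e}\chi(\xi)=\sum_{z\mid dn/e}\sum_{\ord(\chi)=z}\chi(\xi)$ and interchanging the two summations yields
\[
f_e(\xi)=\frac{e}{n}\sum_{z\mid n}\Bigl(\sum_{\substack{d\mid e\\ z\mid dn/e}}\frac{\mu(d)}{d}\Bigr)\sum_{\ord(\chi)=z}\chi(\xi),
\]
so it remains to evaluate the inner coefficient for each divisor $z$ of $n$. Setting $u:=z/\gcd(z,n/e)$, one first records the two elementary reductions that $z\mid dn/e$ is equivalent to $u\mid d$, and that $u\mid e$ holds automatically (a prime-by-prime comparison of exponents, using $z\mid n$).

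The main obstacle is then the arithmetic identity
\[
\sum_{\substack{d\mid e\\ u\mid d}}\frac{\mu(d)}{d}=\frac{\phi(e)}{e}\cdot\frac{\mu(u)}{\phi(u)}.
\]
I would prove it by writing $d=ud'$ with $d'\mid e/u$: a nonzero contribution forces $u$ to be squarefree and $\gcd(d',u)=1$ (both sides vanishing when $u$ is not squarefree, since then $\mu(u)=0$), whereupon the left-hand side factors as $\frac{\mu(u)}{u}\sum_{\substack{d'\mid e/u\\\gcd(d',u)=1}}\frac{\mu(d')}{d'}=\frac{\mu(u)}{u}\prod_{p\mid e,\,p\nmid u}\bigl(1-\tfrac1p\bigr)$. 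Recognizing the product as $\frac{\phi(e)/e}{\phi(u)/u}$ (here one must check that the primes dividing $e/u$ but not $u$ are exactly those dividing $e$ but not $u$) gives the claimed value. Multiplying the resulting coefficient by $\frac{e}{n}$ produces $\frac{\phi(e)}{q-1}\cdot\frac{\mu(u)}{\phi(u)}$, which is exactly the coefficient of $\sum_{\ord(\chi)=z}\chi(\xi)$ in the stated formula, completing part (2). The only genuinely delicate point is the bookkeeping of local Euler factors in this last identity; everything else is a direct application of orthogonality and Möbius inversion.
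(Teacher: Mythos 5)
Your proof is correct. Note first that the paper itself contains no proof of this lemma: it is quoted directly from Carlitz (Lemmas 1 and 2 of the cited 1956 paper), so there is no in-paper argument to compare yours against. On its own merits, your derivation is a complete and valid reconstruction of the standard argument. For part (1), the orthogonality relation for the characters of $\IF_q^{\ast}/(\IF_q^{\ast})^m$ (equivalently, for the characters of order dividing $m$, with $m=d(q-1)/e\mid q-1$), the collapse of the prefactor $\frac{e}{q-1}\cdot\frac{\mu(d)}{d}\cdot\frac{d(q-1)}{e}=\mu(d)$, and the final M\"obius sum $\sum_{d\mid e/o}\mu(d)=[e/o=1]$ all check out, including the zero value at $\xi=0$ and the case $o\nmid e$. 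For part (2), the interchange of summation, the equivalence $z\mid d(q-1)/e\iff u\mid d$ with $u:=z/\gcd(z,(q-1)/e)$, the verification that $u\mid e$ (prime-by-prime, using $z\mid q-1$), and the key identity $\sum_{d\mid e,\ u\mid d}\mu(d)/d=\frac{\phi(e)}{e}\cdot\frac{\mu(u)}{\phi(u)}$ (via $d=ud'$ and matching of the Euler factors over primes dividing $e$ but not $u$) are all correct, and multiplying by $\frac{e}{q-1}$ gives exactly the stated coefficient. The two delicate points you flag are precisely the right ones to check, and both hold.
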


Another crucial ingredient of Carlitz's proof is the following lemma:

\begin{lemma}\label{davenportLem}(Davenport and Weil, see \cite[Formula (6) on p.~100]{Dav39a} and \cite{Wei41a}, and also \cite[Lemma 3 and the remarks thereafter]{Car56a})
Let $q$ be a prime power. Let $\chi_1,\ldots,\chi_r$ be non-principal characters of $\IF_q$, and let $Q_1,\ldots,Q_r\in\IF_q[T]$ be non-constant, pairwise coprime squarefree polynomials. Set $\vec{Q}:=(Q_1,\ldots,Q_r)$, $\vec{\chi}:=(\chi_1,\ldots,\chi_r)$, and
\[
S(\vec{Q},\vec{\chi}):=\sum_{\xi\in\IF_q}{\chi_1(Q_1(\xi))\cdots\chi_r(Q_r(\xi))}.
\]
Then
\[
|S(\vec{Q},\vec{\chi})|\leq(\deg{Q_1}+\cdots+\deg{Q_r}-1)q^{1/2}.
\]
\end{lemma}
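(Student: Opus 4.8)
The plan is to collapse the product of several distinct characters into a single multiplicative character applied to one polynomial, and then to invoke the Weil bound for a single multiplicative character sum (the deep input, coming from the Riemann hypothesis for curves over finite fields). First I would fix a generator $\chi$ of the cyclic group of characters of $\IF_q$, so that $\chi$ has order $q-1$; since each $\chi_i$ is non-principal, I can write $\chi_i=\chi^{a_i}$ with $a_i\in\{1,2,\ldots,q-2\}$. Setting $F:=\prod_{i=1}^r{Q_i^{a_i}}\in\IF_q[T]$, I would then check the pointwise identity
\[
\chi_1(Q_1(\xi))\cdots\chi_r(Q_r(\xi))=\chi(F(\xi))\qquad\text{for all }\xi\in\IF_q.
\]
When all $Q_i(\xi)\not=0$ this is just multiplicativity of $\chi$ together with $\chi^{a_i}=\chi_i$; when some $Q_i(\xi)=0$ both sides vanish, because characters are defined to be $0$ at $0$ and then $F(\xi)=0$ as well. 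Hence $S(\vec{Q},\vec{\chi})=\sum_{\xi\in\IF_q}{\chi(F(\xi))}$, a sum of a single multiplicative character over the values of a single polynomial.

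Next I would verify the two hypotheses that the Weil bound requires of $F$, using that the $Q_i$ are squarefree and pairwise coprime. Because each $Q_i$ is squarefree (hence has distinct roots) and the $Q_i$ share no common root, the distinct roots of $F$ are exactly the union of the root sets of the $Q_i$; thus $F$ has precisely $\sum_{i=1}^r{\deg{Q_i}}$ distinct roots, and in the factorization of $F$ the roots coming from $Q_i$ all carry exponent $a_i$. Since $1\leq a_i\leq q-2$, no exponent is divisible by $q-1$, so $F$ is not of the form $c\cdot G(T)^{q-1}$ for any constant $c$ and polynomial $G$. This is exactly the nondegeneracy condition needed to apply the Weil bound with the character $\chi$ of order $q-1$.

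Finally I would apply the Weil bound for multiplicative character sums in its standard form: for a multiplicative character $\chi$ of order $m>1$ and a polynomial $F$ with $s$ distinct roots that is not a constant multiple of an $m$-th power of a polynomial, one has $|\sum_{\xi\in\IF_q}{\chi(F(\xi))}|\leq(s-1)q^{1/2}$ (a nonzero leading coefficient of $F$ only multiplies the whole sum by a root of unity, so it is harmless). Taking $m=q-1$ and $s=\sum_{i=1}^r{\deg{Q_i}}$ yields exactly $|S(\vec{Q},\vec{\chi})|\leq(\deg{Q_1}+\cdots+\deg{Q_r}-1)q^{1/2}$, as claimed. (The degenerate case $q=2$ is vacuous, since then $\IF_q^{\ast}$ is trivial and there are no non-principal characters.)

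The genuine obstacle is the Weil bound itself, whose proof rests on the Riemann hypothesis for the function field obtained by adjoining an $m$-th root of $F(T)$; in the present write-up I would treat it as the cited black box of Davenport and Weil, since the reduction above is elementary once that estimate is granted. It is worth noting that Davenport's original formula gives only a weaker power-saving over prime fields, and it is precisely Weil's theorem that upgrades the error term to the clean $q^{1/2}$ shape recorded here.
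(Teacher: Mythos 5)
Your proposal is correct, and it is worth noting that the paper itself offers no proof of this lemma: it is quoted as an external result of Davenport, upgraded by Weil, as recorded in Carlitz's Lemma 3 and the remarks following it. What you have supplied is the standard reduction that those citations leave implicit, and every step of it checks out. Choosing a generator $\chi$ of the cyclic order-$(q-1)$ character group, writing $\chi_i=\chi^{a_i}$ with $1\leq a_i\leq q-2$, and collapsing the mixed sum to $\sum_{\xi\in\IF_q}{\chi(F(\xi))}$ with $F=\prod_{i=1}^r{Q_i^{a_i}}$ is valid pointwise precisely because of the convention $\chi(0)=0$ used throughout the paper, and you verify both the nonvanishing and the vanishing case. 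Your verification of the hypotheses of the one-character Weil bound (in the form of Lidl--Niederreiter, Theorem 5.41) is exactly where the lemma's assumptions do their work, and you isolate this correctly: squarefreeness together with pairwise coprimality forces the distinct roots of $F$ to be exactly $\sum_{i=1}^r{\deg{Q_i}}$ in number, each carrying exponent $a_i\not\equiv0\Mod{q-1}$, so $F$ cannot be a constant times a $(q-1)$-st power; the leading-coefficient remark (the sum is only multiplied by the root of unity $\chi(c)$) and the vacuous case $q=2$ are also handled properly. Compared with the paper's approach, yours buys a self-contained derivation modulo only the single-character Weil estimate and makes visible \emph{why} coprimality and squarefreeness are needed, at the cost of some length; the paper's bare citation buys brevity and attribution. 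Your closing historical remark is accurate too: Davenport's formula (6) gives only a weaker power saving, and it is Weil's theorem that yields the clean bound $(\deg{Q_1}+\cdots+\deg{Q_r}-1)q^{1/2}$ stated here.
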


Actually, Carlitz used a slightly stronger version of Lemma \ref{davenportLem}, which he did not formulate explicitly. We state this result below as Lemma \ref{davenportPlusLem}. Before doing so, we introduce the following notation:

\begin{notation}\label{nullNot}
Let $K$ be a field, and let $P\in K[T]$ be a univariate polynomial over $K$. We denote by $\Null(P)=\Null_K(P)$ the set of all roots of $P$ in $K$. That is,
\[
\Null(P)=\{\xi\in K: P(\xi)=0\}.
\]
\end{notation}

We note that whenever we will use the notation $\Null(P)$ in this paper, we will have $K=\IF_q$.

\begin{lemma}\label{davenportPlusLem}
Let $q$ be a prime power, let $\chi_1,\ldots,\chi_r$ be characters of $\IF_q$, and let $Q_1,\ldots,Q_r\in\IF_q[T]$ be non-constant, pairwise coprime squarefree polynomials. Let $\vec{Q}$, $\vec{\chi}$ and $S(\vec{Q},\vec{\chi})$ be as in Lemma \ref{davenportLem}. The following hold:
\begin{enumerate}
\item If at least one of the $\chi_i$ is non-principal, then
\[
|S(\vec{Q},\vec{\chi})|\leq(\deg{Q_1}+\cdots+\deg{Q_r}-1)q^{\frac{1}{2}}.
\]
\item If all the $\chi_i$ are principal, then
\[
S(\vec{Q},\vec{\chi})=q-|\Null(Q_1Q_2\cdots Q_r)|\geq q-\sum_{i=1}^r{\deg{Q_i}}.
\]
\end{enumerate}
\end{lemma}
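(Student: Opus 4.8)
The plan is to dispatch the two parts separately: part (2) by a direct counting argument, and part (1) by peeling off the principal characters so that what remains is exactly the situation already covered by Lemma \ref{davenportLem}.

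For part (2), I would use that when each $\chi_i$ is the principal character $\chi_0$, the factor $\chi_i(Q_i(\xi))$ equals $1$ when $Q_i(\xi)\neq0$ and $0$ otherwise. Hence the product $\chi_1(Q_1(\xi))\cdots\chi_r(Q_r(\xi))$ is simply the indicator of the event that $\xi$ is a root of none of the $Q_i$, that is, that $\xi\notin\Null(Q_1\cdots Q_r)$. Summing over $\xi\in\IF_q$ gives $S(\vec{Q},\vec{\chi})=q-|\Null(Q_1\cdots Q_r)|$ at once, and since $Q_1\cdots Q_r$ is a nonzero polynomial of degree $\sum_{i=1}^r\deg{Q_i}$ it has at most that many roots, which yields the asserted lower bound.

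For part (1), the difficulty is that Lemma \ref{davenportLem} demands every character be non-principal, while here only one of them is assumed to be. After relabeling I would assume $\chi_1,\ldots,\chi_s$ are non-principal and $\chi_{s+1},\ldots,\chi_r$ are principal, with $1\leq s\leq r$. As in part (2), the principal factors collectively form the indicator of $\xi\notin T$, where $T:=\Null(Q_{s+1}\cdots Q_r)$. Writing the resulting restricted sum as a full sum minus the sum over $T$, one obtains
\[
S(\vec{Q},\vec{\chi})=\sum_{\xi\in\IF_q}\chi_1(Q_1(\xi))\cdots\chi_s(Q_s(\xi))-\sum_{\xi\in T}\chi_1(Q_1(\xi))\cdots\chi_s(Q_s(\xi)).
\]
The first sum is $S((Q_1,\ldots,Q_s),(\chi_1,\ldots,\chi_s))$, in which all characters are non-principal and the polynomials $Q_1,\ldots,Q_s$ are still non-constant, pairwise coprime and squarefree, so Lemma \ref{davenportLem} bounds its modulus by $(\deg{Q_1}+\cdots+\deg{Q_s}-1)q^{\frac{1}{2}}$. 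Every character value lies in the closed unit disc, so each summand of the second sum has modulus at most $1$, and the whole correction term is bounded by $|T|=|\Null(Q_{s+1}\cdots Q_r)|\leq\deg{Q_{s+1}}+\cdots+\deg{Q_r}$.

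I expect the only real obstacle to be checking that the resulting estimate
\[
|S(\vec{Q},\vec{\chi})|\leq(\deg{Q_1}+\cdots+\deg{Q_s}-1)q^{\frac{1}{2}}+(\deg{Q_{s+1}}+\cdots+\deg{Q_r})
\]
does not exceed the target $(\deg{Q_1}+\cdots+\deg{Q_r}-1)q^{\frac{1}{2}}$. The gap between the two equals $(\deg{Q_{s+1}}+\cdots+\deg{Q_r})(q^{\frac{1}{2}}-1)$, which is nonnegative because every prime power satisfies $q\geq2$ and hence $q^{\frac{1}{2}}\geq1$; this is precisely the point at which it matters that the correction term carries a factor of $1$ rather than $q^{\frac{1}{2}}$. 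The boundary case $s=r$ has an empty correction term and simply reproduces Lemma \ref{davenportLem}.
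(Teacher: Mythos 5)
Your proof is correct and follows essentially the same route as the paper: part (2) by recognizing the product of principal characters as the indicator of $\xi\notin\Null(Q_1\cdots Q_r)$, and part (1) by splitting off the principal characters, applying Lemma \ref{davenportLem} to the non-principal block, and absorbing the correction term of size $|\Null(Q_{s+1}\cdots Q_r)|\leq\deg Q_{s+1}+\cdots+\deg Q_r$ using $q^{\frac{1}{2}}\geq1$. The paper phrases that last absorption by multiplying the correction term by $q^{\frac{1}{2}}$ rather than checking the difference is nonnegative, but this is the identical estimate.
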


\begin{proof}
For statement (1): Assume without loss of generality that $\chi_1,\ldots,\chi_s$, with $s\in\{1,2,\ldots,r\}$, are non-principal and $\chi_{s+1},\ldots,\chi_r$ are principal. Since the principal character $\chi_0$ of $\IF_q$ is constantly $1$ on $\IF_q^{\ast}$ but has the value $0$ in $0_{\IF_q}$, we have that
\begin{align*}
&\chi_1(Q_1(\xi))\cdots\chi_r(Q_r(\xi)) \\
&=\begin{cases}\chi_1(Q_1(\xi))\cdots\chi_s(Q_s(\xi)), & \text{if }\xi\notin\bigcup_{i=s+1}^r{\Null(Q_i)}=\Null(Q_{s+1}\cdots Q_r), \\ 0, & \text{otherwise}.\end{cases}
\end{align*}
Hence
\begin{align*}
|S(\vec{Q},\vec{\chi})| &=\left|\sum_{\xi\in\IF_q}{\chi_1(Q_1(\xi))\cdots\chi_s(Q_s(\xi))} - \sum_{\xi\in\Null(Q_{s+1}\cdots Q_r)}{\chi_1(Q_1(\xi))\cdots\chi_s(Q_s(\xi))}\right| \\
&\leq\left|\sum_{\xi\in\IF_q}{\chi_1(Q_1(\xi))\cdots\chi_s(Q_s(\xi))}\right|+\left|\sum_{\xi\in\Null(Q_{s+1}\cdots Q_r)}{\chi_1(Q_1(\xi))\cdots\chi_s(Q_s(\xi))}\right| \\
&\leq(\deg{Q_1}+\cdots+\deg{Q_s}-1)q^{\frac{1}{2}}+|\Null(Q_{s+1}\cdots Q_r)| \\
&\leq(\deg{Q_1}+\cdots+\deg{Q_s}-1)q^{\frac{1}{2}}+\deg(Q_{s+1}\cdots Q_r) \\
&=(\deg{Q_1}+\cdots+\deg{Q_s}-1)q^{\frac{1}{2}}+\deg{Q_{s+1}}+\cdots+\deg{Q_r} \\
&\leq(\deg{Q_1}+\cdots+\deg{Q_s}-1)q^{\frac{1}{2}}+q^{\frac{1}{2}}\cdot(\deg{Q_{s+1}}+\cdots+\deg{Q_r}) \\
&=(\deg{Q_1}+\cdots+\deg{Q_r}-1)q^{\frac{1}{2}}
\end{align*}
For statement (2): This follows from observing that
\[
\chi_1(Q_1(\xi))\cdots\chi_r(Q_r(\xi))=\begin{cases}1, & \text{if }\xi\notin\bigcup_{i=1}^r{\Null(Q_i)}=\Null(Q_1\cdots Q_r), \\ 0, & \text{otherwise}.\end{cases}
\]
\end{proof}

Since Theorem \ref{carlitzGenTheo} not only deals with multiplicative orders, but also with residue classes of discrete logarithms, we will also need the following character sums:

\begin{lemma}\label{discLogLem}
Let $q$ be a prime power, let $\omega$ be a primitive root of $\IF_q$, let $d$ be a divisor of $q-1$, and let $i\in\IZ$. For $\xi\in\IF_q$, consider the character sum
\[
g_{d,i}(\xi):=\frac{1}{d}\sum_{\ord(\chi)\mid d}{\chi(\omega)^{-i}\chi(\xi)},
\]
where the sum ranges over the multiplicative characters $\chi$ of $\IF_q$ whose order divides $d$. We have
\[
g_{d,i}(\xi)=\begin{cases}1, & \text{if }\xi\not=0\text{ and }\log_{\omega}{\xi}\equiv i\Mod{d}, \\ 0, & \text{otherwise}.\end{cases}
\]
\end{lemma}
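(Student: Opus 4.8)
The plan is to reduce the given character sum to a geometric sum of $d$-th roots of unity and then invoke the orthogonality relation for roots of unity. First I would dispose of the case $\xi=0$ immediately: by definition every multiplicative character of $\IF_q$ vanishes at $0$, so each summand $\chi(\omega)^{-i}\chi(\xi)$ is zero and hence $g_{d,i}(0)=0$, which matches the claimed value.

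For $\xi\not=0$, the key structural point is the description of the index set of the sum. Since the group of multiplicative characters of $\IF_q$ is cyclic of order $q-1$ and $d$ divides $q-1$, the characters $\chi$ with $\ord(\chi)\mid d$ form a cyclic subgroup of order exactly $d$. I would fix a generator $\psi$ of this subgroup; then $\psi(\omega)$ is a primitive $d$-th root of unity, which I denote by $\zeta$, and the characters appearing in the sum are precisely $\psi^0,\psi^1,\ldots,\psi^{d-1}$. Writing $e:=\log_{\omega}\xi$, so that $\xi=\omega^e$ and therefore $\psi^m(\xi)=\psi^m(\omega)^e=\zeta^{me}$, while $\psi^m(\omega)^{-i}=\zeta^{-mi}$, the sum collapses to
\[
g_{d,i}(\xi)=\frac{1}{d}\sum_{m=0}^{d-1}{\zeta^{m(e-i)}}.
\]

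The final step is the standard orthogonality relation: the geometric sum $\sum_{m=0}^{d-1}\zeta^{m(e-i)}$ equals $d$ when $\zeta^{e-i}=1$, i.e.\ when $e\equiv i\Mod{d}$, and equals $0$ otherwise, since in the latter case $\zeta^{e-i}\not=1$ and the geometric series sums to $(\zeta^{d(e-i)}-1)/(\zeta^{e-i}-1)=0$ (using $\zeta^d=1$ in the numerator). Dividing by $d$ gives the value $1$ precisely when $\log_{\omega}\xi\equiv i\Mod{d}$ and $0$ otherwise, as required.

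I do not anticipate any genuine obstacle. The only point requiring a little care is the identification of the summation set $\{\chi:\ord(\chi)\mid d\}$ with the cyclic group $\langle\psi\rangle$ of order $d$, which rests on the cyclicity of the character group together with the hypothesis $d\mid q-1$ (guaranteeing that a character of order exactly $d$, and hence a primitive $d$-th root of unity $\psi(\omega)$, actually exists). Everything else is the routine evaluation of a geometric sum, so the lemma is in essence a repackaging of the orthogonality of characters on the cyclic quotient $\IF_q^{\ast}/C$, where $C$ is the index $d$ subgroup of $\IF_q^{\ast}$.
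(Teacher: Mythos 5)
Your proof is correct and takes essentially the same approach as the paper's: both reduce $g_{d,i}(\xi)$ to a geometric sum of $d$-th roots of unity and evaluate it by the geometric series (orthogonality) relation, after disposing of $\xi=0$ via the vanishing of characters at $0$. The only cosmetic difference is that you pick a generator of the subgroup of characters of order dividing $d$ directly, whereas the paper writes these characters as the powers $\psi^{j(q-1)/d}$ of a generator $\psi$ of the full character group; the computations are identical.
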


\begin{proof}
First of all, since the function values of multiplicative characters of $\IF_q$ at $0_{\IF_q}$ are always $0_{\IC}$, it is clear that $g_{d,i}(0)=0$, whence we may assume that $\xi\not=0$. We denote by $\psi$ some fixed generator of the multiplicative character group of $\IF_q$. Note that for each $\alpha\in\IF_q^{\ast}$, the character value $\psi(\alpha)$ is some complex root of unity of the same order as $\alpha$.

If $\log_{\omega}{\xi}\equiv i\Mod{d}$, then $\omega^{-i}\xi$ lies in the index $d$ subgroup of $\IF_q^{\ast}$, i.e., it is some nonzero $d$-th power $\nu^d$ in $\IF_q$. If $\chi$ is a multiplicative character of $\IF_q$ whose order divides $d$, then $\chi=\psi^e$ for some integer $e$ that is divisible by $\frac{q-1}{d}$. Consequently,
\[
\chi(\omega^{-1}\xi)=\psi^e(\omega^{-1}\xi)=\psi^e(\nu^d)=\psi^{de}(\nu)=\xi_0(\nu)=1.
\]
It follows that
\[
g_{d,i}(\xi)=\frac{1}{d}\sum_{\ord(\chi)\mid d}{\chi(\omega^{-i}\xi)}=\frac{1}{d}\sum_{\ord(\chi)\mid d}{1}=\frac{1}{d}d=1.
\]

Now assume that $\log_{\omega}{\xi}\not\equiv i\Mod{d}$. The multiplicative characters of $\IF_q$ whose order divides $d$ are of the form $\psi^{j\frac{q-1}{d}}$ with $j=0,1,\ldots,d-1$. Hence we can write
\[
g_{d,i}(\xi)=\frac{1}{d}\sum_{j=0}^{d-1}{\psi^{j\frac{q-1}{d}}(\omega^{-i}\xi)}=\frac{1}{d}\sum_{j=0}^{d-1}{\psi(\omega^{-i}\xi)^{j\frac{q-1}{d}}}.
\]
Now, the multiplicative order of $\omega^{-1}\xi$ does not divide $\frac{q-1}{d}$, whence $\psi(\omega^{-1}\xi)^{\frac{q-1}{d}}\not=1$. We can thus apply the geometric series formula to conclude that
\[
g_{d,i}(\xi)=\frac{1}{d}\cdot\frac{\psi(\omega^{-1}\xi)^{d\frac{q-1}{d}}-1}{\psi(\omega^{-1}\xi)^{\frac{q-1}{d}}-1}=\frac{1}{d}\cdot\frac{1-1}{\psi(\omega^{-1}\xi)^{\frac{q-1}{d}}-1}=0,
\]
as required.
\end{proof}

The last lemma which we will need in order to prove Theorem \ref{carlitzGenTheo} is the following, which is well-known:

\begin{lemma}\label{primRootSumLem}
Let $n$ be a positive integer. The sum over all primitive $n$-th roots of unity in $\IC$ is $\mu(n)$.
\end{lemma}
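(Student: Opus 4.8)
The plan is to prove this classical identity by M\"obius inversion, using the elementary fact that the sum of \emph{all} $n$-th roots of unity vanishes unless $n=1$. Write $c(n)$ for the sum over the primitive $n$-th roots of unity in $\IC$, which is the quantity we wish to identify with $\mu(n)$.

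First I would record the auxiliary sum $\sum_{k=0}^{n-1}\zeta^k$, where $\zeta$ is a fixed primitive $n$-th root of unity. The powers $\zeta^0,\zeta^1,\ldots,\zeta^{n-1}$ range precisely over all $n$-th roots of unity, and by the geometric series formula this sum equals $1$ when $n=1$ and equals $0$ when $n>1$ (since then $\zeta\neq 1$ while $\zeta^n=1$). In other words, this sum equals the Kronecker delta $\delta_{n,1}$.

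Next I would partition the set of $n$-th roots of unity according to their exact multiplicative order. Every $n$-th root of unity has multiplicative order some divisor $d$ of $n$, and is then a primitive $d$-th root of unity; conversely, every primitive $d$-th root of unity with $d\mid n$ is an $n$-th root of unity, and these classes are disjoint. Grouping the auxiliary sum accordingly therefore yields
\[
\sum_{d\mid n}c(d)=\sum_{k=0}^{n-1}\zeta^k=\delta_{n,1}.
\]
Finally, I would invoke the standard identity $\sum_{d\mid n}\mu(d)=\delta_{n,1}$. Both $c$ and $\mu$ thus have the same summatory function $\delta_{n,1}$ over divisors, so by M\"obius inversion applied to the display they coincide: $c(n)=\sum_{d\mid n}\mu(n/d)\,\delta_{d,1}=\mu(n)$, since only the term $d=1$ survives. (Equivalently, one argues by strong induction on $n$ using the recursion $c(n)=\delta_{n,1}-\sum_{d\mid n,\,d<n}c(d)$ together with the analogous recursion for $\mu$.)

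There is no real obstacle here; the only point requiring a moment's care is the bookkeeping in the partition step -- verifying that the multiplicative order of an $n$-th root of unity is a divisor of $n$ and that each divisor's primitive roots are counted exactly once -- so that the divisor sum is set up correctly.
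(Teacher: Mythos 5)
Your proof is correct and complete. Note that the paper itself offers no proof of this lemma at all: it is stated as "well-known" and used as a black box in the proof of Theorem \ref{carlitzGenTheo}, so there is no argument to compare yours against. What you have written is the standard derivation: the sum of all $n$-th roots of unity equals $\delta_{n,1}$ by the geometric series, partitioning the $n$-th roots of unity by exact order gives $\sum_{d\mid n}c(d)=\delta_{n,1}$, and since $\mu$ has the same divisor-summatory function, M\"obius inversion (or strong induction on $n$) yields $c(n)=\mu(n)$. All the bookkeeping steps you flag — that the order of an $n$-th root of unity divides $n$, that the primitive $d$-th roots of unity for $d\mid n$ partition the $n$-th roots of unity — are handled correctly, so the argument would serve as a perfectly adequate proof had the authors chosen to include one.
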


\begin{proof}[Proof of Theorem \ref{carlitzGenTheo}]
By Lemmas \ref{carlitzLem}(1) and \ref{discLogLem}, the number of $\xi\in\IF_q$ we are interested in is
\[
\sum_{\xi\in\IF_q}{f_{\frac{q-1}{d_1}}(Q_1(\xi))\cdots f_{\frac{q-1}{d_r}}(Q_r(\xi))g_{d,j_1}(Q_1(\xi))\cdots g_{d,j_r}(Q_r(\xi))}.
\]
Using Lemma \ref{carlitzLem}(2) and the definition of $g_{d,i}(\xi)$ in Lemma \ref{discLogLem}, we can rewrite this into
\begin{equation}\label{rewrittenSumEq}
\frac{\prod_{i=1}^r{\phi(\frac{q-1}{d_i})}}{(q-1)^rd^r}\sum_{z_1,\ldots,z_r\mid q-1}{\prod_{i=1}^r{\frac{\mu(\frac{z_i}{\gcd(z_i,d_i)})}{\phi(\frac{z_i}{\gcd(z_i,d_i)})}}\sum_{\psi_1,\ldots,\psi_r: \atop \ord(\psi_i)\mid d}{\prod_{i=1}^r{\psi_i(\omega)^{-j_i}}\sum_{\chi_1,\ldots,\chi_r: \atop \ord(\chi_i)=z_i}{\overline{S}(\vec{Q},\vec{\chi},\vec{\psi})}}}
\end{equation}
where
\[
\overline{S}(\vec{Q},\vec{\chi},\vec{\psi}):=S((Q_1,\ldots,Q_r),(\chi_1\psi_1,\ldots,\chi_r\psi_r))=\sum_{\xi\in\IF_q}{\prod_{i=1}^r{(\chi_i\psi_i)(Q_i(\xi))}}.
\]
Observe that by Lemma \ref{davenportPlusLem}, we have an interesting dichotomy here: The complex number $\overline{S}(\vec{Q},\vec{\chi},\vec{\psi})$ is \enquote{large} (of the form $q-|\Null(\prod_{i=1}^r{Q_i})|$) if all the characters $\chi_i\psi_i$ for $i=1,2,\ldots,r$ are principal, and otherwise, it is \enquote{small} (of absolute value less than $\deg(Q_1\cdots Q_r)q^{1/2}$). Our strategy is to show that the former case accounts for the main term
\[
c\frac{\phi(\frac{q-1}{d_1})\cdots\phi(\frac{q-1}{d_r})}{(q-1)^r}q
\]
in our counting formula, whereas the latter case yields the error term
\[
O((\deg{Q_1}+\cdots+\deg{Q_r})q^{\frac{1}{2}+\epsilon}).
\]
We start by analyzing the case where $\overline{S}(\vec{Q},\vec{\chi},\vec{\psi})$ is large. Assume that the indices $z_1,\ldots,z_r\mid q-1$ of the outermost sum in formula (\ref{rewrittenSumEq}) are fixed. Then in the second-innermost sum, the orders of the $\chi_i$ are fixed (as the $z_i$). Now, $\chi_i\psi_i=\chi_0$ is equivalent to $\psi_i=\chi_i^{-1}$, a necessary condition for which is that $z_i=\ord(\chi_i)=\ord(\psi_i)\mid d$. Conversely, if the $\psi_i$ are chosen such that $\ord(\psi_i)=z_i\mid d$ for $i=1,2,\ldots,r$, then there is precisely one choice for the $\chi_i$ (namely $\chi_i:=\psi_i^{-1}$) such that the $\chi_i\psi_i$ are all principal. Applying Lemma \ref{davenportPlusLem}(2), we see that this corresponds to the subsum
\begin{equation}\label{mainTermEq}
\frac{\prod_{i=1}^r{\phi(\frac{q-1}{d_i})}}{(q-1)^rd^r}\sum_{z_1,\ldots,z_r\mid d}{\prod_{i=1}^r{\frac{\mu(\frac{z_i}{\gcd(z_i,d_i)})}{\phi(\frac{z_i}{\gcd(z_i,d_i)})}}}\sum_{\psi_1,\ldots,\psi_r:\ord(\psi_i)=z_i}{\prod_{i=1}^r{\psi_i(\omega)^{-j_i}}\left(q-|\Null(\prod_{i=1}^r{Q_i})|\right)}.
\end{equation}
In order to transform this further, we pull the constant factor $\left(q-|\Null(\prod_{i=1}^r{Q_i})|\right)$ out in front and observe that
\[
\sum_{\psi_1,\ldots,\psi_r:\ord(\psi_i)=z_i}{\prod_{i=1}^r{\psi_i(\omega)^{-j_i}}}=\prod_{i=1}^r{\sum_{\ord(\psi_i)=z_i}{\psi_i(\omega)^{-j_i}}}.
\]
Fix $i\in\{1,\ldots,r\}$. Note that as $\psi_i$ ranges over the multiplicative characters of $\IF_q$ of order $z_i$, the character value $\psi_i(\omega)$ ranges over the primitive $z_i$-th roots of unity in $\IC$. By our assumption that $\gcd(j_i,d)=d_i$ and the fact that $z_i\mid d$, we conclude that $\gcd(j_i,z_i)=\gcd(d_i,z_i)$. Hence, as $\psi_i$ ranges over the multiplicative characters of $\IF_q$ of order $z_i$, the power $\psi_i(\omega)^{-j_i}$ runs $\frac{\phi(z_i)}{\phi(\frac{z_i}{\gcd(z_i,d_i)})}$ times through the primitive $\frac{z_i}{\gcd(z_i,d_i)}$-th roots of unity in $\IC$. It follows by Lemma \ref{primRootSumLem} that
\[
\sum_{\ord(\psi_i)=z_i}{\psi_i(\omega)^{-j_i}}=\frac{\phi(z_i)}{\phi(\frac{z_i}{\gcd(z_i,d_i)})}\mu(\frac{z_i}{\gcd(z_i,d_i)}).
\]
Substituting this into formula (\ref{mainTermEq}), we get that the expression in that formula is equal to
\begin{align*}
&\frac{\prod_{i=1}^r{\phi(\frac{q-1}{d_i})}}{(q-1)^rd^r}\left(q-|\Null(\prod_{i=1}^r{Q_i})|\right)\sum_{z_1,\ldots,z_r\mid d}{\prod_{i=1}^r{\frac{\mu(\frac{z_i}{\gcd(z_i,d_i)})^2\phi(z_i)}{\phi(\frac{z_i}{\gcd(z_i,d_i)})^2}}} \\
&=\frac{\prod_{i=1}^r{\phi(\frac{q-1}{d_i})}}{(q-1)^rd^r}\left(q-|\Null(\prod_{i=1}^r{Q_i})|\right)\cdot d^rc = c\frac{\prod_{i=1}^r{\phi(\frac{q-1}{d_i})}}{(q-1)^r}q+O(\sum_{i=1}^r{\deg{Q_i}}).
\end{align*}

Now we consider those summands in formula (\ref{rewrittenSumEq}) where $\overline{S}(\vec{Q},\vec{\chi},\vec{\psi})$ is small. Using Lemma \ref{davenportPlusLem}(1), we see that the absolute value of the sum of all those summands is at most
\begin{align*}
&\frac{\prod_{i=1}^r{\phi(\frac{q-1}{d_i})}}{(q-1)^rd^r}\sum_{z_1,\ldots,z_r\mid q-1}{\prod_{i=1}^r{\frac{1}{\phi(\frac{z_i}{\gcd(z_i,d_i)})}}\sum_{\psi_1,\ldots,\psi_r: \atop \ord(\psi_i)\mid d}{\sum_{\chi_1,\ldots,\chi_r: \atop \ord(\chi_i)=z_i}{\left(\sum_{i=1}^r{\deg{Q_i}}\right)q^{1/2}}}} \\
&=\frac{\prod_{i=1}^r{\phi(\frac{q-1}{d_i})}}{(q-1)^rd^r}\left(\sum_{i=1}^r{\deg{Q_i}}\right)q^{1/2}\cdot\sum_{z_1,\ldots,z_r\mid q-1}{\frac{d^r\prod_{i=1}^r{\phi(z_i)}}{\prod_{i=1}^r{\phi(\frac{z_i}{\gcd(z_i,d_i)})}}} \\
&\leq\frac{\prod_{i=1}^r{\phi(\frac{q-1}{d_i})}}{(q-1)^r}\left(\sum_{i=1}^r{\deg{Q_i}}\right)q^{1/2}\sum_{z_1,\ldots,z_r\mid q-1}{\prod_{i=1}^r{\gcd(z_i,d_i)}} \\
&\leq\frac{\prod_{i=1}^r{\frac{q-1}{d_i}d_i}}{(q-1)^r}\left(\sum_{i=1}^r{\deg{Q_i}}\right)q^{1/2}\cdot\tau(q-1)^r=\left(\sum_{i=1}^r{\deg{Q_i}}\right)q^{1/2}\cdot\tau(q-1)^r \\
&\leq\left(\sum_{i=1}^r{\deg{Q_i}}\right)q^{\frac{1}{2}+\epsilon}.
\end{align*}
where the first inequality uses the bound $\phi(n\cdot c)\leq\phi(n)\cdot c$, valid for all positive integers $n$ and $c$, and the last inequality uses that the number of divisors function $\tau$ grows more slowly than any power function $n\mapsto n^{\epsilon}$.

It remains to prove the bounds $\frac{1}{d^r}\leq c\leq 1$, which are equivalent to
\begin{equation}\label{boundsEq}
1\leq\prod_{i=1}^r{\sum_{z_i\mid d}{\frac{\mu(\frac{z_i}{\gcd(z_i,d_i)})^2\phi(z_i)}{\phi(\frac{z_i}{\gcd(z_i,d_i)})^2}}}\leq d^r.
\end{equation}
For $i=1,2,\ldots,r$, define
\[
h_i:\IN^+\rightarrow\IQ, n\mapsto\sum_{z\mid n}{\frac{\mu(\frac{z}{\gcd(z,d_i)})^2\phi(z)}{\phi(\frac{z}{\gcd(z,d_i)})^2}}.
\]
Then formula (\ref{boundsEq}) simplifies to
\[
1\leq\prod_{i=1}^r{h_i(d)}\leq d^r,
\]
from which we see that it suffices to show that for each $i=1,2,\ldots,r$, we have $1\leq h_i(d)\leq d$. For the lower bound \enquote{$h_i(d)\geq 1$} observe that $h_i(d)$ is a sum of non-negative real numbers, whence it is bounded from below by its summand for $z=1$, which is
\[
\frac{\mu(\frac{1}{1})^2\phi(1)}{\phi(\frac{1}{1})^2}=\frac{1}{1}=1,
\]
as required. For the upper bound \enquote{$h_i(d)\leq d$}, note that $h_i$ is a multiplicative function, whence it suffices to show that $h_i(\ell^k)\leq\ell^k$ for each prime $\ell$ and each positive integer $k$. We do so in a case distinction:
\begin{enumerate}
\item Case: $\ell$ does not divide $d_i$. Then
\[
h_i(\ell^k)=\sum_{j=0}^k{\frac{\mu(\ell^j)^2\phi(\ell^j)}{\phi(\ell^j)^2}}=1+\frac{1}{\ell-1}\leq 2\leq\ell^k,
\]
where the last equality uses that all summands for $j>1$ are zero due to the M{\"o}bius function factor.
\item Case: $\ell$ divides $d_i$. Then
\begin{align*}
h_i(\ell^k) &=\sum_{j=0}^k{\frac{\mu(\ell^{\max(0,j-\nu_{\ell}(d_i))})^2\phi(\ell^j)}{\phi(\ell^{\max(0,j-\nu_{\ell}(d_i))})^2}} \\
&=\begin{cases}1+\phi(\ell)+\phi(\ell^2)+\cdots+\phi(\ell^{\nu_{\ell}(d_i)})+\frac{\phi(\ell^{\nu_{\ell}(d_i)+1})}{(\ell-1)^2}, & \text{if }k>\nu_{\ell}(d_i), \\ 1+\phi(\ell)+\phi(\ell^2)+\cdots+\phi(\ell^k), & \text{if }k\leq\nu_{\ell}(d_i).\end{cases}
\end{align*}
Either way, we have that $h_i(\ell^k)\leq\ell^k$: If $k>\nu_{\ell}(d_i)$, then
\begin{align*}
&1+\phi(\ell)+\phi(\ell^2)+\cdots+\phi(\ell^{\nu_{\ell}(d_i)})+\frac{\phi(\ell^{\nu_{\ell}(d_i)+1})}{(\ell-1)^2}=1+(\ell-1)\sum_{i=0}^{\nu_{\ell}(d_i)-1}{\ell^i}+\frac{\ell^{\nu_{\ell}(d_i)}}{\ell-1} \\
&=1+(\ell-1)\frac{\ell^{\nu_{\ell}(d_i)}-1}{\ell-1}+\frac{\ell^{\nu_{\ell}(d_i)}}{\ell-1}\leq\ell^{\nu_{\ell}(d_i)}+\ell^{\nu_{\ell}(d_i)}=2\ell^{\nu_{\ell}(d_i)}\leq\ell^{\nu_{\ell}(d_i)+1}\leq\ell^k,
\end{align*}
and if $k\leq\nu_{\ell}(d_i)$, then
\[
1+\phi(\ell)+\phi(\ell^2)+\cdots+\phi(\ell^k)=1+(\ell-1)\sum_{i=0}^{k-1}{\ell^i}=1+(\ell-1)\frac{\ell^k-1}{\ell-1}=\ell^k.
\]
\end{enumerate}
\end{proof}

We conclude this section by noting that Theorem \ref{carlitzGenTheo} implies the following, the proof of which is analogous to the one of Corollary \ref{carlitzCor}:

\begin{corollary}\label{carlitzGenCor}
Let $d_1,\ldots,d_r$ and $d$ be positive integers such that $\lcm(d_1,\ldots,d_r)$ divides $d$. Moreover, let $0<\delta<\frac{1}{2}$. There is a constant $C_2=C_2(d_1,\ldots,d_r,d,\delta)$ such that if
\begin{itemize}
\item $q$ is a prime power with $q\equiv1\Mod{d}$ and $q\geq C_2$,
\item $\omega$ is a primitive root of $\IF_q$,
\item $Q_1,\ldots,Q_r\in\IF_q[T]$ are non-constant, squarefree and pairwise coprime polynomials that satisfy $\sum_{i=1}^r{\deg{Q_i}}\leq q^{\delta}$, and
\item $j_1,\ldots,j_r$ are integers with $\gcd(j_i,d)=d_i$ for $i=1,2,\ldots,r$,
\end{itemize}
then there is an element $\xi\in\IF_q$ such that for $i=1,2,\ldots,r$, the following hold:
\begin{enumerate}
\item $Q_i(\xi)\not=0$;
\item the multiplicative order of $Q_i(\xi)$ is $\frac{q-1}{d_i}$;
\item $\log_{\omega}(Q_i(\xi))\equiv j_i\Mod{d}$.
\end{enumerate}
\end{corollary}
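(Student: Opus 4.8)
The plan is to follow the proof of Corollary~\ref{carlitzCor} essentially verbatim, replacing Theorem~\ref{carlitzTheo} by its strengthening Theorem~\ref{carlitzGenTheo}. Let $N$ denote the number of $\xi\in\IF_q$ satisfying conditions (1)--(3). Since the hypotheses $\lcm(d_1,\ldots,d_r)\mid d$ and $\gcd(j_i,d)=d_i$ are exactly those of Theorem~\ref{carlitzGenTheo}, that theorem applies and yields, for every $\epsilon>0$,
\[
N=c\,\frac{\phi(\frac{q-1}{d_1})\cdots\phi(\frac{q-1}{d_r})}{(q-1)^r}q+O\!\left(\Bigl(\sum_{i=1}^r\deg Q_i\Bigr)q^{\frac{1}{2}+\epsilon}\right),
\]
where $c=c(d_1,\ldots,d_r,d)\in[\,d^{-r},1\,]$ is a fixed positive constant. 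It therefore suffices to show that the main term strictly dominates the error term once $q$ is large enough, for then $N>0$ and the desired $\xi$ exists.

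Next I would bound the main term from below. Using $c\geq d^{-r}$ together with the standard fact that, for each fixed $\delta'>0$, the totient $\phi(n)$ eventually exceeds $n^{1-\delta'}$, one obtains for large $q$
\[
c\,\frac{\prod_{i=1}^r\phi(\frac{q-1}{d_i})}{(q-1)^r}q\;\geq\;\frac{1}{d^r}\cdot\frac{\prod_{i=1}^r(\frac{q-1}{d_i})^{1-\delta'}}{(q-1)^r}q\;\geq\;\frac{1}{d^r(d_1\cdots d_r)}\,q^{1-r\delta'},
\]
exactly as in Corollary~\ref{carlitzCor}, where the right-hand exponent can be brought arbitrarily close to $1$ by taking $\delta'$ small.

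Then I would bound the error term, and here the one genuine difference from Corollary~\ref{carlitzCor} appears: the total degree is no longer bounded by a constant but only by $q^\delta$. Substituting $\sum_{i=1}^r\deg Q_i\leq q^\delta$ gives an error of size $O(q^{\frac12+\delta+\epsilon})$. The main term $\asymp q^{1-r\delta'}$ beats this as soon as $1-r\delta'>\tfrac12+\delta+\epsilon$. This is where the hypothesis $\delta<\tfrac12$ is essential: it guarantees a positive gap $\eta:=\tfrac12-\delta>0$, so choosing e.g.\ $\epsilon:=\eta/3$ and $\delta':=\eta/(3r)$ makes $1-r\delta'-(\tfrac12+\delta+\epsilon)=\eta/3>0$. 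With these fixed choices of $\epsilon,\delta'$, the implied constants depend only on $d_1,\ldots,d_r,d,\delta$, so there is a threshold $C_2=C_2(d_1,\ldots,d_r,d,\delta)$ beyond which $N>0$.

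The main (and essentially only) obstacle is this last exponent bookkeeping: one must check that allowing the total degree to grow like $q^\delta$ is still compatible with the $q^{1/2}$-type square-root cancellation in the error term of Theorem~\ref{carlitzGenTheo}. The clean way to see this is to observe that $\delta<\tfrac12$ leaves exponent room $\tfrac12-\delta$ that can absorb both the totient loss $r\delta'$ and the slack $\epsilon$ simultaneously. Everything else is a direct transcription of the argument already given for Corollary~\ref{carlitzCor}.
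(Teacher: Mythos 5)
Your proposal is correct and is exactly the argument the paper intends: the paper gives no separate proof for Corollary~\ref{carlitzGenCor}, stating only that it is analogous to Corollary~\ref{carlitzCor}, and your write-up is precisely that analogy with Theorem~\ref{carlitzTheo} replaced by Theorem~\ref{carlitzGenTheo}. Your exponent bookkeeping for the new feature $\sum_i\deg Q_i\leq q^{\delta}$ (choosing $\epsilon$ and $\delta'$ to fit inside the gap $\tfrac12-\delta$) is valid and correctly identifies why the hypothesis $\delta<\tfrac12$ is needed.
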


\section{Proof of Theorem \ref{mainTheo1}}\label{sec4}

In Section \ref{sec2}, we introduced the concept of a $\Sym(d)$-function as a means to enumerate cycle types of index $d$ first-order cyclotomic permutations of $\IF_q$. We will now see that using Corollary \ref{carlitzCor}, one can prove a partial strengthening of Lemma \ref{psiFunLem}(1): If $\psi\in\Sym(d)$ is of a particular form (see Definition \ref{specialPermDef}, of a \enquote{special permutation}), then for each $\psi$-function $h$, if $q$ is a large enough $h$-admissible prime power and $\omega$ is a primitive root of $\IF_q$, not only does there exist an index $d$ first-order cyclotomic permutation $f$ of $\IF_q$ with $h_{f,\omega}=h$, but $f$ can also be chosen such that a fixed number of additive translates $f+c_1\id,f+c_2\id,\ldots,f+c_n\id$ (where $c_1,c_2,\ldots,c_n\in\IF_q$) are also permutations of $\IF_q$.

\begin{definition}\label{specialPermDef}
Let $d$ be a positive integer, and let $\psi\in\Sym(d)$. Assume that the cycle lengths of $\psi$ are $\ell_1\geq\ell_2\geq\cdots\geq\ell_r$, listed with multiplicities (so that the sum of the $\ell_j$ is $d$). Then $\psi$ is called \emph{special} if it is of the form
\[
\psi=\prod_{i=1}^r{\left(\sum_{j=1}^{i-1}{\ell_j},\sum_{j=1}^{i-1}{\ell_j}+1,\ldots,\sum_{j=1}^{i-1}{\ell_j}+(\ell_i-1)\right)}.
\]
\end{definition}

We note that the special permutations in $\Sym(d)$ form a set of conjugacy class representatives.

\begin{lemma}\label{psiFunLem2}
Let $d$ and $n$ be positive integers, let $\psi\in\Sym(d)$ be special, and let $h$ be a $\psi$-function. There is a $q_2=q_2(h,n)>0$ such that for all $h$-admissible prime powers $q$ with $q\geq q_2$ and all $c_1,c_2,\ldots,c_n\in\IF_q^{\ast}$, there is a first-order cyclotomic permutation $f$ of $\IF_q$ of index $d$ such that the following hold:
\begin{enumerate}
\item For $j=1,2,\ldots,n$, the function $f+c_j\id$ is a (first-order and index $d$ cyclotomic) permutation of $\IF_q$ that stabilizes each coset of the index $d$ subgroup $C$ of $\IF_q^{\ast}$.
\item The cycle type of $f$ is $\gamma_h(q)$.
\end{enumerate}
\end{lemma}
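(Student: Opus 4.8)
The plan is to write down, for each primitive root $\omega$ of $\IF_q$, an explicit index $d$ first-order cyclotomic permutation $f_\omega$ that realizes the coset permutation $\psi$ and whose coset-wise constants are \emph{powers of $\omega$}, and then to choose $\omega$ via Carlitz's theorem so that the translates $f_\omega+c_j\id$ stabilize every coset. Write the cycles of the special permutation $\psi$ as $\zeta_t=(s_t,s_t+1,\dots,s_t+\ell_t-1)$ with $s_t=\sum_{u<t}\ell_u$, put $m_t:=s_t+\ell_t-1=\max(\supp(\zeta_t))$ and $h_t:=h(\zeta_t)$, and on the coset $C_i$ let $f_\omega$ multiply by
\[
a_i:=\begin{cases}\omega, & i\in\supp(\zeta_t),\ i\neq m_t,\\ \omega^{\,dh_t+1-\ell_t}, & i=m_t,\end{cases}
\]
with $f_\omega(0):=0$. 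Since $q$ is $h$-admissible we have $dh_t\mid q-1$, so every exponent is a positive integer bounded by $d\max(\ran h)+1$ independently of $q$.

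First I would check that $f_\omega$ already satisfies condition (2) for \emph{every} primitive root $\omega$. Because $\omega\in C_1$ and $dh_t+1-\ell_t\equiv 1-\ell_t\Mod{d}$, the constant $a_i$ lies in $C_{\psi(i)-i}$, so $f_\omega(C_i)=a_i\omega^iC=C_{\psi(i)}$ and the coset permutation of $f_\omega$ is exactly $\psi$; in particular $f_\omega$ is a permutation. Multiplying the constants along a cycle telescopes to $\pi(\zeta_t)=\omega^{(\ell_t-1)+(dh_t+1-\ell_t)}=\omega^{dh_t}$, which has order $(q-1)/dh_t$; hence $h_{f_\omega,\omega}(\zeta_t)=h_t=h(\zeta_t)$, i.e.\ $h_{f_\omega,\omega}=h$, and Lemma \ref{psiFunLem}(1) yields $\CT(f_\omega)=\gamma_h(q)$. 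It remains to arrange condition (1). As noted in the excerpt, $f_\omega+c_j\id$ is again an index $d$ first-order cyclotomic mapping, multiplying by $a_i+c_j$ on $C_i$, and it is a permutation stabilizing every coset precisely when $a_i+c_j\in C$ for all $i$ and $j$. Thus everything reduces to choosing $\omega$ so that $\omega+c_j\in C$ and $\omega^{\,dh_t+1-\ell_t}+c_j\in C$ for all $t$ and all $j=1,\dots,n$.

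This is exactly the control that Carlitz's theorem supplies. I would take the polynomial $T$ (to be forced to evaluate to a primitive root) together with, for each exponent $E$ occurring above and each $j$, the polynomial $T^{E}+c_j$, and apply Corollary \ref{carlitzCor} with target orders $q-1$ for $T$ and $(q-1)/d$ for every $T^{E}+c_j$. The occurring denominators lie in $\{1,d\}$, whose lcm is $d$, and $q\equiv1\Mod{d}$ holds by $h$-admissibility; the number of polynomials and their total degree are bounded in terms of $d$, $h$ and $n$ only. Hence for $q\ge q_2(h,n)$ the corollary produces an element $\omega$ with $\ord(\omega)=q-1$ (so $\omega$ is a primitive root) and $\ord(\omega^{E}+c_j)=(q-1)/d$, i.e.\ $\omega^{E}+c_j\in C$, for every $E$ and $j$; this $\omega$ yields the desired $f_\omega$.

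The one genuine obstacle is verifying the hypotheses of Corollary \ref{carlitzCor}, namely that $T$ and the $T^{E}+c_j$ are squarefree and pairwise coprime. Squarefreeness fails exactly when $p:=\charac(\IF_q)\mid E$; here I would pass to the separable part: writing $E=p^aE'$ with $p\nmid E'$ and using that Frobenius is bijective on $\IF_q$, one has $\omega^{E}+c_j=(\omega^{E'}+c_j^{1/p^a})^{p^a}$, and since $x\mapsto x^{p^a}$ is an order-preserving bijection of $\IF_q^{\ast}$ (as $\gcd(p,q-1)=1$), the requirement $\omega^{E}+c_j\in C$ is equivalent to $\omega^{E'}+c_j^{1/p^a}\in C$, with $T^{E'}+c_j^{1/p^a}$ now squarefree. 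For pairwise coprimality, two polynomials with a common (reduced) exponent differ by a nonzero constant, since the $c_j$ are distinct and nonzero, and are therefore coprime; polynomials with distinct exponents can share a root only through finitely many multiplicative coincidences among the $c_j$, which can be dodged by using the freedom to replace each distinguished exponent $dh_t+1-\ell_t$ by $dh_tm_t+1-\ell_t$ for a bounded multiplier $m_t$ coprime to $(q-1)/dh_t$ (this leaves $\pi(\zeta_t)$ of order $(q-1)/dh_t$ and keeps all degrees bounded). Making this avoidance uniform in $q$ is the delicate point and is where I expect the bulk of the technical work to lie.
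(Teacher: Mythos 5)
Your construction of $f_\omega$ is exactly the one the paper uses (same coset-wise constants $\omega$ and $\omega^{dh_t+1-\ell_t}$, same telescoping computation of $\pi(\zeta_t)=\omega^{dh_t}$), and your reduction of condition (1) to forcing $\omega+c_j\in C$ and $\omega^{E_t}+c_j\in C$ is also the paper's. The gap is in the last step, and it is precisely the one you flag: your plan to restore pairwise coprimality of the polynomials $T+c_j$, $T^{E_t}+c_j$ by replacing the exponents $E_t=dh_t+1-\ell_t$ with $dh_tm_t+1-\ell_t$ for bounded multipliers $m_t$ cannot work in general. If some $c_j=-1$ (or, more generally, $-c_j$ is a root of unity of small order), then $T+c_j=T-1$ and $T^{E}+c_j=T^{E}-1$ share the root $1$ for \emph{every} exponent $E$, so no modification of the exponents, bounded or not, makes these polynomials coprime. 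The obstruction is not a ``finite set of multiplicative coincidences'' that can be dodged; it is unavoidable within your framework of applying Corollary \ref{carlitzCor} directly to the polynomials $T^{E}+c_j$ themselves.

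The paper resolves this with a different, and simpler, final move: it applies Corollary \ref{carlitzCor} not to the $P_{i,j}:=T^{E}+c_j$ but to the list $Q_1,\ldots,Q_r$ of \emph{distinct irreducible polynomials dividing at least one $P_{i,j}$}, together with $Q_0:=T$ (which is not among them because the $c_j$ are nonzero). Irreducibles are automatically squarefree, distinct irreducibles are automatically pairwise coprime, and their number and degree sum are still bounded in terms of $d$, $n$ and $h$, so the corollary applies verbatim and yields $\omega$ with $\ord(\omega)=q-1$ and $\ord(Q_k(\omega))=\frac{q-1}{d}$ for all $k\geq1$. Since each $P_{i,j}$ is a product of the $Q_k$ (with multiplicities), each evaluation $P_{i,j}(\omega)$ is a product of elements of the group $C$, hence lies in $C$. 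This single observation disposes of both the squarefreeness issue (your Frobenius argument, while correct, becomes unnecessary) and the coprimality issue that your proposal leaves open.
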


\begin{proof}
If $\zeta$ is a cycle of $\sigma$ of length $\ell$, we let $i_{\init}(\zeta)$ denote the smallest element of $\{0,1,\ldots,d-1\}$ that lies on $\zeta$, so that $\zeta=(i_{\init}(\zeta),i_{\init}(\zeta)+1,\ldots,i_{\init}(\zeta)+\ell-1)$, and we set $i_{\term}(\zeta):=i_{\init}(\zeta)+\ell-1$. We call $i_{\init}(\zeta)$ and $i_{\term}(\zeta)$ the \emph{initial} and \emph{terminal index of $\zeta$}, respectively. Moreover, for $i=0,1,\ldots,d-1$, let $\zeta(i)$ be the cycle of $i$ under $\sigma$. Let $q$ be an $h$-admissible prime power. Until further notice, let $\omega$ be an arbitrary but fixed primitive root of $\IF_q$.

Denoting by $\ell(\zeta)$ the length of a cycle $\zeta$ of $\psi$, and for $i=0,1,\ldots,d-1$ by $\zeta(i)$ the unique cycle of $\psi$ such that $i\in\supp(\zeta(i))$, we define a function $f_{\omega}:\IF_q\rightarrow\IF_q$ via
\[
f_{\omega}(x):=\begin{cases}0, & \text{if }x=0, \\ \omega x, & \text{if }x\in\omega^iC,i\not=i_{\term}(\zeta(i)), \\ \omega^{-\ell(\zeta(i))+1+dh(\zeta(i))}x, & \text{if }x\in\omega^iC,i=i_{\term}(\zeta(i)).\end{cases}
\]
Recall our definition of the field element $\pi_{f,\omega}(\zeta)$, where $\zeta$ is a cycle of $\psi$, from Notation \ref{piNot}. By definition of $f_{\omega}$, we find that for each cycle $\zeta$ of $\psi$, one has
\[
\pi_{f,\omega}(\zeta)=\omega^{\ell(\zeta)-1}\cdot\omega^{-\ell(\zeta)+1+dh(\zeta)}=\omega^{dh(\zeta)},
\]
whence
\[
\ord(\pi_{f,\omega})(\zeta)=\frac{q-1}{dh(\zeta)},
\]
or equivalently
\[
h(\zeta)=\frac{q-1}{d\ord(\pi_{f,\omega})}=h_{f,\omega}(\zeta).
\]
This proves that $h_{f,\omega}=h$. It remains to show that if $q$ is large enough (relative to $h$ and $n$), then for arbitrarily chosen elements $c_1,c_2,\ldots,c_n\in\IF_q^{\ast}$, the primitive root $\omega$ can be suitably chosen such that for $j=1,2,\ldots,n$, the function $f_{\omega}+c_j\id$ is a permutation of $\IF_q$ that stabilizes each coset of $C$ in $\IF_q^{\ast}$.

Since we certainly have $(f_{\omega}+c_j\id)(0)=0$ for $j=1,2,\ldots,n$, we will fix $i\in\{0,1,\ldots,d-1\}$ and focus on the mapping behavior of the translates $f_{\omega}+c_j\id$ on the coset $C_i:=\omega^iC$ of $C$ in $\IF_q^{\ast}$. By definition of $f_{\omega}$, for all $x\in C_i$ and $j=1,2,\ldots,n$, we have
\[
(f_{\omega}+c_j\id)(x)=f_{\omega}(x)+c_jx=\begin{cases}(\omega+c_j)x, & \text{if }i\not=i_{\term}(\zeta(i)), \\ (\omega^{-\ell(\zeta(i))+1+dh(\zeta(i))}+c_j)x, & \text{if }i=i_{\term}(\zeta(i)).\end{cases}
\]
It follows that as long as the constants
\begin{itemize}
\item $\omega+c_j$ for $i$ not terminal in $\zeta(i)$ and
\item $\omega^{-\ell(\zeta(i))+1+dh(\zeta(i))}+c_j$ for $i$ terminal in $\zeta(i)$
\end{itemize}
are always in $C$, the function $f_{\omega}+c_j\id$ will be a permutation of $\IF_q$ that stabilizes every coset of $C$ (more precisely, its restriction to a coset of $C$ is always a multiplication by a constant element of $C$). Proving that we can indeed choose $\omega$ such that those constants are in $C$ involves an application of Corollary \ref{carlitzCor}, as we will now explain.

For $i=0,1,\ldots,d-1$ and $j=1,2,\ldots,n$, we define the following univariate polynomial $P_{i,j}\in\IF_q[T]$:
\[
P_{i,j}:=\begin{cases}T+c_j, & \text{if }i\text{ is not the terminal index of }\zeta(i), \\ T^{-\ell(\zeta(i))+1+dh(\zeta(i))}+c_j, & \text{if }i\text{ is the terminal index of }\zeta(i).\end{cases}
\]
Observe that each of the constants which we want to force to be in $C$ (as explained in the previous paragraph) is of the form $P_{i,j}(\omega)$ for suitable $i$ and $j$, and so we want to force these polynomial evaluations to lie in $C$, for a suitable primitive root $\omega$ of $\IF_q$.

Now, let $Q_1,\ldots,Q_r\in\IF_q[T]$ be those irreducible polynomials that divide at least one of the $P_{i,j}$, listed without repetitions. Moreover, set $Q_0:=T$, and observe that $Q_0$ is distinct from any $Q_k$ with $k>0$ (this uses that the $c_j$ are all nonzero). Note that for all $i$ and $j$,
\[
\deg{P_{i,j}}\leq d\max{\ran(h)},
\]
and thus
\[
r\leq\sum_{i,j}{\deg{P_{i,j}}}\leq d^2n\max{\ran(h)}.
\]
Hence
\[
\sum_{k=0}^r{\deg{Q_k}}\leq1+r\cdot d\max{\ran(h)}\leq 1+d^3n(\max{\ran(h)})^2.
\]
This shows that both the number $r+1$ of the pairwise coprime irreducible polynomials $Q_k$ and their degree sum are bounded in terms of $d$, $n$ and $h$. But $d$ is bounded in terms of $h$ (it is the sum of the lengths of the cycles from $\codom(h)$), so the polynomial number and degree sum are actually bounded in terms of $n$ and $h$ alone. Therefore, Corollary \ref{carlitzCor} implies that if $q$ is large enough relative to $d$, there is an $\omega\in\IF_q$ such that $Q_k(\omega)\not=0$ for $k=0,1,\ldots,r$ and
\[
\ord(Q_0(\omega))=\ord(\omega)=q-1\text{ and }\ord(Q_k(\omega))=\frac{q-1}{d}\text{ for }k=1,2,\ldots,r.
\]
In particular, $\omega$ is a primitive root of $\IF_q$, and since $Q_k(\omega)\in C$ for $k=1,2,\ldots,r$ and each evaluation $P_{i,j}(\omega)$ is a product of such evaluations $Q_k(\omega)$, it follows that $P_{i,j}(\omega)\in C$ for all $i$ and $j$, as required.
\end{proof}

\begin{example}\label{psiFunEx2}
Set $d:=3$, $n:=1$, $\psi:=(0,1)(2)\in\Sym(3)$, and let $h$ be the $\psi$-function with $h((0,1))=3$ and $h((2))=4$. Recall from Example \ref{psiFunEx} that a prime power $q$ is $h$-admissible if and only if $q\equiv1\Mod{36}$, in which case
\[
\gamma_h(q)=x_1x_{\frac{2}{9}(q-1)}^3x_{\frac{1}{12}(q-1)}^4.
\]
Hence Lemma \ref{psiFunLem2} guarantees that as long as $q$ is a large enough prime power with $q\equiv1\Mod{36}$, there will always be an index $3$ first-order cyclotomic permutation $f$ of $\IF_q$ of cycle type
\[
x_1x_{\frac{2}{9}(q-1)}^3x_{\frac{1}{12}(q-1)}^4
\]
such that $f+1_{\IF_q}\id=f+\id$ is a permutation of $\IF_q$ (i.e., such that $f$ is a complete mapping of $\IF_q$).
\end{example}

Now that we have Proposition \ref{psiFunProp} and Lemma \ref{psiFunLem2}, the actual proof of Theorem \ref{mainTheo1} is relatively easy:

\begin{proof}[Proof of Theorem \ref{mainTheo1}]
Let $\Mcal$ be the set of all $\Sym(d)$-functions $h$ such that
\[
\max(\ran(h))\leq\epsilon^{-1}.
\]
Then $\Mcal$ is a finite set of $\Sym(d)$-functions, depending on $d$ and $\epsilon$. Set
\[
q_0=q_0(d,n,\epsilon):=\max_{h\in\Mcal,m\leq n}{q_2(h,m)}>0
\]
where $q_2$ is as in Lemma \ref{psiFunLem2}. We claim that this definition of $q_0$ does the job. Indeed, let $q\geq q_0$ be a prime power. We may assume that $q\equiv1\Mod{d}$, since otherwise, the statement of the theorem is vacuously true for $q$ (because there are no index $d$ first-order cyclotomic permutations on $\IF_q$). Let $\omega$ be a primitive root of $\IF_q$, let $c_1,c_2,\ldots,c_n\in\IF_q$ be arbitrary, and let $\gamma$ be the cycle type of some index $d$ first-order cyclotomic permutation on $\IF_q$ all of whose cycles on $\IF_q^{\ast}$ are of length at least $\epsilon q$. Let $c'_1,\ldots,c'_m$ be the distinct nonzero field elements among $c_1,\ldots,c_n$. By the definition of $q_0$, there exists an index $d$ first-order cyclotomic permutation $f$ on $\IF_q$ such that
\begin{itemize}
\item $\CT(f)=\gamma_h(q)=\gamma$ and
\item $f+c'_j\id$ is a permutation of $\IF_q$ for $j=1,2,\ldots,m$.
\end{itemize}
This $f$ has the properties we desire, which are
\begin{itemize}
\item $\CT(f)=\gamma_h(q)=\gamma$ and
\item $f+c_j\id$ is a permutation of $\IF_q$ for $j=1,2,\ldots,n$.
\end{itemize}
\end{proof}

\section{Proof of Theorem \ref{mainTheo2}}\label{sec5}

We may assume without loss of generality that $c_1=0_{\IF_q}$ (and $c_2,\ldots,c_n$ are pairwise distinct nonzero elements of $\IF_q$). Fix a primitive root $\omega$ of $\IF_q$, and denote by $C_i:=\omega^iC$ for $i=0,1,\ldots,d-1$ the distinct cosets of $C$ in $\IF_q^{\ast}$. For $j=1,2,\ldots,n$, denote by $\sigma_j$ the unique function $\{0,1,\ldots,d-1\}\rightarrow\{0,1,\ldots,d-1\}$ such that $s_j(C_i)=C_{\sigma_j(i)}$ for all $i=0,1,\ldots,d-1$.

For $i=0,1,\ldots,d-1$, set $\delta_i:=\sigma_1(i)-i$ and $b_i:=\omega^{\delta_i}$. For $\vec{\gamma}=(\gamma_0,\ldots,\gamma_{d-1})\in C^{d-1}$, set
\[
f_{\vec{\gamma}}(x):=\begin{cases}0, & \text{if }x=0, \\ b_i\gamma_i x, & \text{if }x\in C_i\text{ for some }i=0,1,\ldots,d-1.\end{cases}
\]
Then for each $i=0,1,\ldots,d-1$, we have
\[
f_{\vec{\gamma}}(C_i)=b_i\gamma_iC_i=\omega^{\sigma_1(i)-i}\gamma_i\omega^iC=\omega^{\sigma_1(i)}C=C_{\sigma_1(i)}=s_1(C_i).
\]
It remains to show that if $q$ is large enough (relative to $d$ and $n$), then the vector $\vec{\gamma}\in C^d$ can be adjusted such that we also have
\[
(f_{\vec{\gamma}}+c_j\id)(C_i)=s_j(C_i)
\]
for $i=0,1,\ldots,d-1$ and $j=2,3,\ldots,n$. Now, for a fixed $i\in\{0,1,\ldots,d-1\}$, we have that the (linear) polynomials
\[
T,b_iT+c_2,b_iT+c_3,\ldots,b_iT+c_n\in\IF_q[T]
\]
are non-constant, squarefree and pairwise coprime. Applying Corollary \ref{carlitzGenCor} with $r:=n$, with
\[
d_k:=\begin{cases}d, & \text{if }k=1, \\ \gcd(\sigma_k(i)-i,d), & \text{if }k=2,3,\ldots,n,\end{cases}
\]
and with
\[
j_k:=\begin{cases}0, & \text{if }k=1, \\ \sigma_k(i)-i, & \text{if }k=2,3,\ldots,n,\end{cases}
\]
we conclude that if $q$ is large enough (relative to $d$ and $n$), then there is a $\gamma_i\in\IF_q$ such that
\begin{enumerate}
\item $\gamma_i\not=0$,
\item the multiplicative order of $\gamma_i$ is $\frac{q-1}{d}$, and
\item for $j=2,3,\ldots,n$, one has $b_i\gamma_i+c_j\not=0$ and $\log_{\omega}(b_i\gamma_i+c_j)\equiv\sigma_j(i)-i\Mod{d}$.
\end{enumerate}
In particular, $\gamma_i$ is a generator of $C$, whence $\vec{\gamma}:=(\gamma_0,\ldots,\gamma_{d-1})\in C^d$. By construction, for $i=0,1,\ldots,d-1$ and $j=2,3,\ldots,n$, we have
\[
(f_{\vec{\gamma}}+c_j\id)(x)=b_i\gamma_ix+c_jx=(b_i\gamma_i+c_j)x,
\]
whence
\[
(f_{\vec{\gamma}}+c_j\id)(C_i)=(b_i\gamma_i+c_j)C_i=C_{\sigma_j(i)-i+i}=C_{\sigma_j(i)}=s_j(C_i),
\]
as required.

\section{Proof of Theorem \ref{mainTheo3}}\label{sec6}

In addition to the results and ideas discussed in Section \ref{sec3}, we will need the following number-theoretic lemma:

\begin{lemma}\label{uChoiceLem}
Let $d$ be a positive integer, and let $\epsilon>0$. There is a constant $C_1=C_1(d,\epsilon)$ such that if $q=p^f$ is a prime power with $q\equiv1\Mod{d}$ and $q\geq C_1$, then there is an odd unit $u\in(\IZ/\frac{q-1}{d}\IZ)^{\ast}$ such that $1<u<\frac{q^{\epsilon}}{d}$ and $u\not\equiv e(d-1)\Mod{p}$ where $e$ denotes the multiplicative inverse of $d$ modulo $p$.
\end{lemma}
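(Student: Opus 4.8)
The plan is to produce $u$ by an elementary sieve: I count the integers in the short interval $(1,q^\epsilon/d)$ that meet all four requirements and show this count is positive once $q$ is large. Throughout write $M:=\frac{q-1}{d}$ and $y:=q^\epsilon/d$, and let $s$ be the number of distinct prime divisors of $2M$. Since increasing $\epsilon$ only enlarges the admissible range for $u$ while leaving the other three conditions untouched, I may assume $\epsilon\leq\frac12$; then $y<M$ for large $q$, so an integer $u\in(1,y)$ represents a genuine residue modulo $M$ and ``$u$ is a unit'' just means $\gcd(u,M)=1$. Note also that $q\equiv0\Mod{p}$ forces $p\nmid q-1$, hence $p\nmid M$ (so that $e=d^{-1}\bmod p$ is well defined) and, when $p$ is odd, $\gcd(p,2M)=1$.

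The first observation is that for $u$ in the interval $(1,y)$ only the small prime divisors of $M$ are relevant: if $\ell\mid M$ is prime with $\ell\geq y$, then $\ell$ cannot divide any $u$ with $2\leq u<y\leq\ell$, so for such $u$ one has $\gcd(u,M)=1$ if and only if $\gcd(u,R)=1$, where $R:=\prod_{\ell\mid 2M,\,\ell<y}\ell$. Moreover ``$u$ odd'' is exactly $\gcd(u,2)=1$, and since $2\mid R$ (for large $q$) the two conditions ``$u$ odd and $\gcd(u,M)=1$'' combine into the single condition $\gcd(u,R)=1$. Assuming $p$ odd, I would then estimate
\[
N:=\#\{\,u\in(1,y]:\gcd(u,R)=1,\ u\not\equiv e(d-1)\Mod{p}\,\}
\]
by inclusion--exclusion over the squarefree divisors $m\mid R$. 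Because $\gcd(p,R)=1$, the Chinese Remainder Theorem gives $\#\{u\leq y:m\mid u,\ u\not\equiv e(d-1)\ (\mathrm{mod}\ p)\}=\frac{y}{m}\bigl(1-\frac1p\bigr)+O(1)$ for each $m$, and summing against $\mu(m)$ yields
\[
N=y\Bigl(1-\tfrac1p\Bigr)\prod_{\ell\mid R}\Bigl(1-\tfrac1\ell\Bigr)+O(2^{s}).
\]
(When $p=2$ one has $e=1$ and $e(d-1)\equiv d-1\equiv0\Mod{2}$ since $d$ is odd, so the congruence condition is just ``$u$ odd'' and is already subsumed by $\gcd(u,R)=1$; here one estimates the analogous count without the factor $1-\frac1p$, and the argument below goes through verbatim.)

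It then remains to show that the main term dominates the error. For the main term I would use $1-\frac1p\geq\frac23$ (valid as $p\geq3$) together with $\prod_{\ell\mid R}(1-\frac1\ell)\geq\prod_{\ell\mid 2M}(1-\frac1\ell)=\frac{\phi(2M)}{2M}\geq\frac{c_0}{\log\log(2M)}$, the last step being the classical lower bound for Euler's totient; this produces a main term that is $\gg\frac{q^\epsilon}{d\log\log q}$. For the error I would invoke the standard bound $s\leq(1+o(1))\frac{\log(2M)}{\log\log(2M)}$ on the number of distinct prime factors, which gives $2^{s}=(2M)^{o(1)}=q^{o(1)}$, hence $2^{s}\leq q^{\epsilon/2}$ once $q$ is large. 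Since $\frac{q^\epsilon}{d\log\log q}$ grows faster than $q^{\epsilon/2}$, there is a threshold $C_1=C_1(d,\epsilon)$ beyond which $N>0$, and any integer counted by $N$ satisfies all requirements of the lemma.

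The genuinely delicate point is this final balance between the main term and the sieve error: the interval has length only about $q^\epsilon$, so the error $2^{s}$ arising from the potentially large number of prime divisors of $M$ must be controlled, and it is precisely the fact that $2^{s}=q^{o(1)}$ (rather than a fixed positive power of $q$) that lets the main term $q^{\epsilon+o(1)}/\log\log q$ win. Everything else---the reduction to $\epsilon\leq\frac12$, the passage from $M$ to its small part $R$, the separation of the modulus $p$ via the Chinese Remainder Theorem, and the evaluation of $e(d-1)\bmod 2$---is routine.
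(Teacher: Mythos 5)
Your proof is correct, but it follows a genuinely different route from the paper. You run a direct Legendre-style sieve: count all odd units $u$ in $(1,q^{\epsilon}/d)$ avoiding the residue class $e(d-1)\bmod p$, reduce to the primes below $q^{\epsilon}/d$ dividing $2(q-1)/d$, split the congruence condition off via CRT, and balance the main term $\gg q^{\epsilon}/(d\log\log q)$ (via the classical bound $\phi(n)/n\gg1/\log\log n$) against the sieve error $2^{\omega(2(q-1)/d)}=q^{o(1)}$. The paper instead avoids any inclusion--exclusion by restricting attention to \emph{prime} candidates, for which being a unit modulo $\frac{q-1}{d}$ is automatic: it picks an odd prime $v<q^{\epsilon/2}/\sqrt{d}$ not dividing $\frac{q-1}{d}$ (existence by the Prime Number Theorem versus the at most $\log_2\frac{q-1}{d}$ forbidden primes), observes that $v$ and $v^2$ are both odd units in the admissible range, and argues by contradiction: if every admissible odd unit were $\equiv e(d-1)\Mod{p}$, then $e(d-1)$ would be idempotent modulo $p$, forcing $e(d-1)\equiv0$ or $1\Mod{p}$; a short unit computation pins down $e(d-1)\equiv0\Mod{p}$, and then any odd prime unit in the range distinct from $p$ yields the contradiction. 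What each approach buys: yours is quantitative (it shows there are $\gg q^{\epsilon}/(d\log\log q)$ valid choices of $u$, not just one) and requires no case trickery, at the cost of controlling the $2^{\omega}$ sieve error; the paper's is shorter and needs only prime counting, at the cost of the somewhat ad hoc idempotent argument. Two cosmetic points in your write-up: you count $u\in(1,y]$ while the lemma wants $u<y$, and a prime $\ell\geq y$ dividing $(q-1)/d$ could divide $u$ only in the boundary case $u=\ell=y$; both discrepancies change your count by $O(1)$ and are absorbed by the error term, but they should be stated.
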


\begin{proof}
We may assume that $\epsilon<1$. If $q$ is large enough (relative to $d$), then there is an odd unit $v\in(\IZ/\frac{q-1}{d}\IZ)^{\ast}$ such that $1<v<\frac{q^{\epsilon/2}}{\sqrt{d}}$ -- just let $v$ be any of the primes less than $\frac{q^{\epsilon/2}}{\sqrt{d}}$ that are odd and do not divide $\frac{q-1}{d}$. Such a $v$ exists because by the Prime Number Theorem, if $q$ is large enough, then the total number of primes less than $\frac{q^{\epsilon/2}}{\sqrt{d}}$ is at least
\[
\frac{q^{\epsilon/2}/\sqrt{d}}{2\log(q^{\epsilon/2}/\sqrt{d})}=\frac{q^{\epsilon/2}}{2\sqrt{d}(\frac{\epsilon}{2}\log{q}-\frac{1}{2}\log{d})},
\]
whereas the number of \enquote{forbidden} primes, those that are equal to $2$ or divide $\frac{q-1}{d}$, is at most
\[
1+\log_2\left(\frac{q-1}{d}\right)=1+\frac{\log(q-1)-\log{d}}{\log{2}}.
\]
Note that each such $v$ is also a non-involution in $(\IZ/\frac{q-1}{d}\IZ)^{\ast}$, because its square satisfies the inequality chain
\[
1<v^2<\frac{q^{\epsilon}}{d}<\frac{q-1}{d}.
\]
Hence, if the statement of the lemma was false, we would have
\[
v\equiv e(d-1)\equiv v^2\equiv e^2(d-1)^2\Mod{p}.
\]
Because $0$ and $1$ are the only solutions modulo $p$ of the congruence $x^2\equiv x\Mod{p}$, this allows us to conclude that $e(d-1)\equiv0,1\Mod{p}$. Using that $e$ is the multiplicative inverse of $d$ modulo $p$, we see that this is equivalent to $e\equiv 0,1\Mod{p}$. But $e$ is a unit modulo $p$, so $e\equiv0\Mod{p}$ is impossible, and we conclude that $d\equiv e\equiv1\Mod{p}$ and $e(d-1)\equiv0\Mod{p}$. Thus, all we need to do in order to obtain a contradiction is to find an odd unit $u$ with $1<u<\frac{q^{\epsilon}}{d}$ such that $u\not\equiv0\Mod{p}$. For this, we can choose $u$ to be any of the roughly
\[
\frac{q^{\epsilon}}{d(\epsilon\log{q}-\log{d})}-(\epsilon\log{q}-\log{d})-2
\]
odd primes less than $\frac{q^{\epsilon}}{d}$ that are distinct from $p$ and do not divide $\frac{q-1}{d}$.
\end{proof}

Let us now turn to the actual proof of Theorem \ref{mainTheo3}. We may assume without loss of generality that $d$ is a prime. Indeed, assume that the statement holds for primes. Then if $d$ is an arbitrary positive integer with $d>1$, and if $\ell$ is the smallest prime divisor of $d$, we have that every prime power $q$ with $q\equiv1\Mod{d}$ is also congruent to $1$ modulo $\ell$. Therefore, if $q\equiv1\Mod{d}$ and $q\geq C(\ell)$, the field $\IF_q$ admits a complete mapping with the desired properties, which shows that the statement of the theorem holds with $C(d):=C(\ell)$.

Assuming henceforth that $d$ is a prime, we choose (by Lemma \ref{uChoiceLem}) an odd unit $u\in(\IZ/\frac{q-1}{d}\IZ)^{\ast}$ with $1<u<\frac{q^{\epsilon}}{d}$ (where $\epsilon$ is number in $\left(0,1\right)$ whose precise value will be specified later) such that $u\not\equiv e(d-1)\Mod{p}$, where $e$ is the multiplicative inverse of $d$ modulo $p$. Set $o:=u\cdot d-(d-1)$. For a fixed primitive root $\omega$ of $\IF_q$ and associated indexing $C_i:=\omega^iC$ (for $i=0,1,\ldots,d-1$) of the cosets in $\IF_q^{\ast}$ of the index $d$ subgroup $C$ of $\IF_q^{\ast}$, we consider the first-order cyclotomic mapping $f:\IF_q\rightarrow\IF_q$ of the following form:
\[
f(x)=\begin{cases}0, & \text{if }x=0, \\ \omega x, & \text{if }x\not=0\text{ and }x\notin C_{d-1}, \\ \omega^ox, & \text{if }x\in C_{d-1}.\end{cases}
\]
Observe that $f$ is not additive (i.e., $f$ is not an endomorphism of the additive group $\IF_q$). Otherwise, since the function $g:\IF_q\rightarrow\IF_q$ with $g(x)=\omega x$ for all $x\in\IF_q$ is additive, we would have that $f-g$ is additive as well. But
\begin{align*}
|\ker(f-g)| &=|\{x\in\IF_q: (f-g)(x)=0\}|=|\{0\}\cup\bigcup_{i=0}^{d-2}{C_i}|=1+(d-1)\frac{q-1}{d} \\
&=1+d\cdot\frac{q-1}{d}-\frac{q-1}{d}=q-\frac{q-1}{d}\geq q-\frac{q-1}{2}>\frac{1}{2}q.
\end{align*}
Since $\ker(f-g)$ is a subgroup of the additive group $\IF_q$ and the order of a subgroup of a finite group $G$ divides $|G|$, we conclude that $\ker(f-g)=\IF_q$, i.e., that $f=g$, which is not the case, and we obtain the desired contradiction.

We claim that $f$ is a permutation of $\IF_q$. Since $f$ is a first-order cyclotomic mapping, this is equivalent to stating that $f$ permutes the cosets of $C$. Now, if $i=0,1,\ldots,d-2$, then $f(C_i)=\omega C_i=\omega(\omega^iC)=\omega^{i+1}C=C_{i+1}$, so it boils down to showing that $f(C_{d-1})=C_0$. But $f(C_{d-1})=\omega^oC_{d-1}$, so we need to show that $o\equiv1\Mod{d}$. By definition of $o$, we have
\[
\gcd(q-1,o+d-1)=\gcd(q-1,u\cdot d)=d.
\]
In particular, $d\mid o+d-1$, whence $d\mid o-1$ or, equivalently, $o\equiv1\Mod{d}$, as required. This concludes our argument that $f$ is a permutation of $\IF_q$.

Next, we claim that $f$ permutes $\IF_q^{\ast}$ cyclically. Since $f$ permutes the $d$ cosets of $C$ in $\IF_q^{\ast}$ cyclically, this is equivalent to claiming that the iterate $f^d$ permutes each coset of $C$ cyclically. But
\[
f^d(x)=\omega^{d-1}\omega^ox=\omega^{o+d-1}x
\]
for all $x\in\IF_q$, so this is equivalent to claiming that $\ord(\omega^{o+d-1})=\frac{q-1}{d}$ which, in turn, is equivalent to $\gcd(o+d-1,q-1)=d$, which we showed above.

It remains to show that the primitive root $\omega$ can be chosen such that $g:=f+\id$ is a permutation of $\IF_q$ that permutes $\IF_q^{\ast}$ cyclically. Observe that $g$ is also a first-order cyclotomic mapping of $\IF_q$, of the following form:
\[
g(x)=\begin{cases}0, & \text{if }x=0, \\ (\omega+1)x, & \text{if }x\not=0\text{ and }x\notin C_{d-1}, \\ (\omega^o+1)x, & \text{if }x\in C_{d-1}.\end{cases}
\]
From this, we see that $g$ will show the same \enquote{rough} mapping behavior (on the cosets of $C$) as $f$ as long as
\begin{itemize}
\item $\log_{\omega}(\omega+1)\equiv1\Mod{d}$ and
\item $\log_{\omega}(\omega^o+1)\equiv1\Mod{d}$.
\end{itemize}
Assuming these two conditions hold, $g$ will permute the elements of $\IF_q^{\ast}$ cyclically as long as
\[
\ord((\omega+1)^{d-1}(\omega^o+1))=\frac{q-1}{d}.
\]
Recall that we originally chose $\omega$ as a primitive root of $\IF_q$, and this is crucial for the argument that $f$ permutes $\IF_q^{\ast}$ cyclically. Therefore, what we need to show is that $\IF_q$ admits an element $\xi$ with the following properties:
\begin{enumerate}
\item $\ord(\xi)=q-1$.
\item $\ord((\xi+1)^{d-1}(\xi^o+1))=\frac{q-1}{d}$.
\item $\log_{\omega}(\xi+1)\equiv1\Mod{d}$.
\item $\log_{\omega}(\xi^o+1)\equiv1\Mod{d}$.
\end{enumerate}
Assuming that $q$ is large enough, we will show that such an element $\xi$ exists by an adaptation of the character sum method discussed in Section \ref{sec3} (which, in turn, is a generalization of Carlitz's character sum method from \cite{Car56a}). First, we need a few more technical observations.

Note that $o$ is not divisible by $p$. Indeed, if $o=ud-(d-1)\equiv0\Mod{p}$, it follows that $u\equiv e(d-1)\Mod{p}$, and $u$ was chosen specifically such that this congruence does \emph{not} hold. In particular, the polynomial $P:=T^o+1\in\IF_q[T]$ is squarefree, because
\[
\gcd(P,P')=\gcd(T^o+1,oT^{o-1})=1.
\]
Moreover, since $o$ is odd, we can write $T^o+1=(T+1)\cdot Q$ where
\[
Q:=T^{o-1}-T^{o-2}+T^{o-3}\mp\cdots-T+1.
\]
Since
\[
Q(-1)=o\not=0,
\]
we have $\gcd(Q,T+1)=1$. Furthermore, $Q(0)=1\not=0$, so $\gcd(Q,T)=1$ as well. In summary, we note that the three polynomials $T,T+1,Q\in\IF_q[T]$ are non-constant, squarefree and pairwise coprime (as in the assumptions of the Davenport-Weil Lemma \ref{davenportLem}).

Using Lemmas \ref{carlitzLem}(1) and \ref{discLogLem}, we see that the number of $\xi\in\IF_q$ that satisfy conditions (1)--(4) from above is
\[
N:=\sum_{\xi\in\IF_q}{f_{q-1}(\xi)f_{\frac{q-1}{d}}((\xi+1)^{d-1}(\xi^o+1))g_{d,1}(\xi+1)g_{d,1}(\xi^o+1)}.
\]
Using Lemma \ref{carlitzLem}(2) and the definition of $g_{d,i}$ from Lemma \ref{discLogLem}, we can rewrite this into
\begin{equation}\label{nRewrittenEq}
N=\frac{\phi(q-1)\phi(\frac{q-1}{d})}{(q-1)^2d^2}\sum_{z_1,z_2\mid q-1}{\frac{\mu(z_1)\mu(\frac{z_2}{\gcd(z_2,d)})}{\phi(z_1)\phi(\frac{z_2}{\gcd(z_2,d)})}\sum_{\psi_1,\psi_2: \atop \ord(\psi_i)\mid d}{\frac{1}{(\psi_1\psi_2)(\omega)}\sum_{\chi_1,\chi_2: \atop \ord(\chi_i)=z_i}{S(\vec{P},\vec{\nu})}}}
\end{equation}
where $\vec{P}=(P_1,P_2,P_3):=(T,T+1,Q)$, $\vec{\nu}=(\nu_1,\nu_2,\nu_3):=(\chi_1,\chi_2^d\psi_1\psi_2,\chi_2\psi_2)$ and, as by the definition in Lemma \ref{davenportLem},
\[
S(\vec{P},\vec{\nu})=\sum_{\xi\in\IF_q}{(\chi_1(\xi)\cdot(\chi_2^d\psi_1\psi_2)(\xi+1)\cdot(\chi_2\psi_2)(Q(\xi)))}.
\]
We now analyze this character sum using Lemma \ref{davenportPlusLem}. We write
\[
N=N_{\mathrm{large}}+N_{\mathrm{small}}
\]
where $N_{\mathrm{large}}$ is the sum of all the summands in formula (\ref{nRewrittenEq}) for which $S(\vec{P},\vec{\nu})$ is large, i.e., where $S(\vec{P},\vec{\nu})$ is of the form
\[
q-|\Null(P_1P_2P_3)|=q-|\Null(T(T^o+1))|>q-q^{\epsilon},
\]
where the last inequality uses that
\[
|\Null(T(T^o+1))|\leq\deg(T(T^o+1))=o+1=ud-(d-1)+1\leq ud<q^{\epsilon}.
\]
By Lemma \ref{davenportPlusLem}, $S(\vec{P},\vec{\nu})$ is of this form if and only if each of the three characters $\nu_i$ is the principal character $\chi_0$, which is equivalent to the three character equalities $\chi_1=\chi_0$, $\psi_2=\chi_2^{-1}$ and $\psi_1=\chi_2^{1-d}$. In particular, we then necessarily have
\[
z_1=\ord(\chi_1)=1
\]
and
\[
z_2=\ord(\chi_2)=\ord(\psi_2)\mid d
\]
as well as
\[
\ord(\psi_1)=\ord(\chi_2^{1-d})=\frac{z_2}{\gcd(z_2,d-1)}=z_2.
\]
Conversely, if $z$ is a divisor of $d$ and $\psi_1$ and $\psi_2$ are fixed multiplicative characters of $\IF_q$ of order $z$, then there are unique choices for the multiplicative characters $\chi_1$ and $\chi_2$ such that $\ord(\chi_2)=z$ and each character product $\nu_i=\nu_i(\chi_1,\chi_2,\psi_1,\psi_2)$ is principal.

Therefore, we see that the sum of the summands in formula (\ref{nRewrittenEq}) in which $S(\vec{P},\vec{\nu})$ is large equals
\begin{align*}
N_{\mathrm{large}}=&\frac{\phi(q-1)\phi(\frac{q-1}{d})}{(q-1)^2d^2}\sum_{z\mid d}{\frac{\mu(1)^2}{\phi(1)^2}\sum_{\psi_1,\psi_2:\ord(\psi_i)=z}{\frac{1}{(\psi_1\psi_2)(\omega)}}\left(q-|\Null(T(T^o+1))|\right)} \\
&=\frac{\phi(q-1)\phi(\frac{q-1}{d})}{(q-1)^2d^2}\left(q-|\Null(T(T^o+1))|\right)\sum_{z\mid d}{\left(\sum_{\ord(\psi)=z}{\psi(\omega)^{-1}}\right)^2} \\
&>\frac{\phi(q-1)\phi(\frac{q-1}{d})}{(q-1)^2d^2}\left(q-q^{\epsilon}\right)\sum_{z\mid d}{\mu(z)^2} \\
&=\frac{2\phi(q-1)\phi(\frac{q-1}{d})}{(q-1)^2d^2}\left(q-q^{\epsilon}\right),
\end{align*}
where the last equality uses our assumption that $d$ is a prime. Using that for each $\delta>0$, the Euler totient function $\phi$ grows faster than $n\mapsto n^{1-\delta}$, we conclude that if $q$ is large enough (relative to $d$), then
\[
N_{\mathrm{large}}\geq\frac{2(q-1)^{1-\delta}\left(\frac{q-1}{d}\right)^{1-\delta}}{(q-1)^2d^2}\cdot\left(q-q^{\epsilon}\right)=\frac{2}{d^{3-\delta}}\cdot\frac{q-q^{\epsilon}}{(q-1)^{2\delta}}\geq\frac{2}{d^3}\cdot\frac{1}{2}\cdot\frac{q}{q^{2\delta}}=\frac{1}{d^3}q^{1-2\delta}.
\]
It remains to show that asymptotically, $N_{\mathrm{small}}$ grows more slowly than $N_{\mathrm{large}}$. Using Lemma \ref{davenportPlusLem}(1), the bound $\deg(T(T^o+1))<q^{\epsilon}$ from above, as well as the triangle inequality (and observing that the absolute value of a multiplicative character value is always at most $1$), we find that
\begin{align*}
|N_{\mathrm{small}}|&\leq\frac{1}{d^2}\sum_{z_1,z_2\mid q-1}{\frac{1}{\phi(z_1)\phi(\frac{z_2}{\gcd(z_2,d)})}\sum_{\psi_1,\psi_2:\ord(\psi_i)\mid d}{\frac{1}{1}\sum_{\chi_1,\chi_2:\ord(\chi_i)=z_i}{\left(q^{\epsilon+\frac{1}{2}}\right)}}} \\
&=\frac{q^{\frac{1}{2}+\epsilon}}{d^2}\cdot\sum_{z_1,z_2\mid q-1}{\frac{d^2\phi(z_1)\phi(z_2)}{\phi(z_1)\phi(\frac{z_2}{\gcd(z_2,d)})}}=q^{\frac{1}{2}+\epsilon}\sum_{z_1,z_2\mid q-1}{\frac{\phi(z_2)}{\phi(\frac{z_2}{\gcd(z_2,d)})}} \\
&\leq q^{\frac{1}{2}+\epsilon}\sum_{z_1,z_2\mid q-1}{d}=dq^{\frac{1}{2}+\epsilon}\tau(q-1)^2.
\end{align*}
The number of divisors function $\tau$ grows more slowly than $n\mapsto n^{\delta}$. Therefore, if $q$ is large enough relative to $d$, we have
\[
|N_{\mathrm{small}}|\leq dq^{\frac{1}{2}+\epsilon}q^{2\delta}=dq^{\frac{1}{2}+\epsilon+2\delta}.
\]
With $\epsilon:=\frac{1}{8}$ and $\delta:=\frac{1}{16}$, this indeed grows more slowly than the lower bound $\frac{1}{d^3}q^{1-2\delta}$ for $N_{\mathrm{large}}$. Since $N_{\mathrm{large}}\to\infty$ as $q\to\infty$, this shows in particular that $N=N_{\mathrm{large}}+N_{\mathrm{small}}\geq 1$ if $q$ is large enough (relative to $d$), as we needed to show.

\section{Concluding remarks}\label{sec7}

In this section, we discuss some open questions relating to our Theorem \ref{mainTheo3}. The first is the following:

\begin{question}\label{openQues1}
Is it true that all large enough finite fields $\IF_q$ admit a non-additive complete mapping $f$ with $f(0)=0$ such that both $f$ and $f+\id$ permute $\IF_q^{\ast}$ cyclically?
\end{question}

Note that Theorem \ref{mainTheo3} states that this is true as long as we assume $d\mid q-1$ for some fixed integer $d>1$, i.e., as long as $q-1$ has some small nontrivial divisor $d$. However, this is not the case for every $q$ (think of $q-1$ being a Mersenne prime). For prime powers $q$ where $q-1$ has only large prime divisors, our method of using cyclotomic mappings breaks down, and one would need a different method for constructing $f$.

Both other open questions pertain to other combinations of cycle structures of $f$ and $f+\id$. For example, what about other cycle structures of index $d$ first-order cyclotomic permutations with only long cycles on $\IF_q^{\ast}$:

\begin{question}\label{openQues2}
Let $d$ be a positive integer, and let $\epsilon>0$. Is it true that there is a constant $C_3=C_3(d,\epsilon)$ such that if
\begin{itemize}
\item $q$ is a prime power with $q\geq C_3(d,\epsilon)$ and
\item $\gamma_1,\gamma_2$ are cycle types of index $d$ first-order cyclotomic permutations of $\IF_q$ all of whose cycles on $\IF_q^{\ast}$ are of length at least $\epsilon q$,
\end{itemize}
then there is an index $d$ first-order cyclotomic permutation $f$ of $\IF_q$ such that $f$ is a complete mapping, $\CT(f)=\gamma_1$ and $\CT(f+\id)=\gamma_2$?
\end{question}

Of course, further generalizations of Question \ref{openQues2} are also imaginable, such as asking for control over the cycle types of $f$ and $f+c\id$ for some $c\in\IF_q^{\ast}$, or perhaps even over the cycle types of $f+c_1\id,f+c_2\id,\ldots,f+c_r\id$ where $c_1,\ldots,c_r\in\IF_q$ are pairwise distinct.

Finally, we would like to discuss a variant of the situation in Theorem \ref{mainTheo3} already hinted at at the end of Subsection \ref{subsec1P1}. For the purposes of the remaining discussion, let us introduce the following non-standard terminology:

\begin{definition}\label{niceDef}
A complete mapping $f$ of a finite field $\IF_q$ is called \emph{special} if $f$ permutes the elements of $\IF_q$ cyclically, and $f+\id$ permutes $q-1$ elements of $\IF_q$ cyclically (and has one fixed point).
\end{definition}

We note that special complete mappings $f$ are the most extreme as far as simultaneous long cycles of $f$ and $f+\id$ are concerned. Indeed, it is not possible to have a complete mapping $f$ of $\IF_q$ such that both $f$ and $f+\id$ consist of just one $q$-cycle each. This is because $f+\id$ is a so-called \emph{orthomorphism} of $\IF_q$ (a permutation $g$ of $\IF_q$ such that $g-\id$ is also a permutation of $\IF_q$), and it is known (see e.g.~\cite[Lemma 4]{HK09a}) that every orthomorphism of $\IF_q$ has precisely one fixed point.

For the same reason, $\IF_q$ cannot have a special complete mapping if $q$ is even: In characteristic $2$, complete mappings and orthomorphisms are the same, and so if $f$ is a complete mapping of a finite field of characteristic $2$, then both $f$ and $f+\id$ have precisely one fixed point.

On the other hand, there are examples of odd-characteristic finite fields with a special complete mapping. The following is the only systematic (and possibly infinite, see below) family of examples known to the authors:

\begin{proposition}\label{niceProp}
Let $p$ be a prime such that $2$ is a primitive root modulo $p$. Then for every $b\in\IF_p^{\ast}$, the function
\[
f:\IF_p\rightarrow\IF_p,x\mapsto x+b,
\]
is a special complete mapping of $\IF_p$.
\end{proposition}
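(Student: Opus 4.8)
The plan is to verify the three defining properties of a special complete mapping (Definition \ref{niceDef}) in turn, the crucial observation being that after an appropriate change of coordinates the orthomorphism $f+\id$ becomes the multiplication-by-$2$ map, whose cycle structure is governed by the multiplicative order of $2$.

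First I would record that $p$ must be odd, since $2$ can only be a primitive root modulo an odd prime. The map $f:x\mapsto x+b$ is plainly a permutation of $\IF_p$ (translation by $b$), and $(f+\id)(x)=2x+b$ is an affine map with invertible leading coefficient $2$ (using $p$ odd), hence also a permutation; thus $f$ is a complete mapping. Moreover, iterating $f$ from $0$ produces the sequence $0,b,2b,3b,\dots$, and since $b\neq 0$ generates the additive group $(\IZ/p\IZ,+)$, this orbit is all of $\IF_p$, so $f$ is a single $p$-cycle.

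It remains to analyze $g:=f+\id$, $x\mapsto 2x+b$. I would first solve $2x+b=x$ to find the fixed point $x=-b$; its uniqueness is automatic since $g$ is an orthomorphism (cf.\ the discussion preceding the proposition), but here it is also immediate from linearity. The key step is to conjugate $g$ by the translation $x\mapsto y:=x+b$: a direct computation shows that in the $y$-coordinate the map becomes $y\mapsto 2y$, with the fixed point corresponding to $y=0$. Thus on the $p-1$ nonzero values of $y$ --- equivalently, on $\IF_p\setminus\{-b\}$ --- the permutation $g$ is conjugate to multiplication by $2$ on $\IF_p^{\ast}$. Since $2$ is a primitive root modulo $p$, one has $\ord(2)=p-1$ in $\IF_p^{\ast}$, so multiplication by $2$ consists of a single $(p-1)$-cycle. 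Hence $g$ has exactly one fixed point and otherwise a single $(p-1)$-cycle, which is precisely the requirement for $f$ to be special.

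There is no genuine obstacle here: the only idea needed is the conjugation reducing the affine map $g$ to the linear map $y\mapsto 2y$, after which everything follows from the hypothesis on the multiplicative order of $2$. The remaining verifications (that $f$ is a $p$-cycle and that $g$ is an affine bijection) are routine.
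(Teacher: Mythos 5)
Your proof is correct, and its core mechanism is genuinely different from the paper's. The paper computes the iterates $(f+\id)^{\ell}(x)=2^{\ell}x+(2^{\ell}-1)b$ and argues by exclusion: for $\ell\in\{2,\ldots,p-2\}$ the primitive-root hypothesis makes $2^{\ell}-1$ a unit, so $(f+\id)^{\ell}(x)=x$ has only the solution $x=-b$, whence $f+\id$ has no cycles of length $2,\ldots,p-2$, and a final counting step forces the remaining $p-1$ points to form a single $(p-1)$-cycle. You instead conjugate $g=f+\id$ by the translation $y=x+b$ to obtain the linear map $y\mapsto 2y$ and use invariance of cycle type under conjugation: multiplication by $2$ on $\IF_p^{\ast}$ decomposes into $(p-1)/\ord(2)$ cycles of length $\ord(2)=p-1$, i.e.\ a single $(p-1)$-cycle, together with the fixed point $y=0$. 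The two arguments rest on the same identity, since your conjugation is precisely the statement $g^{\ell}(x)=2^{\ell}(x+b)-b$, but yours reads off the cycle structure directly rather than by exclusion, and it generalizes at no extra cost: any affine map $x\mapsto ax+b$ with $a\neq1$ is conjugate to $x\mapsto ax$ and hence has one fixed point plus $(p-1)/\ord(a)$ cycles of length $\ord(a)$, recovering the proposition as the special case $\ord(2)=p-1$. What the paper's route buys is that it stays entirely inside the fixed-point-counting framework (solving $g^{\ell}(x)=x$), needing no appeal to conjugacy invariance; what yours buys is a cleaner structural explanation and the full cycle type of all affine permutations, not just those satisfying the primitive-root hypothesis.
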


\begin{proof}
It is clear that $f$, being a nontrivial element of the regular representation of the additive group $\IZ/p\IZ$ on itself, is a $p$-cycle. Moreover, $p$ is necessarily odd by assumption, so $f+\id:x\mapsto 2x+b$ is also a permutation of $\IF_p$. Since $f+\id$ is an orthomorphism of $\IF_p$, it has precisely one fixed point (namely $-b$). Moreover, observe that for each positive integer $\ell$, we have
\[
(f+\id)^{\ell}(x)=2^{\ell}x+b(1+2+2^2+\cdots+2^{\ell-1})=2^{\ell}x+(2^{\ell}-1)b.
\]
By assumption, if $\ell\in\{2,3,\ldots,p-2\}$, then $2^{\ell}\not\equiv1\Mod{p}$, so $2^{\ell}-1$ is nonzero in $\IF_p$ and thus a unit in $\IF_p$. It follows that the equation
\[
2^{\ell}x+(2^{\ell}-1)b=(f+\id)^{\ell}(x)=x
\]
has precisely one solution (the fixed point $-b$ of $f+\id$), so $f$ has no cycles of any length $\ell\in\{2,3,\ldots,p-2\}$. This means that $f+\id$ must consist of its unique fixed point $-b$ and a $(p-1)$-cycle, as required.
\end{proof}

It is not known unconditionally whether there are infinitely many primes that have $2$ as a primitive root. However, if the Generalized Riemann Hypothesis holds, then there are infinitely many such primes; in fact, one then has that the density of such primes is equal to the so-called Artin constant
\[
C_{\mathrm{Artin}}=\prod_{\text{primes }p}{\frac{1}{p(p-1)}}=0.3739\ldots,
\]
see \cite[Theorem in Section 7]{Hoo67a}.

It would be interesting to gain a better understanding of special complete mappings of finite fields. In this spirit, using GAP \cite{GAP4}, the authors counted the precise number of special complete mappings of $\IF_q$ for all odd prime powers $q\leq13$ with a brute force algorithm, and they verified that $\IF_q$ with $q\in\{17,19,23,25\}$ has at least one special complete mapping (using a random search algorithm for $q=17$ and $q=23$). The results are summarized in Table \ref{table1}. For $\IF_{25}$, we denote by $\omega$ any fixed root (in $\IF_{25}$) of the Conway polynomial $T^2-T+2\in\IF_5[T]$.

\begin{table}[h]\centering
\begin{tabular}{|c|c|c|}\hline
$q$ & $N_q$ & exemplary special complete mapping of $\IF_q$ \\ \hline
$3$ & $2$ & $x\mapsto x+1$ \\ \hline
$5$ & $4$ & $x\mapsto x+1$ \\ \hline
$7$ & $36$ & $(0,6,4,1,3,5,2)$ \\ \hline
$9$ & $0$ & there is none \\ \hline
$11$ & $760$ & $x\mapsto x+1$ \\ \hline
$13$ & $22212$ & $x\mapsto x+1$ \\ \hline
$17$ & unknown & $(0,3,9,10,13,5,11,14,12,2,8,16,1,4,7,6,15)$ \\ \hline
$19$ & unknown & $x\mapsto x+1$ \\ \hline
$23$ & unknown & $(0,20,7,14,18,5,6,11,8,2,10,15,9,13,16,21,17,22,19,12,1,4,3)$ \\ \hline
$25$ & unknown & \thead{$(0,4\omega+1,4\omega+4,\omega,\omega+4,\omega+1,4,\omega+3,2\omega+1,3\omega,\omega+2,4\omega+3,1,$ \\ $2,3\omega+1,3\omega+2,3\omega+4,4\omega+2,4\omega,2\omega+4,3\omega+3,3,2\omega,2\omega+3,2\omega+2)$} \\ \hline
\end{tabular}
\caption{Numbers $N_q$ and examples of special complete mappings of small finite fields $\IF_q$.}
\label{table1}
\end{table}

These computational investigations motivate the following open question:

\begin{question}\label{openQues3}
Can the prime powers $q$ such that $\IF_q$ admits a special complete mapping be characterized? Do all large enough odd prime powers have this property?
\end{question}

\end{document}